\documentclass[11pt,reqno]{amsart}

\usepackage{amsmath,amssymb,amsthm,mathtools,mathrsfs}
\usepackage{epsfig}
\usepackage{mathdots}
\usepackage{hyperref}
\hypersetup{hidelinks}
\usepackage{fullpage}
\usepackage{color}
\usepackage{enumitem}
\usepackage{array,longtable}

\newcolumntype{C}{>{$}c<{$}}

\theoremstyle{plain}
\newtheorem{theorem}{Theorem}[section]
\newtheorem{corollary}{Corollary}[section]

\newtheorem{lemma}{Lemma}[section]
\newtheorem{proposition}{Proposition}[section]

\theoremstyle{definition}
\newtheorem{definition}{Definition}[section]
\newtheorem{remark}{Remark}[section]

\numberwithin{equation}{section}

\def\Z{\mathbb{Z}}
\def\Q{\mathbb{Q}}
\def\R{\mathbb{R}}
\def\C{\mathbb{C}}
\def\F{\mathbb{F}}

\newcommand{\OO}{\mathcal{O}}
\newcommand{\End}{\operatorname{End}}
\newcommand{\Aut}{\operatorname{Aut}}
\newcommand{\Spec}{\operatorname{Spec}}
\newcommand{\Trd}{\operatorname{Trd}}
\newcommand{\Nrd}{\operatorname{Nrd}}
\newcommand{\tr}{\operatorname{tr}}
\newcommand{\ord}{\operatorname{ord}}
\newcommand{\Norm}{\operatorname{Norm}}
\newcommand{\Res}{\operatorname{Res}}
\newcommand{\length}{\operatorname{length}}
\newcommand{\Diff}{\operatorname{Diff}}
\newcommand{\eps}{\varepsilon}
\newcommand{\Aideal}{\mathfrak A}
\newcommand{\qideal}{\mathfrak q}
\newcommand{\lideal}{\mathfrak l}

\newcommand{\mdeg}{\operatorname{deg}}

\begin{document}

\title{Difference of the modular function $\omega_{2}(\tau)$, revisited}

\author{Wei-Lun Tsai and Dongxi Ye}

\address{Department of Mathematics, University of South Carolina,
1523 Greene St, LeConte College, Room 450,
Columbia, SC 29208}

\email{weilun@mailbox.sc.edu}

\address{Beijing Normal--Hong Kong Baptist University,
Zhuhai 519082, Guangdong, People's Republic of China}

\email{dongxiye@bnbu.edu.cn}

\subjclass[2020]{Primary 11G15; Secondary 11F03, 14G35}

\keywords{Arithmetic intersections, Singular moduli, 
Special cycles, Weber function}

\thanks{Dongxi Ye was supported by the Guangdong Basic and Applied Basic
Research Foundation (Grant No.~2024A1515030222) and the BNBU Start-up
Research Fund (Grant No.~R0700157-26).}

\begin{abstract}
Adapting the analytic method of Gross and Zagier, Roskam 
proved a prime-factorization formula
for the norm of the difference of two level-two Weber singular moduli.
Independently, Yang and Yin obtained an equivalent formula using
Borcherds lifts. 
More precisely, the formula
concerns the norm of
\[
 \omega_{2}\left(\frac{-1+\sqrt{d_{1}}}{2}\right)
 -
 \omega_{2}\left(\frac{-1+\sqrt{d_{2}}}{2}\right)
\]
for coprime negative fundamental quadratic discriminants
$d_{1},d_{2}\equiv1\pmod 8$, where
\[
 \omega_{2}(\tau)
 =
 2^{12}\frac{\eta(2\tau)^{24}}{\eta(\tau)^{24}},
\]
and $\eta(\tau)$ denotes the Dedekind eta function. In this work, we revisit this formula from the perspective of arithmetic intersection
theory and give a new proof. 
\end{abstract}

\maketitle

\allowdisplaybreaks

\section{Introduction}

Let $d_1,d_2<0$ be distinct, coprime fundamental discriminants
satisfying
\begin{equation}
\label{eq:hypotheses}
 d_1\equiv d_2\equiv1\pmod 8,
 \qquad
 D=d_1d_2>0.
\end{equation}
Set
\[
 K_i=\Q(\sqrt{d_i}),
 \qquad
 \OO_i=\Z\left[\frac{-1+\sqrt{d_i}}{2}\right],
\]
and define
\begin{equation*}
 \omega_2(\tau)
 =
 2^{12}\frac{\eta(2\tau)^{24}}{\eta(\tau)^{24}},
 \qquad
 \beta_i
 =
 \omega_2\left(\frac{-1+\sqrt{d_i}}{2}\right).
\end{equation*}
Here $\eta(\tau)$ denotes the Dedekind eta function.

Using different analytic methods, Roskam \cite[Theorem~5]{Roskam} and Yang and
Yin \cite[Theorem~1.3]{YY} independently established a prime-factorization formula
for the rational norm
$
 \left|
 \Norm_{\Q(\beta_1,\beta_2)/\Q}(\beta_1-\beta_2)
 \right|.
$
Their work was inspired by the formula of Gross and Zagier \cite{GZ}
for the norm of the difference of two singular values of the modular
$j$-invariant.

To state the formula, we first introduce the arithmetic functions that occur in the
formula. For a rational prime $\ell$ satisfying
\(
 \left(\frac{D}{\ell}\right)\ne-1,
\)
choose an index $i\in\{1,2\}$ such that $\ell\nmid d_i$ and define
\begin{equation}
\label{eq:epsilon-prime}
 \eps(\ell)
 =
 \left(\frac{d_i}{\ell}\right).
\end{equation}
Here and below, $\left(\frac{\cdot}{\cdot}\right)$ denotes the
Kronecker--Jacobi symbol. Such an index $i$ exists, since
$(d_1,d_2)=1$, and the definition in \eqref{eq:epsilon-prime} is
independent of the choice of $i$.

Extend $\eps$ completely multiplicatively to positive integers all of
whose prime divisors $\ell$ satisfy
\(
 \left(\frac{D}{\ell}\right)\ne-1.
\)
We call such integers admissible. For an admissible positive integer
$n$, define
\begin{equation}
\label{eq:rhoepsilon}
 \rho(n)=\sum_{a\mid n}\eps(a).
\end{equation}
We also set $\rho(n)=0$ whenever $n$ is not a positive admissible
integer.

Furthermore, define
\begin{equation}
 X_D
 =
 \left\{
 x\in\Z:
 0<x<\sqrt D,\quad x^2\equiv D\pmod{16}
 \right\},
 \quad\mbox{and}\quad
 m_x=\frac{D-x^2}{16}
 \,\,\,\mbox{for\, $x\in X_D$}.\nonumber
\end{equation}
It is immediate from the definition that $m_x$ is admissible. For an
admissible positive integer $m$, put
\begin{equation}
 F(m)
 =
 \prod_{\substack{a,b>0\\ab=m}}a^{\eps(b)}.\nonumber
\end{equation}
For any admissible $m$ and any prime $p$, we have
\begin{equation}
 \ord_p F(m)
 =
 \sum_{r\ge1}\rho(m/p^r).\nonumber
\end{equation}
 Indeed, one has
\[
\begin{aligned}
 \ord_p F(m)
 &=\sum_{ab=m}\eps(b)\ord_p(a) =\sum_{r\ge1}
   \sum_{\substack{ab=m\\p^r\mid a}}\eps(b) =\sum_{r\ge1}\sum_{b\mid m/p^r}\eps(b) =\sum_{r\ge1}\rho(m/p^r).
\end{aligned}
\]

We can now state the prime-factorization formula.

\begin{theorem}[Roskam, Yang--Yin]
\label{thm:norm-formula}
Under \eqref{eq:hypotheses}, one has 
\begin{equation}
\label{eq:main-norm-formula}
 \left|
 \Norm_{\Q(\beta_1,\beta_2)/\Q}(\beta_1-\beta_2)
 \right|
 =
 \prod_{x\in X_D}F(m_x).
\end{equation}
Equivalently, for any prime $p$,
\begin{equation}
\label{porderformula}
 \ord_p
 \left|
 \Norm_{\Q(\beta_1,\beta_2)/\Q}(\beta_1-\beta_2)
 \right|
 =
 \sum_{x\in X_D}\sum_{r\ge1}\rho(m_x/p^r).    
\end{equation}

\end{theorem}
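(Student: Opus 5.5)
We follow the arithmetic-intersection approach of Gross--Zagier as reinterpreted by Lauter--Viray and Howard--Yang. The guiding idea is that $\omega_2$ is a Hauptmodul for $\Gamma_0(2)$. Write $\mathcal X_0(2)$ for the Deligne--Rapoport model over $\Z$ of the modular curve $X_0(2)$. Since $d_i\equiv1\pmod 8$, the prime $2$ splits in $K_i$. The point $\tau_i=(-1+\sqrt{d_i})/2$ corresponds to a pair $(E_i, C_i)$, where $E_i$ has CM by $\OO_i$ and the cyclic $2$-subgroup $C_i$ is the kernel of one of the two ideals above $2$. Such pairs form a CM divisor $\mathcal{CM}_i$ on $\mathcal X_0(2)$, finite étale over $\Spec \OO_{H_i}$.

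\textbf{Step 1: reduce to a local intersection count.} We check that $\omega_2$ extends to a function on $\mathcal X_0(2)$ whose divisor is supported on the cusps, with no vertical components away from $2$. Because the CM points reduce into the ordinary locus, this also holds near $2$, where the $2^{12}$ normalization makes $\omega_2$ integral and a unit on the relevant component. It follows that
\[
\ord_p\bigl|\Norm(\beta_1-\beta_2)\bigr| \;=\; \frac{1}{?}\sum_{\mathfrak p\mid p}\bigl(\mathcal{CM}_1\cdot\mathcal{CM}_2\bigr)_{\mathfrak p}\log\text{-free}.
\]
In words, the left side is the degree over $\F_p$ of $\mathcal{CM}_1\times_{\mathcal X_0(2)}\mathcal{CM}_2$, up to the normalization by Galois orbits. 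Since $\beta_i$ is the value of a Hauptmodul, the local multiplicity is the length $\length\,\OO_{\text{int},x}$ of the intersection. The Galois orbit of $\beta_1-\beta_2$ is exactly the set of pairs of CM points, one from each $\mathcal{CM}_i$.

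\textbf{Step 2: deformation theory.} A geometric point of the intersection over $\bar\F_p$ is a triple $(E,C)$ with two actions $\OO_1,\OO_2\hookrightarrow\End(E)$ preserving $C$. As in Gross--Zagier, $d_1\neq d_2$ forces $E$ to be supersingular. The lattice $L=\Z\langle\OO_1,\OO_2\rangle$ lies in the maximal order $R=\End(E)$ of $B_{p,\infty}$ and satisfies the additional level-$2$ condition that it stabilize $C$. This lands in an Eichler order of level $2$. Using Gross--Keating or Lauter--Viray, the length at such a point is $\tfrac12\sum_{r\ge1}\#\{\text{embeddings modulo }1+p^{r-1}\ldots\}$. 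It reduces to counting elements $\delta=[\sqrt{d_1},\sqrt{d_2}]$-type with $\Nrd$ determined by $x$.

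The key parametrization is as follows. Writing the product $\alpha_1\alpha_2$ of generators, its trace $x$ satisfies $x^2\equiv D$. The level-$2$ structure forces the congruence to hold modulo $16$ rather than modulo $4$, which is why $X_D$ and $m_x=(D-x^2)/16$ appear. Then $m_x\,p^{-r}$ governs existence in the order modulo $p^r$, and the count of such optimal embeddings into level-$2$ Eichler orders of discriminant $p$ is the local representation number. Globally, via the Gross--Zagier genus argument, this equals $\rho(m_x/p^r)$.

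\textbf{Step 3: assemble the count.} Summing over $x\in X_D$ with $x>0$ (the pairs $\pm x$ account for the factor $\tfrac12$), and over $r\ge1$, gives \eqref{porderformula}. The product formula \eqref{eq:main-norm-formula} then follows from the computation of $\ord_pF(m)$ given above.

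The main obstacle is Step 2 at $p$ dividing the level-$2$ structure and the precise counting of embeddings into the level-$2$ order. Two points need care. First, we must show that the $\Gamma_0(2)$ condition translates exactly into $x^2\equiv D\pmod{16}$ rather than modulo $4$. We would verify this by writing $\OO_1\OO_2$-stable lattices explicitly, using the fact that $2$ splits in both fields with prescribed ideals. Second, we must identify the number of supersingular points with the given embedding with $\rho$, matching local factors prime by prime through the genus character $\eps$. For $p=2$ itself, we expect the CM points to be ordinary, so the contribution comes only from primes with $(D/p)\ne1$. This is consistent with $\rho(m_x/2^r)$ vanishing or being handled by admissibility, and it needs a separate check.
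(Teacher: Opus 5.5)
Your proposal breaks at the moduli description of $\beta_i$, which is exactly where the level-two structure enters.

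\textbf{The CM divisor is misidentified.} The point $\tau_i=(-1+\sqrt{d_i})/2$ gives the pair $(\C/(\Z\tau_i+\Z),\langle\tfrac12\rangle)$. Here $\OO_i=\Z[\tau_i]$, but $\tau_i\cdot\tfrac12\notin\{0,\tfrac12\}+\Z\tau_i+\Z$, so the subgroup $C_i$ is \emph{not} stable under $\OO_i$. Its stabilizer is $\Z+2\OO_i$, and $E_i/C_i$ has CM by the order of conductor $2$. The kernels $E[\mathfrak p]$ and $E[\overline{\mathfrak p}]$ of the ideals above $2$ are the other two subgroups of order $2$, coming from forms $[a,b,c]$ with $a$ even, and they give different $\omega_2$-values. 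At a $2$-adic place above $\mathfrak p$, the subgroup $E[\mathfrak p]$ is the connected canonical subgroup, where $\omega_2$ has valuation $12$. So your Step~1 claim that the CM values are $2$-adic units is false for the divisor you chose.

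\textbf{Consequently Step~2 imposes the wrong local condition.} The true intersection points have $C$ nonstable under \emph{both} maximal orders, and this is precisely what produces the modulus $16$. On $T_2E$, each $A_k=(1+s_k)/2$ reduces to a rank-one idempotent whose kernel and image are the two lines other than $C$. Hence $A_2\equiv A_1$ or $A_2\equiv 1-A_1\pmod 2$. Then $s_1s_2-x=2\bigl(2A_1A_2-A_1-A_2+(1-t)\bigr)$, with $t=\tr(A_1A_2)$, is divisible by $4$. So $u=(s_1s_2-x)/4$ is an integral antilinear endomorphism with $\Nrd(u)=(D-x^2)/16$.

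Under your hypothesis (both orders stabilize $C$, i.e.\ lie in a level-$2$ Eichler order) this fails. Work in an eigenbasis with $s_1=\mathrm{diag}(\sigma_1,-\sigma_1)$, and take $s_2=\begin{pmatrix}p&q\\ r&-p\end{pmatrix}$ with $p^2+qr=d_2$, $\ord_2q=1$, $\ord_2r=2$. Then $(1+s_2)/2$ is integral and stabilizes $\langle e_1\rangle$ modulo $2$, yet $D-x^2=d_1qr$ has $2$-adic valuation $3$. So your planned check of $x^2\equiv D\pmod{16}$ cannot succeed. Your setup computes a Heegner-type intersection of different divisors, not $\Norm(\beta_1-\beta_2)$.

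\textbf{Even with the correct divisor, the counting step is only asserted.} The normalization in Step~1 is left as a literal ``$1/?$''. It is resolved by noting two things: every tuple with fixed $\OO_i$-orientation has automorphism group $\OO_i^\times=\{\pm1\}$, and the signed invariants $\pm x$ contribute equally, which cancels the $\tfrac12$.

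More importantly, ``via the Gross--Zagier genus argument, this equals $\rho(m_x/p^r)$'' hides two ingredients you never supply.
\begin{enumerate}[label=(\arabic*)]
\item The identity $\eps(m_x)=-1$. It comes from evaluating the character of the unramified extension $\Q(\sqrt{d_1},\sqrt{d_2})/\Q(\sqrt D)$ on $(x+\sqrt D)/2$, whose real conjugates have opposite signs. This makes $\#\Diff(m_x)$ odd and gives $\eps(n_x)=1$ when $m_x=p^en_x$.
\item An actual count of supersingular configurations. In the paper this is $\rho(n_x)$ at every lifting level $n\le(e+1)/2$: a Kudla--Yang Eisenstein coefficient gives the point count, and Gross's quasi-canonical liftings give the local ring $W/(p^{(e+1)/2})$.
\end{enumerate}
Your suggestion that $m_xp^{-r}$ governs lifting modulo $p^r$ is not right level by level. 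Since $\rho(m_x/p^n)=0$ for even $n$, the level-$n$ contribution is $\rho(n_x)$, not $\rho(m_x/p^n)$. Only the totals agree: $\tfrac{e+1}{2}\rho(n_x)=\sum_r\rho(m_x/p^r)$.

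\textbf{The case $p=2$ is left open.} Your ``main obstacle at $p$ dividing the level'' is misplaced, since there are no supersingular intersections at $2$. The paper shows the $\beta_i$ are $2$-adic units with distinct reductions: $\bar j=\bar\beta^{\,2}$, and an ordinary curve cannot have CM by two fields. It also shows $\rho(m_x/2^r)=0$, because $\eps(2)=1$ while $\eps(m_x)=-1$.
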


As noted above, formula \eqref{eq:main-norm-formula} was proved
independently by Roskam and by Yang and Yin using different analytic
methods. Roskam's argument adapts the analytic method of Gross and
Zagier for the prime factorization of singular values of the modular
$j$-invariant \cite{GZ}.

Gross and Zagier also gave a conceptual algebraic proof of their
formula. This raises the natural question of whether an analogous
algebraic proof exists for \eqref{eq:main-norm-formula}. The level-two
case is more subtle. Their algebraic argument uses a correspondence
involving CM points, optimal embeddings into maximal quaternion orders,
and local intersections governed by a single special endomorphism. In
the present setting, the level structure introduces a nonmaximal order,
a subgroup that is not stable under the full CM order, and additional
deformation data. Thus, the argument does not apply directly, although
its analytic counterpart can be adapted, as in Roskam's work.

Our method overcomes this difficulty by computing the relevant
arithmetic intersections directly. The proof has two normalization
issues that must be kept separate. First, the moduli problem is
naturally a stack, so objects are counted with automorphism weights.
Second, after one CM type is fixed, the second CM action has two
possible orientations, corresponding to the signed invariants $x$ and
$-x$. These two contributions cancel the automorphism factor. The
remaining multiplicity is the deformation length, which we express as
a sum over finite lifting levels.

\section{The genus character and the divisor sum}
\label{sec:genus-sign}

In this section, we establish basic properties of the arithmetic
functions $\eps(\cdot)$ and $\rho(\cdot)$ as defined in  \eqref{eq:epsilon-prime} and \eqref{eq:rhoepsilon}. We write 
\(
 F=\Q(\sqrt D)
\)
and
\(
 E=\Q(\sqrt{d_1},\sqrt{d_2}).
\)
Then $E/F$ is a quadratic CM extension. Let
\(
 \chi=\chi_{E/F}
\)
be the associated quadratic character. The following lemma is classical.

\begin{lemma}
\label{lem:genusfield}
The extension $E/F$ is unramified at every finite prime and
ramified at both real places. If $a\in F^\times$, then
\begin{equation}
\label{eq:principal-character}
 \chi(a\OO_{F})
 =
 \operatorname{sgn}(\sigma_1(a))
 \operatorname{sgn}(\sigma_2(a)),
\end{equation}
where
\(
 \sigma_1,\sigma_2:F\hookrightarrow\R
\)
are the two real embeddings.
\end{lemma}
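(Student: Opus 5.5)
Our plan is to prove the ramification statement first by showing $E=F(\sqrt{d_1})$ with $d_1\equiv1\pmod 4$, and then to read off the character formula from class field theory.

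\textbf{Step 1: odd primes.} Since $d_1\equiv1\pmod 4$, the extension $F(\sqrt{d_1})/F$ can only ramify at primes dividing $d_1$ and possibly at primes above $2$. The same holds with $d_2$ in place of $d_1$. Because $E=F(\sqrt{d_1})=F(\sqrt{d_2})$ (note $\sqrt{d_2}=\sqrt{D}/\sqrt{d_1}$) and $(d_1,d_2)=1$, every odd prime $\mathfrak p$ of $F$ fails to divide at least one of $d_1,d_2$. Hence $\mathfrak p$ is unramified in $E$.

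\textbf{Step 2: primes above $2$.} Here we use $\theta=(1+\sqrt{d_1})/2$. It is integral over $\OO_F$, with minimal polynomial $x^2-x+(1-d_1)/4$ whose discriminant is $d_1$, a unit at $2$. Therefore the relative discriminant of $E/F$ is prime to $2$, and $E/F$ is unramified at all finite primes.

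\textbf{Step 3: real places.} Both $d_i<0$, so $\sqrt{d_1}$ is not real under either real embedding of $F$. Hence both real places of $F$ ramify.

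\textbf{Step 4: the character formula.} Regard $\chi$ as a character on ideals of $F$. Since the conductor of $E/F$ is $\infty_1\infty_2$, Artin reciprocity says $\chi$ factors through the narrow class group. Concretely, $\chi$ corresponds to the idele class character $\prod_v\chi_v$, which is trivial on $F^\times$. For a principal ideal $a\OO_F$ we have $\chi(a\OO_F)=\prod_{v\nmid\infty}\chi_v(a)$, because every finite $\chi_v$ is unramified and so $\chi_v(a)=\chi_v(\pi_v)^{\ord_v a}$. Triviality on $F^\times$ then gives
\[
\chi(a\OO_F)=\prod_{v\mid\infty}\chi_v(a)^{-1}=\operatorname{sgn}(\sigma_1(a))\operatorname{sgn}(\sigma_2(a)),
\]
since at a ramified real place the local character is the sign.

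\textbf{Main obstacle.} The only delicate point is the prime $2$, and the congruence $d_1\equiv1\pmod 4$ handles it through the integrality of $\theta$. The rest is standard, and the sign conventions in the idelic step are harmless because $\chi$ is quadratic.
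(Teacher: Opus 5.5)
Your proof is correct. The paper gives no argument of its own for this lemma; it only refers to Gras. Your proposal therefore supplies the standard argument the reference stands in for: finite ramification is controlled by the relative discriminant of $\OO_F[(1+\sqrt{d_k})/2]$, the real places ramify because $d_1<0$, and the sign formula follows from triviality of $\prod_v\chi_v$ on $F^\times$ together with unramifiedness of every finite component.

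Two small remarks. First, your Steps 1 and 2 are really one observation. For each $k$, the order $\OO_F[(1+\sqrt{d_k})/2]\subset\OO_E$ has relative discriminant $d_k\OO_F$. So the relative discriminant of $E/F$ divides $d_1\OO_F+d_2\OO_F=\OO_F$, and once $d_k\equiv1\pmod 4$ the prime $2$ needs no separate treatment. Second, you should state explicitly that $[E:F]=2$, i.e.\ $d_1\notin F^{\times 2}$. This holds because $d_1,d_2$ are distinct coprime fundamental discriminants, so $E$ is biquadratic over $\Q$.
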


\begin{proof}
See, e.g., \cite[VI.2]{Gras}.
\end{proof}

Fix any $x\in X_D$ and write
\(
 t_x=\frac{x+\sqrt D}{2}.
\)
Since $D\equiv1\pmod8$, the prime $2$ splits in $F$. Since the trace
of $t_x$ is odd, exactly one prime above $2$ divides $t_x$. Denote this
prime by $\qideal_x$ and define
\begin{equation}
 \Aideal_x=t_x\OO_{F}\qideal_x^{-2}.\nonumber
\end{equation}

\begin{lemma}
\label{lem:prime-dict}
The ideal $\Aideal_x$ is integral and
\begin{equation}
\label{eq:norm-Aideal}
 \Norm_{F/\Q}(\Aideal_x)=m_x.
\end{equation}
For any rational prime $\ell\mid m_x$, there is a unique prime
$\lideal_{x,\ell}$ of $F$ above $\ell$ that divides $\Aideal_x$, and
\begin{equation}
\label{eq:prime-dictionary}
 \Norm(\lideal_{x,\ell})=\ell,
 \qquad
 \ord_{\lideal_{x,\ell}}(\Aideal_x)
 =
 \ord_\ell(m_x),
 \qquad
 \chi(\lideal_{x,\ell})=\eps(\ell).
\end{equation}
\end{lemma}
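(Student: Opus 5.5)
The plan is to work directly with the element $t_x=\frac{x+\sqrt D}{2}$ and its norm. First I check that $t_x\in\OO_F$. Since $x^2\equiv D\equiv 1\pmod{16}$, $x$ is odd, so $t_x=\frac{x-1}{2}+\frac{1+\sqrt D}{2}\in\Z\left[\frac{1+\sqrt D}{2}\right]=\OO_F$. Its norm is $\Norm_{F/\Q}(t_x)=\frac{x^2-D}{4}=-4m_x$.

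Write $2\OO_F=\qideal\qideal'$. As the paper notes, $t_x$ is not divisible by both primes: otherwise $2\mid t_x$ and the trace $x$ would be even. Hence the whole $2$-part of the norm sits at $\qideal_x$, giving
\[
\ord_{\qideal_x}(t_x)=\ord_2(4m_x)\ge 2 .
\]
So $\Aideal_x=t_x\OO_F\qideal_x^{-2}$ is integral, and $\Norm(\Aideal_x)=|\Norm(t_x)|/4=m_x$, which proves \eqref{eq:norm-Aideal}.

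For the prime dictionary, the key observation is that $t_x\notin\ell\OO_F$ for every rational prime $\ell$, because its coefficient on the basis element $\frac{1+\sqrt D}{2}$ is $1$. Now let $\ell\mid m_x$ and split into cases by the behaviour of $\ell$ in $F$.
\begin{itemize}
\item If $\ell$ is inert, then $\ell\OO_F$ dividing $\Aideal_x$ (hence $t_x$) is impossible. So this case does not occur, which recovers the admissibility of $m_x$.
\item If $\ell$ is ramified, there is a unique prime above $\ell$, and it has norm $\ell$.
\item If $\ell$ is split and odd, $\ell\OO_F=\lideal\lideal'$. Both primes cannot divide $\Aideal_x$, since otherwise $\ell\mid t_x$; so exactly one does, and it has norm $\ell$.
\item If $\ell=2$, the $2$-part of $\Aideal_x$ is a power of $\qideal_x$ alone, so $\lideal_{x,2}=\qideal_x$.
\end{itemize}
In every case the only prime above $\ell$ dividing $\Aideal_x$ has residue degree one. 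Taking $\ord_\ell$ of \eqref{eq:norm-Aideal} then gives $\ord_{\lideal_{x,\ell}}(\Aideal_x)=\ord_\ell(m_x)$.

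The last step is the character identity $\chi(\lideal_{x,\ell})=\eps(\ell)$, which I expect to need the most care. By Lemma~\ref{lem:genusfield}, $E/F$ is unramified at $\lideal=\lideal_{x,\ell}$, so $\chi(\lideal)=\pm1$ according to whether $\lideal$ splits or is inert in $E$. Choose $i$ with $\ell\nmid d_i$; then $E=F(\sqrt{d_i})$. Since $\Norm\lideal=\ell$, the completion $F_\lideal$ is unramified over $\Q_\ell$ with residue field $\F_\ell$, in fact $F_\lideal=\Q_\ell$ when $\ell$ splits. Thus $\lideal$ splits in $E$ iff $d_i$ is a square in $F_\lideal$. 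For odd $\ell$, $d_i$ is an $\ell$-adic unit, so by Hensel's lemma this holds iff $d_i$ is a square in $\F_\ell$, i.e. iff $\left(\frac{d_i}{\ell}\right)=1$. For $\ell=2$, $d_i\equiv1\pmod 8$ is a square in $\Q_2=F_{\qideal_x}$, matching $\left(\frac{d_i}{2}\right)=1$. In the ramified case ($\ell\mid d_j$, $j\ne i$) the same unit-square argument applies over the residue field $\F_\ell$.

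Finally, $\eps(\ell)$ is defined here because the inert case was excluded. Its independence of the choice of $i$ is then automatic from this identification with $\chi(\lideal)$.
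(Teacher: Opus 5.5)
Your proof is correct and follows essentially the same route as the paper: you use the norm of $t_x$, show that at most one prime above each $\ell$ divides $t_x$, and read off $\chi(\lideal_{x,\ell})$ from whether $d_i$ is a square in $F_{\lideal}$; your observation that $t_x\notin\ell\OO_F$ because its coefficient on $\frac{1+\sqrt D}{2}$ is $1$ is a cleaner version of the paper's step ``$\ell$ would divide both $x$ and $\sqrt D$''. There are two harmless slips: $D\equiv1\pmod{16}$ need not hold (e.g.\ $d_1=-7$, $d_2=-15$ give $D=105\equiv9\pmod{16}$), since only the oddness of $D$ is needed to make $x$ odd; and $F_{\lideal}$ is ramified over $\Q_\ell$ when $\ell\mid D$, although, as you note, only its residue field $\F_\ell$ is actually used.
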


\begin{proof}
By definition,
\[
 \Norm_{F/\Q}(t_x)
 =
 \frac{x^2-D}{4}
 =
 -4m_x.
\]
The unique prime $\qideal_x$ above $2$ that divides $t_x$ occurs with
valuation
\(
 2+\ord_2(m_x).
\)
Dividing by $\qideal_x^2$ proves the integrality of $\Aideal_x$ and
gives \eqref{eq:norm-Aideal}.

Suppose that an odd prime $\ell$ divides $m_x$. Then
\(
 D\equiv x^2\pmod\ell,
\)
so $\ell$ is split or ramified in $F$, but never inert. Exactly one
prime above $\ell$ divides $t_x$. Indeed, if two distinct primes above
$\ell$ divided $t_x$, then $\ell$ would divide both $x$ and $\sqrt D$.
If $\ell\nmid D$, this is impossible, and if $\ell\mid D$, there is only
one ramified prime. The norm identity gives the first two assertions in
\eqref{eq:prime-dictionary}.

If $\ell\nmid 2D$, then
\(
 \left(\frac{d_1}{\ell}\right)
 =
 \left(\frac{d_2}{\ell}\right)
 =
 \eps(\ell),
\)
and this common symbol determines whether the relevant prime of $F$
splits in $E$. If $\ell\mid d_1$, then
\(
 E=F(\sqrt{d_2}),
\)
and $\left(\frac{d_2}{\ell}\right)=\eps(\ell)$ gives the same
criterion. The case $\ell\mid d_2$ is symmetric.

Finally, each $d_i$ is a square in $\Q_2$. Thus, every prime lying above $2$
splits in $E/F$, and
\(
 \chi(\lideal_{x,2})=1=\eps(2).
\)
\end{proof}

By the preceding lemmas, one can find that the value of $\varepsilon$ at $m_{x}$ is always~$-1$ for $x\in X_{D}$.

\begin{proposition}
\label{prop:genus-sign}
For any $x\in X_D$, we have
\begin{equation}
 \eps(m_x)=-1.\nonumber
\end{equation}
\end{proposition}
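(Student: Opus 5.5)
The plan is to compute $\chi(\Aideal_x)$ in two ways: once prime by prime, and once from the principal-ideal formula of Lemma~\ref{lem:genusfield}. Comparing the two values will give $\eps(m_x)=-1$.

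\emph{Factoring $\Aideal_x$.} By Lemma~\ref{lem:prime-dict}, for each prime $\ell\mid m_x$ exactly one prime $\lideal_{x,\ell}$ above $\ell$ divides $\Aideal_x$, and $\ord_{\lideal_{x,\ell}}(\Aideal_x)=\ord_\ell(m_x)$. Hence
\[
 \Aideal_x=\prod_{\ell\mid m_x}\lideal_{x,\ell}^{\ord_\ell(m_x)} .
\]
The character $\chi$ is unramified at all finite primes, so it is multiplicative on fractional ideals. Using $\chi(\lideal_{x,\ell})=\eps(\ell)$ and the complete multiplicativity of $\eps$, we get
\[
 \chi(\Aideal_x)=\prod_{\ell\mid m_x}\eps(\ell)^{\ord_\ell(m_x)}=\eps(m_x).
\]
This includes $\ell=2$, where $\eps(2)=1$.

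\emph{Evaluating $\chi(\Aideal_x)$ directly.} By definition $\Aideal_x=t_x\OO_F\,\qideal_x^{-2}$, so
\[
 \chi(\Aideal_x)=\chi(t_x\OO_F)\,\chi(\qideal_x)^{-2}=\chi(t_x\OO_F),
\]
because $\chi$ is quadratic. Lemma~\ref{lem:genusfield} then gives
\[
 \chi(t_x\OO_F)=\operatorname{sgn}\Bigl(\tfrac{x+\sqrt D}{2}\Bigr)\operatorname{sgn}\Bigl(\tfrac{x-\sqrt D}{2}\Bigr).
\]
Since $0<x<\sqrt D$, the first factor is positive and the second is negative. Equivalently, $\Norm(t_x)=-4m_x<0$. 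Therefore $\chi(t_x\OO_F)=-1$.

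Combining the two computations yields $\eps(m_x)=-1$. I do not expect a serious obstacle here; the only care needed is in the first step, making sure the prime-by-prime dictionary really covers every prime factor of $\Aideal_x$. That holds because the norm identity \eqref{eq:norm-Aideal}, together with the valuation matching in \eqref{eq:prime-dictionary}, leaves no room for other prime divisors.
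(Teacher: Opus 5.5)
Your proposal is correct and is essentially the paper's proof: both compute $\chi(\Aideal_x)$ once as $\chi(t_x\OO_F)=-1$ via the sign formula of Lemma~\ref{lem:genusfield} (the factor $\qideal_x^{-2}$ does not affect the quadratic character), and once as $\prod_{\ell\mid m_x}\eps(\ell)^{\ord_\ell(m_x)}=\eps(m_x)$ via the prime dictionary of Lemma~\ref{lem:prime-dict}. Your remark that the norm identity forces $\Aideal_x=\prod_{\ell\mid m_x}\lideal_{x,\ell}^{\ord_\ell(m_x)}$ simply makes explicit a step the paper leaves implicit.
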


\begin{proof}
The two real conjugates of $t_x$ have opposite signs since
$0<x<\sqrt D$. Lemma~\ref{lem:genusfield} therefore gives
\(
 \chi(t_x\OO_{F})=-1.
\)
The factor $\qideal_x^{-2}$ does not change the value of the character,
so
\(
 \chi(\Aideal_x)=~-1.
\)
Factoring $\Aideal_x$ and applying Lemma~\ref{lem:prime-dict}, we obtain that 
\(
 -1
 =
 \chi(\Aideal_x)
 =
 \prod_{\ell\mid m_x}
 \eps(\ell)^{\ord_\ell(m_x)}
 =
 \eps(m_x).
\)
\end{proof}

Finally, the following lemma regarding values of $\rho$ at prime-power inputs will also be useful.

\begin{lemma}
\label{lem:prime-power}
For an admissible integer $n$ and a prime $\ell$, we have
\begin{equation}
\label{eq:rho-prime-powers}
 \rho(\ell^a)
 =
 \begin{cases}
 a+1,&\eps(\ell)=1,\\
 1,&\eps(\ell)=-1\text{ and }a\text{ is even},\\
 0,&\eps(\ell)=-1\text{ and }a\text{ is odd}.
 \end{cases}
\end{equation}
In consequence, if
\(
 m=p^e n,\, p\nmid n,\, \eps(p)=-1,\, \eps(n)=1,
\)
then, for $1\le r\le e$,
\begin{equation}
\label{eq:rho-remove-prime}
 \rho(m/p^r)
 =
 \begin{cases}
  \rho(n),&e-r\text{ is even},\\
  0,&e-r\text{ is odd}.
 \end{cases}
\end{equation}
\end{lemma}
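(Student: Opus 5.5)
The first formula \eqref{eq:rho-prime-powers} follows directly from the definition \eqref{eq:rhoepsilon} together with the complete multiplicativity of $\eps$. The divisors of $\ell^a$ are $\ell^j$ for $0\le j\le a$, so
\[
 \rho(\ell^a)=\sum_{j=0}^{a}\eps(\ell)^j .
\]
If $\eps(\ell)=1$, every term equals $1$ and the sum is $a+1$. If $\eps(\ell)=-1$, the terms alternate $1,-1,1,\dots$, so the sum is $1$ when $a$ is even and $0$ when $a$ is odd. One preliminary remark is needed: $\ell^a$ must be admissible, which is implicit in the statement. If $\ell^a$ is not admissible, that is, $\left(\frac{D}{\ell}\right)=-1$ and $a\ge1$, then $\rho(\ell^a)=0$ by convention and $\eps(\ell)$ is undefined. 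So the formula is to be read for admissible $\ell$, and the case $a=0$ trivially gives $1$.

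For \eqref{eq:rho-remove-prime}, I would first record that $\rho$ is multiplicative on coprime admissible arguments. This holds because the divisor sum of a completely multiplicative function is multiplicative: divisors of $uv$ with $(u,v)=1$ factor uniquely as $a_1a_2$ with $a_1\mid u$ and $a_2\mid v$. Now take $m=p^en$ as in the statement. Since $m$ is admissible, both $p$ and $n$ are admissible, and so is $m/p^r=p^{e-r}n$ for $1\le r\le e$, with $p\nmid n$. Multiplicativity and the first part then give
\[
 \rho(m/p^r)=\rho(p^{e-r})\,\rho(n),
 \qquad
 \rho(p^{e-r})=\begin{cases}1,& e-r \text{ even},\\ 0,& e-r\text{ odd},\end{cases}
\]
using $\eps(p)=-1$. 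This is exactly \eqref{eq:rho-remove-prime}. The hypothesis $\eps(n)=1$ is not needed for this identity; it serves later uses, for instance combining with Proposition~\ref{prop:genus-sign} to force $e$ odd.

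There is no real obstacle here. The only point needing care is admissibility bookkeeping, namely that $m/p^r$ remains admissible so that $\rho$ is given by the divisor sum rather than the convention $\rho=0$, and that is immediate since its prime divisors are among those of $m$.
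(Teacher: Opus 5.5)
Your proof is correct and follows the same route as the paper: the first formula is the finite geometric sum $\sum_{b=0}^{a}\eps(\ell)^b$, and the second comes from multiplicativity of $\rho$ on coprime admissible arguments applied to $m/p^r=p^{e-r}n$. Your extra remarks are also accurate: the admissibility bookkeeping is right, and the hypothesis $\eps(n)=1$ is indeed not needed for the identity itself.
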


\begin{proof}
Equation \eqref{eq:rho-prime-powers} follows from the finite geometric
sum
\[
 \rho(\ell^a)
 =
 \sum_{b=0}^{a}\eps(\ell)^b.
\]
Equation \eqref{eq:rho-remove-prime} follows from multiplicativity.
\end{proof}

\section{The $2$-adic unit}
\label{sec:two-adic}

It is known, see for example \cite{Roskam}, that $\omega_2(\tau)$ is a
uniformizer for the genus-zero modular curve $X_0(2)$. It satisfies
the relation
\begin{equation}
\label{eq:modular-equation}
 j(\tau)
 =
 \frac{(\omega_2(\tau)+16)^3}{\omega_2(\tau)}
\end{equation}
with the modular $j$-invariant. Equivalently, $\omega_2(\tau)$ is a
root of
\begin{equation}
\label{eq:omega-cubic}
 X^3+48X^2+(768-j)X+4096=0.
\end{equation}
Moreover, $\beta_i$ is an algebraic integer whose $\Q$-conjugates are
\[
 \omega_2\left(\frac{-b+\sqrt{d_i}}{2a}\right),
 \qquad
 2\nmid a>0,
 \qquad
 b^2-4ac=d_i.
\]

The following elementary moduli interpretation of the condition that
$a$ is odd will be useful.

\begin{lemma}
\label{lem:odd-leading-line}
Let $[a,b,c]$ be a primitive positive definite binary quadratic form of
fundamental discriminant $d<0$. Write
\[
 \tau=\frac{-b+\sqrt d}{2a},
 \qquad
 E_\tau=\C/(\Z\tau+\Z),
 \qquad
 C_\tau=\left\langle\frac12\right\rangle\subset E_\tau[2].
\]
Then
\(
 \End(E_\tau)=\Z[a\tau]=\OO_d.
\)
The subgroup $C_\tau$ is stable under $\OO_d$ if and only if $a$ is
even. If $a$ is odd, its stabilizer in $\OO_d$ is
\(
 \Z+2\OO_d.
\)
In consequence, the $\Q$-Galois orbit of
\(
 \omega_2\left(\frac{-1+\sqrt d}{2}\right)
\)
is precisely the locus on which the cyclic subgroup of order $2$ is
not stable under the maximal CM order. This property is preserved under
specialization in every odd characteristic.
\end{lemma}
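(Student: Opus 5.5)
We work directly with the lattice $\Lambda=\Z\tau+\Z$ and the sublattice description of $C_\tau$. Since $[a,b,c]$ is primitive of fundamental discriminant $d$, the number $a\tau=\frac{-b+\sqrt d}{2}$ satisfies $(a\tau)^2+b(a\tau)+ac=0$. A direct check gives $a\tau\cdot\tau=-b\tau-c\in\Lambda$ and $a\tau\cdot1\in\Lambda$. Hence $\Z[a\tau]\subseteq\End(E_\tau)$. Since $b\equiv d\pmod 2$, we have $\Z[a\tau]=\Z[\tfrac{-1+\sqrt d}{2}]=\OO_d$. This is the maximal order, because $d$ is fundamental, so equality holds. (Alternatively, a multiplier $\lambda$ with $\lambda\Lambda\subseteq\Lambda$ must satisfy $\lambda=u+va\tau$ by comparing $\lambda\cdot1$ and $\lambda\tau$, using primitivity.)

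For the stability statement, I use that $C_\tau=\langle\frac12\rangle$ corresponds to the lattice $\Lambda'=\Z\tau+\Z\frac12\supset\Lambda$ of index $2$. An endomorphism $\lambda\in\OO_d$ preserves $C_\tau$ if and only if $\lambda\Lambda'\subseteq\Lambda'$. Take $\lambda=a\tau$, which together with $1$ generates $\OO_d$; it suffices to test this generator. The image of $\tau$ is $a\tau\cdot\tau=-b\tau-c\in\Lambda\subseteq\Lambda'$. The image of $\frac12$ is $\frac{a}{2}\tau$, which lies in $\Lambda'$ if and only if $a$ is even. So $C_\tau$ is $\OO_d$-stable exactly when $a$ is even. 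When $a$ is odd, the general element $u+va\tau$ preserves $C_\tau$ if and only if $v\frac a2\tau\in\Lambda'$, that is, if and only if $v$ is even. So the stabilizer is $\Z+2\Z a\tau=\Z+2\OO_d$.

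For the consequence, I use the description of the conjugates stated just before the lemma: the conjugates of $\beta$ are $\omega_2$ at $\tau=\frac{-b+\sqrt d}{2a}$ with $a$ odd. Points of $X_0(2)$ lying over $j$-values with CM by $\OO_d$ are pairs $(E_\tau,C)$. Since the reduced forms $[a,b,c]$ parametrize $\OO_d$-lattices and the three subgroups of order $2$ correspond to $\omega_2(\tau)$ and its transforms, each such pair can be written as $(E_\tau,\langle\frac12\rangle)$ for a suitable form. Hence the conjugates are exactly the pairs with non-stable $C$.

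I expect the counting check to be the only delicate step, since it confirms the orbit is all of the non-stable locus rather than a part of it. For each class there are $3$ subgroups of order $2$. Because $2$ splits in $\OO_d$ ($d\equiv1\pmod 8$), exactly two of them are stable, namely the kernels of the two primes above $2$. This leaves one non-stable subgroup per class, so the non-stable locus has $h(d)$ points. This matches the degree of $\beta$ over $\Q$, which is $h(d)$ since the cubic \eqref{eq:omega-cubic} factors accordingly. Thus the non-stable locus is a single Galois orbit.

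For specialization in odd characteristic $p$, I argue as follows. Reduction of endomorphisms is injective, and $C$ reduces isomorphically to a subgroup of order $2$ because $p\neq2$. Stability of $C$ under $\OO_d$ amounts to the vanishing of the composite $C\to E[2]\to E[2]/C$ induced by $\frac{-1+\sqrt d}{2}$. This is a finite étale condition over $\Z[\frac12]$, so it is preserved and reflected under reduction mod $p$.
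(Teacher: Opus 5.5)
Your proof is correct and follows the paper's argument: you test the generator $a\tau$ of $\OO_d$ on $\frac12$ (equivalently, on the overlattice $\Z\tau+\Z\frac12$) to get the parity criterion and the stabilizer $\Z+2\OO_d$, and you use that $E[2]$ is finite \'etale away from $2$ for the specialization claim. Your counting check for the Galois-orbit statement goes beyond what the paper writes; the paper simply reads that claim off the stated description of the conjugates. The count is that $2$ splits, so each class has exactly one non-stable line, giving $h(d)$ points. Note, however, that the equality $[\Q(\beta):\Q]=h(d)$ is an input from Lemma~\ref{lem:resultant}(i), not a consequence of the cubic \eqref{eq:omega-cubic} ``factoring accordingly.''
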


\begin{proof}
Since $d$ is fundamental and odd,
\(
 a\tau=\frac{-b+\sqrt d}{2}
\)
generates $\OO_d$ over $\Z$. Write an element of $\OO_d$ as
\(
 m+na\tau.
\)
Its action on the generator $1/2$ of $C_\tau$ is
\(
 \frac{m}{2}+\frac{na}{2}\tau
 \pmod{\Z\tau+\Z}.
\)
This lies in $\{0,1/2\}$ modulo the period lattice if and only if
$na$ is even. Hence, every element of $\OO_d$ preserves $C_\tau$ exactly
when $a$ is even. If $a$ is odd, the elements that preserve $C_\tau$
are those for which $n$ is even, namely the elements of
$\Z+2\OO_d$.

In odd   characteristic, $E[2]$ is finite $\acute{e}$tale . Specialization
therefore identifies its geometric points and the actions of the
specialized CM endomorphisms. Thus, stability and nonstability of the
distinguished subgroup are unchanged under specialization.
\end{proof}

The next lemma specifies the $2$-adic unit'ness of the roots of~\eqref{eq:omega-cubic} when $j$ is a~$2$-adic unit.

\begin{lemma}
\label{lem:branches}
Let $j$ be a $2$-adic unit. The three roots of
\eqref{eq:omega-cubic} have valuations $12,0,0$. The unique root of
positive valuation is the branch corresponding to the connected
canonical subgroup of an ordinary elliptic curve.
\end{lemma}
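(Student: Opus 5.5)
The valuation claim reduces to a Newton polygon computation for the cubic \eqref{eq:omega-cubic} over $\Z_2$, and the moduli identification to a comparison with the $q$-expansion of $\omega_2$ along the ordinary locus.

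\textbf{Valuations.} Write $v=\ord_2$. The coefficients of \eqref{eq:omega-cubic} have valuations
\[
v(1)=0,\quad v(48)=4,\quad v(768-j)=0,\quad v(4096)=12,
\]
where $v(768-j)=0$ because $v(768)=8$ and $v(j)=0$. The Newton polygon uses the points $(0,12),(1,0),(2,4),(3,0)$. Its lower convex hull has a segment from $(0,12)$ to $(1,0)$ of slope $-12$ and a segment from $(1,0)$ to $(3,0)$ of slope $0$; the point $(2,4)$ lies above the latter. Hence exactly one root has valuation $12$ and two roots are units. A second check is the product of the roots, which is $-4096$ of valuation $12$. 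The unit roots come from the reduction $X^2(X+48)+(768-j)X\equiv X(X^2+\bar{j})$ modulo $2$. Since this factors off the root of positive valuation, the small root is simple and defined over $\Q_2(j)$ by Hensel's lemma.

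\textbf{Moduli identification.} Here I would use the $q$-expansion $\omega_2(\tau)=2^{12}q\prod_{n}(1+q^n)^{24}$. The pair $(E_\tau,\langle 1/2\rangle)$ corresponds to $\tau$, and $\langle 1/2\rangle$ is the $\mu_2$-subgroup of the Tate curve. On the Tate curve $\mathrm{Tate}(q)$ over $\Z_2((q))$, the subgroup $\mu_2\subset\mathbb{G}_m/q^{\Z}$ is the connected canonical subgroup, and $\omega_2$ takes the value $2^{12}q(1+\cdots)$ there. The other two subgroups are $\langle\pm q^{1/2}\rangle$, on which $\omega_2$ is evaluated at $(\tau+k)/2$. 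Using the Atkin–Lehner and $\tau\mapsto-1/(2\tau)$ relation $\omega_2(-1/(2\tau))=2^{12}\cdot 2^{12}/\omega_2(\tau)$, adjusted appropriately, or directly $\omega_2((\tau+k)/2)=2^{12}\eta(\tau+k)^{24}/\eta((\tau+k)/2)^{24}$, the value is $2^{12}\cdot(\pm q^{-1/2})(1+\cdots)$. These are $2$-adically large in $q$-adic terms, but the correct comparison is via the algebraic relation: the branch $\omega\to 0$ as $j\to\infty$ corresponds to $\mu_2$, and $\omega\sim j$ for the others.

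To pass from the cusp to an arbitrary unit $j$, I would argue by continuity or connectedness on the ordinary locus. The root of valuation $12$ is a rigid-analytic section over the ordinary region $\{v(j)\le 0\}$. This region includes $|j|\ge1$, so it contains both the units and a neighbourhood of the cusp. The section never collides with the other roots there, so on this connected affinoid region it is a single analytic branch. Near the cusp it equals the canonical-subgroup branch by the $q$-expansion computation, so it equals that branch everywhere. Equivalently, the canonical subgroup (Katz–Lubin) is the kernel of Frobenius mod $2$, and it is characterized by $\omega_2$ being divisible by $2^{12}$, since the formula $\omega_2=2^{12}\Delta(2\tau)/\Delta(\tau)$ reduces mod $2$ to $0$ exactly on the Frobenius-kernel branch.

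\textbf{Main obstacle.} I expect the main difficulty to be making this branch identification rigorous rather than heuristic. I would first try the cleanest route: over the ordinary locus, the canonical subgroup $H$ gives $E/H\cong E^{(2)}$ up to lifting, so $j(E/H)\equiv j^2$ mod $2$. The branch whose quotient $j$-invariant satisfies the Kronecker congruence should then be matched to small $\omega$, using $j(2\tau)=(\omega+256)^3/\omega^2$.
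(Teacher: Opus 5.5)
Your Newton-polygon computation is exactly the paper's. For the identification you use the same input as the paper: $\omega_2=2^{12}q\prod(1+q^n)^{24}$ on $(\mathrm{Tate}(q),\mu_2)$, with $\mu_2$ the canonical subgroup. The paper propagates this along the component of the ordinary local model of $X_0(2)$ that parametrizes connected subgroups and contains $\infty$. You instead propagate it from the cusp by rigid-analytic continuation over the ordinary disc $\{|j|\ge1\}$, which needs only the generic fibre.

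That route can work, but the step ``near the cusp it equals the canonical-subgroup branch, so it equals that branch everywhere'' does not follow from what you state. Non-collision of the roots only shows that the small root $\alpha(j)$ is a single analytic function on the disc. You also need two facts about the map $E\mapsto(E,H_E)$, with $H_E$ the canonical subgroup:
\begin{enumerate}[label=(\alph*)]
\item it is itself a rigid-analytic section over the whole ordinary disc (Katz--Lubin);
\item at points of good ordinary reduction it is the connected part of $E[2]$.
\end{enumerate}
Then $\omega_2(E,H_E)-\alpha(j)$ vanishes near the cusp, hence identically on the connected affinoid. State that citation explicitly, since it carries the argument. (For $v(j)<0$ the small root has valuation $12-v(j)$, not $12$.)

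Several side computations are wrong, and your proposed rigorous fallback fails.
\begin{enumerate}[label=(\roman*)]
\item The Fricke relation is $\omega_2(-1/(2\tau))=2^{12}/\omega_2(\tau)$, not $2^{24}/\omega_2(\tau)$.
\item The value on $(E_\tau,\langle(\tau+k)/2\rangle)$ is $\omega_2(-1/(\tau+k))=2^{12}/\omega_2((\tau+k)/2)=\pm q^{-1/2}(1+\cdots)$, with no factor $2^{12}$. Otherwise the product of the roots would be $-2^{36}$ rather than $-2^{12}$, and the valuations would contradict your own Newton polygon, which gives $v(j)/2$ for the two large roots when $v(j)<0$.
\item On those branches $\omega^2\sim j$, not $\omega\sim j$, as your reduction $X(X^2+\bar j)$ already shows.
\end{enumerate}

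The Kronecker-congruence route cannot separate the branches in general. From $j(2\tau)=(\omega+256)^3/\omega^2$ one gets $j(E/C)\equiv\bar j^{\,2}$ on the small root but $j(E/C)\equiv\omega\equiv\bar j^{\,1/2}$ on a unit root. These agree whenever $\bar j\in\F_4^\times$, for instance $\bar j=1$ for $d=-7$, $j=-3375$, a discriminant allowed in the paper.

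Your remark that $2^{12}\Delta(2\tau)/\Delta(\tau)$ ``reduces to $0$ exactly on the Frobenius branch'' is not an argument as written, since the $2^{12}$ is present on every branch. Its correct form, however, is the cleanest proof:
\begin{enumerate}[label=(\arabic*)]
\item For $\phi\colon E\to E/C$ one has $\omega_2(E,C)=\Delta(E/C,\omega')/\Delta(E,\phi^*\omega')$.
\item Take $\omega,\omega'$ integral generators and write $\phi^*\omega'=c\,\omega$ and $\hat\phi^*\omega=c'\omega'$. Then $cc'=2$.
\item If $C$ is \'etale, $c$ is a unit. If $C$ is connected, $\hat\phi$ is \'etale, so $c'$ is a unit and $\ord_2(c)=1$.
\item With good reduction both discriminants are units, so $\ord_2\omega_2(E,C)=12\ord_2(c)$. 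This equals $12$ exactly for the canonical subgroup and $0$ otherwise.
\end{enumerate}
This settles the identification pointwise with no continuation argument, and the Newton polygon becomes only a consistency check.
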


\begin{proof}
The valuations of the coefficients in \eqref{eq:omega-cubic}, listed
from the constant term to the leading term, are
\(
 12,\, 0,\, 4,\, 0,
\)
since $768-j$ is a unit. The lower Newton polygon has one segment of
slope $-12$ and horizontal length one, followed by a horizontal segment
of length two. Hence, the root valuations are $12,\,0,\,0$.

For an ordinary elliptic curve in characteristic $2$, the finite flat
group scheme $E[2]$ has a unique connected subgroup of rank two. This
is the canonical subgroup. It is the branch that specializes to the
connected component in the ordinary local model of $X_0(2)$. Since
\(
 \omega_2=2^{12}q+O(q^2)
\)
at the cusp $\infty$, this branch has positive parameter valuation,
whereas the two $\acute{e}$tale  branches have unit parameter. This is the
standard ordinary local model described in
\cite[Chapters~IV--V]{DR} and \cite[Chapters~12--13]{KM}.
\end{proof}

\begin{proposition}
\label{prop:unit}
For any embedding of a conjugate of $\beta_i$ into $\overline{\Q}_2$,
the image is a $2$-adic unit.
\end{proposition}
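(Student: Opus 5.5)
Since $d_i\equiv1\pmod 8$, the prime $2$ splits in $K_i$, so every elliptic curve with CM by $\OO_i$ has ordinary reduction at every prime above $2$. In particular the singular modulus $j_i=j\bigl(\frac{-1+\sqrt{d_i}}{2}\bigr)$ and all its conjugates reduce to a nonzero value, because $j=0$ is supersingular in characteristic $2$. Hence every embedding of a conjugate of $j_i$ into $\overline{\Q}_2$ is a $2$-adic unit. By Lemma~\ref{lem:branches}, among the three roots of \eqref{eq:omega-cubic} with $j=j_i^\sigma$, exactly one has positive valuation (namely $12$), and it is the branch whose level structure is the connected canonical subgroup. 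The other two roots are units. It therefore suffices to show that no conjugate of $\beta_i$ is the canonical-subgroup branch.

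For this I would use Lemma~\ref{lem:odd-leading-line}. The conjugates of $\beta_i$ correspond to pairs $(E_\tau,C_\tau)$ with $a$ odd, that is, to subgroups $C_\tau$ not stable under $\OO_i$. The canonical subgroup, by contrast, is stable under all of $\End(E)$. Indeed, over the ring of integers of a finite extension of $\Q_2$ (after fixing a model with good ordinary reduction), the canonical subgroup is the kernel of reduction of $E[2]$, that is, the connected part of the finite flat group scheme $E[2]$. Every endomorphism extends to the Néron model and preserves connected components, so it maps this subgroup into itself. Thus the canonical subgroup is $\OO_i$-stable, and it corresponds to the root $\omega_2(\tau)$ with $a$ even, the fourth-type conjugates that are not in the orbit of $\beta_i$.

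Concretely, fix an embedding $\overline{\Q}\hookrightarrow\overline{\Q}_2$ and a conjugate $\beta_i^\sigma=\omega_2(\tau)$ with $a$ odd, and let $j=j(\tau)$. The three roots of \eqref{eq:omega-cubic} are $\omega_2$ evaluated at the three cyclic $2$-subgroups of $E_\tau$. Since $2$ splits in $K_i$, exactly two of these three subgroups are $\OO_i$-stable: they are the kernels of the two primes above $2$, and the remaining subgroup is the one with stabilizer $\Z+2\OO_i$, namely $C_\tau$. The canonical subgroup is one of the two stable subgroups, so $C_\tau$ is not canonical, and Lemma~\ref{lem:branches} gives valuation $0$.

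The main obstacle is matching the canonical subgroup with the $\omega_2$-branch and justifying that it is $\End$-stable with the correct analytic normalization. This matching is exactly the content asserted in Lemma~\ref{lem:branches}, so I would rely on it together with the functoriality of the connected–étale sequence. As a fallback, the assertion also follows from the norm relation: the product of the three roots is $-4096$. One could instead show directly that the root attached to a subgroup $\ker\mathfrak p$ with $\mathfrak p\mid 2$ carries the full valuation $12$ for the appropriate choice of $\mathfrak p$, but I expect the structural argument above to suffice.
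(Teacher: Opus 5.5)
Your proposal is correct and follows essentially the same route as the paper. Ordinary reduction at $2$ makes the singular $j$-value a $2$-adic unit, Lemma~\ref{lem:branches} isolates the canonical-subgroup branch as the only root of positive valuation, and Lemma~\ref{lem:odd-leading-line} shows that conjugates of $\beta_i$ correspond to subgroups not stable under $\OO_i$, hence not canonical; the only cosmetic difference is that you justify $\OO_i$-stability of the canonical subgroup by functoriality of the connected part of $E[2]$ on the N\'eron model, whereas the paper invokes the splitting $\OO_i\otimes\Z_2\cong\Z_2\oplus\Z_2$ of the $2$-divisible group.
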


\begin{proof}
Fix a conjugate and realize it as $\omega_2(E,C)$ using
\cite[Proposition~8]{Roskam}. After a finite extension of $\Q_2$, the
CM elliptic curve has good reduction. Since $2$ splits in $K_i$, the
reduction is ordinary by the CM reduction criterion. In consequence,
its $j$-invariant is a $2$-adic unit, since the only supersingular
$j$-invariant in characteristic $2$ is $0$.

The connected canonical subgroup is stable under $\OO_i$, since
\(
 \OO_i\otimes\Z_2\cong\Z_2\oplus\Z_2
\)
decomposes the $2$-divisible group into its connected and $\acute{e}$tale  CM
factors. By Lemma~\ref{lem:odd-leading-line}, the subgroup $C$ in the
orbit with odd leading coefficient is not stable under the maximal
order. Its stabilizer is the index-two order
\(
 \Z+2\OO_i.
\)
Hence, $C$ is not the connected canonical subgroup. Lemma~\ref{lem:branches}
therefore places $\omega_2(E,C)$ on one of the two unit branches.
\end{proof}

Now we are ready to prove the formula~\eqref{porderformula} for~$p=2$.

\begin{proposition}
\label{prop:orderat2}
Under \eqref{eq:hypotheses}, we have
\begin{equation}
\label{eq:two-adic-unit}
 \ord_2
 \left|
 \Norm_{\Q(\beta_1,\beta_2)/\Q}(\beta_1-\beta_2)
 \right|
 =
 0
 =
 \sum_{x\in X_D}\sum_{r\ge1}\rho(m_x/2^r).
\end{equation}
\end{proposition}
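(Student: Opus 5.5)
Both sides of \eqref{eq:two-adic-unit} vanish for separate reasons: the left side by a reduction argument built on Proposition~\ref{prop:unit}, and the right side by a parity argument built on Proposition~\ref{prop:genus-sign}.

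\emph{Arithmetic side.} Fix $x\in X_D$ and write $m_x=2^e n$ with $n$ odd. Since $d_1\equiv1\pmod 8$, we have $\eps(2)=\left(\frac{d_1}{2}\right)=1$. If $e=0$, then $m_x/2^r$ is never an integer for $r\ge1$, so every term is $0$. If $e\ge1$, then for $1\le r\le e$ multiplicativity of $\rho$ gives
\[
 \rho(m_x/2^r)=\rho(2^{e-r})\rho(n)=(e-r+1)\rho(n),
\]
and the terms with $r>e$ vanish. It therefore suffices to show $\rho(n)=0$. By Proposition~\ref{prop:genus-sign} and $\eps(2)=1$, we get $\eps(n)=\eps(m_x)=-1$. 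Hence some odd prime $\ell$ with $\eps(\ell)=-1$ divides $n$ to an odd power $a$. Then $\rho(\ell^a)=0$ by \eqref{eq:rho-prime-powers}, and multiplicativity gives $\rho(n)=0$. Summing over $x\in X_D$, the right side is $0$.

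\emph{Norm side.} The number $\beta_1-\beta_2$ is an algebraic integer. Its $\Q$-conjugates all have the form $\beta_1'-\beta_2'$, where $\beta_i'$ is a conjugate of $\beta_i$. It therefore suffices to show that, for every embedding into $\overline{\Q}_2$ and every such pair of conjugates, $\beta_1'-\beta_2'$ is a $2$-adic unit. Suppose instead that $\beta_1'\equiv\beta_2'$ modulo the maximal ideal. By Proposition~\ref{prop:unit}, both are $2$-adic units, so \eqref{eq:modular-equation} may be applied without dividing by a non-unit. It gives
\[
 j_1=\frac{(\beta_1'+16)^3}{\beta_1'}\equiv\frac{(\beta_2'+16)^3}{\beta_2'}=j_2
\]
modulo the maximal ideal, where $j_i$ is the $j$-invariant of the CM curve $E_i$ with CM by $\OO_i$, realized as in the proof of Proposition~\ref{prop:unit}.

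After a finite extension, the curves $E_1$ and $E_2$ have good reduction. Since their reduced $j$-invariants agree, the reductions are isomorphic over $\overline{\F}_2$. Reduction of endomorphisms is injective, so the endomorphism ring of this common reduction contains copies of both $\OO_1$ and $\OO_2$. Because $K_1\ne K_2$ (as $d_1,d_2$ are distinct and coprime), these two orders generate a noncommutative algebra. Hence the reduction is supersingular. This contradicts the ordinarity established in Proposition~\ref{prop:unit}, which holds because $2$ splits in each $K_i$. Therefore every conjugate of $\beta_1-\beta_2$ is a $2$-adic unit, and $\ord_2$ of the norm is $0$.

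The main care point is the passage from $\beta_1'\equiv\beta_2'$ to $j_1\equiv j_2$. This step uses that both values are units, so the denominator $\beta_i'$ in \eqref{eq:modular-equation} does not destroy the congruence, and that is exactly what Proposition~\ref{prop:unit} supplies. The remaining steps are standard.
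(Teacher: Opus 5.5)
Your proof is correct and follows essentially the same route as the paper. On the norm side, you use Proposition~\ref{prop:unit} to turn a congruence $\beta_1'\equiv\beta_2'$ into equal reduced $j$-invariants, then reach a contradiction because both $K_1$ and $K_2$ would embed in the endomorphism algebra of one ordinary curve. On the product side, $\eps(2)=1$ and $\eps(m_x)=-1$ force a prime $\ell$ with $\eps(\ell)=-1$ to odd exponent in the odd part of $m_x$. The differences are cosmetic: you obtain the $j$-congruence through the unit denominator in \eqref{eq:modular-equation} rather than through $\overline{j}=\overline{\beta}^{\,2}$, and you write out $\rho(m_x/2^r)=(e-r+1)\rho(n)$ explicitly.
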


\begin{proof}
Let
\(
 L=\Q(\beta_1,\beta_2)
\)
and fix an embedding
$
 \sigma:L\hookrightarrow\overline{\Q}_2.
$
By Proposition~\ref{prop:unit}, both $\beta_1^\sigma$ and
$\beta_2^\sigma$ are $2$-adic units. Reducing \eqref{eq:modular-equation}
modulo $2$ gives, for a unit root, we have
\begin{equation}
\label{eq:j-beta-mod2}
 \overline{j}=\overline{\beta}^{\,2}.
\end{equation}

Suppose that
\(
 \overline{\beta_1^\sigma}
 =
 \overline{\beta_2^\sigma}.
\)
Then \eqref{eq:j-beta-mod2} shows that the two ordinary reductions have
the same $j$-invariant. Hence, they are isomorphic over
$\overline{\F}_2$ to a single ordinary elliptic curve $E$.
Specialization of endomorphisms under good reduction is injective.
After transporting one action across the isomorphism, both $K_1$ and
$K_2$ would embed into $\End^0(E)$. The rational endomorphism algebra
of an ordinary elliptic curve over $\overline{\F}_2$ is a single
imaginary quadratic field, so it cannot contain two distinct quadratic
subfields. This contradicts $d_1\ne d_2$.

Thus
\(
 \beta_1^\sigma-\beta_2^\sigma
\)
is a unit for any $\sigma$. Taking the product over all embeddings of
$L$ proves the first equality in \eqref{eq:two-adic-unit}. This argument
does not require linear disjointness.

On the product side, $\eps(2)=1$ and
\(
 \eps(m_x)=-1
\)
by Proposition~\ref{prop:genus-sign}. Hence, some prime $\ell$ with
$\eps(\ell)=-1$ occurs to an odd power in $m_x$, and this remains true
after division by any power of $2$. Lemma~\ref{lem:prime-power}
therefore gives
\(
 \rho(m_x/2^r)=0
\)
for any $r\ge1$.
\end{proof}

\section{CM embeddings and special endomorphism cycles}
\label{sec:KY-dictionary}

Fix an odd prime $p$ and $x\in X_D$. Suppose that there is an  $i\in\{1,2\}$ such that
\begin{equation}
\label{eq:base-choice}
 p\nmid d_i,
 \qquad
 \left(\frac{d_i}{p}\right)=-1,
\end{equation}
and let $j\ne i$. As one shall see in Lemma~\ref{lem:lifting-support} that on every nonempty component of $$
 \Spec\left({\Z_p[t,t^{-1}]}/{(P_1(t),P_2(t))}\right),
 $$
 such an index~$i$ always exists.
Write
\(
 d=d_i,
 \,
 K=K_i,
 \,
 \OO=\OO_i,
 \,
 s=\iota_i(\sqrt d),
 \)
 where $\imath_{i}:\mathcal{O}\hookrightarrow{\rm End}(E)$ denotes the maximal CM embedding of $\mathcal{O}$,
 and
 \(
 \partial=\sqrt d\,\OO.
\)
For
\(
 \xi\in\{x,-x\},
\)
define
\begin{equation}
 \lambda=-\frac{2}{\sqrt d}\pmod{\OO}
 \in\partial^{-1}/\OO,
 \qquad
 r_\xi
 =
 \frac{\sqrt d+\xi}{2\sqrt d}\pmod{\OO}
 \in\partial^{-1}/\OO.\nonumber
\end{equation}
The second inclusion follows since
\(
 \frac{\sqrt d+\xi}{2}\in\OO
\)
for odd $\xi$. At every prime dividing $d$, the integer $2$ is a unit,
so $\lambda$ generates $\partial^{-1}/\OO$. In the notation of
\cite[equation~(1.4)]{KY}, this gives
\begin{equation}
\label{eq:partial-lambda}
 \partial_\lambda=\partial,
 \qquad
 \Delta(\lambda)=N(\partial_\lambda)=|d|.
\end{equation}

The special endomorphism cycle
\(
 \mathcal Z(m_x;\OO_i,\lambda,r_\xi)
\)
classifies triples $(E,\iota_i,u)$ satisfying
\begin{equation}
\label{eq:KY-conditions}
 u\iota_i(a)=\iota_i(\overline a)u,
 \qquad
 \deg(u)=m_x,
 \qquad
 r_\xi+u\lambda\in\End(E).
\end{equation}
These triples are characterized in the following proposition.

\begin{proposition}
\label{prop:reconstruct}
Fix an odd prime $p$. In the category of deformations over
$p$-nilpotent bases of a supersingular elliptic curve with an optimal
$\OO_i$-action, the following two finite moduli problems are 
equivalent:
\begin{enumerate}[label=(\alph*)]
\item ordered pairs of maximal CM embeddings
\[
 \iota_i:\OO_i\hookrightarrow\End(E),
 \qquad
 \iota_j:\OO_j\hookrightarrow\End(E)
\]
such that
\(
 \frac12\Trd(s_is_j)=\xi
\)
and
\(
 u=\frac{s_is_j-\xi}{4}\in\End(E);
\)

\item triples
\(
 (E,\iota_i,u)\in
 \mathcal Z(m_x;\OO_i,\lambda,r_\xi).
\)
\end{enumerate}
The two constructions are inverse and preserve automorphisms.
\end{proposition}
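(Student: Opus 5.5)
The plan is to write down the two constructions explicitly and check the defining conditions by quaternion-algebra identities. All identities will be verified inside $\End(E_0)$, where $E_0$ is the supersingular special fibre. For a deformation $E_S$ over a $p$-nilpotent base $S$, rigidity makes $\End(E_S)\hookrightarrow\End(E_0)$ injective. So a relation holds in $\End(E_S)$ once it holds in $\End(E_0)$, and membership in $\End(E_S)$ is exactly the condition that the endomorphism lifts. We work in $B=\End^0(E_0)$ with $s_i=\iota_i(\sqrt{d_i})$ and $s_j=\iota_j(\sqrt{d_j})$, so $s_i^2=d_i$, $s_j^2=d_j$, $\overline{s_i}=-s_i$, and $\Trd(s_is_j)=s_is_j+s_js_i$. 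In \eqref{eq:KY-conditions} we read $u\lambda$ as $u\circ\iota_i(\lambda)$, with $\iota_i$ extended to $\OO_i[1/d]$. This is legitimate because $r_\xi+u\lambda$ only needs to be computed in $\End(E)\otimes\Z[1/d]$.

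\emph{From (a) to (b).} Put $u=(s_is_j-\xi)/4$.
\begin{itemize}
\item \emph{Anticommutation.} The trace condition gives $s_js_i=2\xi-s_is_j$. Hence $us_i=(\xi s_i-d_is_j)/4=-s_iu$. So $u$ anticommutes with $s_i$, which is the relation $u\iota_i(a)=\iota_i(\overline a)u$.
\item \emph{Degree.} We compute $\Nrd(s_is_j-\xi)=\Nrd(s_is_j)-\xi\Trd(s_is_j)+\xi^2=D-\xi^2=16m_x$. Therefore $\deg u=m_x$.
\item \emph{Integrality.} Using $s_i^{-1}=s_i/d_i$ and the anticommutation, we get $u\lambda=-2us_i^{-1}=2s_iu/d_i=s_j/2-\xi s_i^{-1}/2$. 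Since $r_\xi=\tfrac12+\xi s_i^{-1}/2$, this gives
\[
 r_\xi+u\lambda=\frac{1+s_j}{2}=\iota_j\left(\frac{1+\sqrt{d_j}}{2}\right)\in\End(E).
\]
\end{itemize}

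\emph{From (b) to (a).} Given $(E,\iota_i,u)$, define $s_j=2(r_\xi+u\lambda)-1=s_i(4u+\xi)/d_i$.
\begin{itemize}
\item Since $u$ anticommutes with $s_i$, we have $\Trd(u)=0$, so $u^2=-\Nrd(u)=-m_x$.
\item Moving $s_i$ past $4u+\xi$ turns it into $-4u+\xi$, so
\[
 s_j^2=\frac{s_i^2(\xi-4u)(\xi+4u)}{d_i^2}=\frac{\xi^2+16m_x}{d_i}=\frac{D}{d_i}=d_j.
\]
\item $(1+s_j)/2=r_\xi+u\lambda$ lies in $\End(E)$, so $\sqrt{d_j}\mapsto s_j$ extends to an embedding $\iota_j:\OO_j\hookrightarrow\End(E)$. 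It is automatically optimal because $\OO_j$ is the maximal order of $K_j$.
\item $s_is_j=(4u+\xi)$ by construction, so $u=(s_is_j-\xi)/4$. Because $\Trd(u)=0$, we recover $\tfrac12\Trd(s_is_j)=\xi$.
\end{itemize}

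Both formulas are visibly mutually inverse and functorial in $S$. An automorphism of $E$ fixes $(\iota_i,\iota_j)$ if and only if it fixes $(\iota_i,u)$, since each datum is a polynomial expression in the other. So automorphism groups agree.

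The step I expect to need the most care is not the algebra but the base change. One must make sure that $u$, $s_j$ and $r_\xi+u\lambda$ are genuinely endomorphisms of the deformation $E_S$, and not merely of $E_0$ or of $E_S$ after inverting $d_i$. I would handle this by noting two facts. First, $2$ and $d_i$ are units away from the primes dividing $d_i$, and $p\nmid d_i$ by \eqref{eq:base-choice}. Second, the two integrality conditions "$u\in\End(E_S)$" and "$(1+s_j)/2\in\End(E_S)$" are equivalent over $\Z_{(p)}$ via the explicit linear relation $s_j=s_i(4u+\xi)/d_i$. With both facts in hand, each condition in (a) corresponds exactly to one in (b) as closed conditions on $S$.
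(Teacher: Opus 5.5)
Your proposal is correct and follows the paper's proof essentially step for step. It uses the same identities: $us_i=-s_iu$ from the trace condition, $\Nrd(s_is_j-\xi)=D-\xi^2$, $r_\xi+u\lambda=(1+s_j)/2$, and conversely $s_j=s_i^{-1}(4u+\xi)$ with $s_j^2=d_j$ and $s_is_j=4u+\xi$.

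Your extra paragraph on base change to $p$-nilpotent deformations makes explicit what the paper leaves implicit. By rigidity, both sides impose literally the same integrality conditions inside $\End^0(E_0)$. One slip: the phrase ``$2$ and $d_i$ are units away from the primes dividing $d_i$'' should say that $\lambda,r_\xi\in\partial^{-1}$ are integral away from the primes dividing $d_i$.
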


\begin{proof}
Start with a tuple in (a). For trace-zero elements of a quaternion
algebra, we have that 
\(
 s_is_j+s_js_i=\Trd(s_is_j)=2\xi.
\)
It follows that
\(
 us_i=-s_iu,
\)
so $u$ is $\OO_i$-antilinear. By assumption, $u$ is integral.
Moreover,
\(
 \Trd(u)=0.
\)
Since
\(
 \Trd(s_is_j)=2\xi,
 \)
 and
 \(
 \Nrd(s_is_j)=D,
\)
we have
\(
 \Nrd(s_is_j-\xi)=D-\xi^2.
\)
The reduced characteristic polynomial therefore gives
\(
 u^2=-\Nrd(u),
\)
and
\(
 \Nrd(u)=\frac{D-\xi^2}{16}=m_x.
\)
Thus, $\deg(u)=m_x$.

Let
\(
 A_j=\frac{1+s_j}{2}.
\)
Using $us_i=-s_iu$ and $s_i^{-1}=s_i/d_i$, we obtain
\begin{align}
 r_\xi+u\lambda
 &=\frac12+\frac{\xi}{2s_i}-\frac{2u}{s_i}
 \label{eq:reconstruct-one}\\
 &=\frac12+\frac{\xi}{2s_i}+2s_i^{-1}u
 =\frac{1+s_j}{2}
 =A_j. \label{eq:reconstruct-two}
\end{align}
Thus, the integrality condition in \eqref{eq:KY-conditions} is exactly
the integrality of the second maximal CM generator.

Conversely, start with a triple in (b) and put
\begin{equation}
\label{eq:recover-sj}
 A_j=r_\xi+u\lambda,
 \qquad
 s_j=2A_j-1=s_i^{-1}(4u+\xi).
\end{equation}
An antilinear endomorphism has reduced trace zero and satisfies
\(
 u^2=-\deg(u)=-m_x.
\)
Since $us_i=-s_iu$, then we have that 
\[
\begin{aligned}
 s_j^2
 &=s_i^{-2}(-4u+\xi)(4u+\xi)=d_i^{-1}(\xi^2-16u^2)=d_i^{-1}(x^2+16m_x)=d_j.
\end{aligned}
\]
Moreover,
\(
 A_j^2-A_j+\frac{1-d_j}{4}=0.
\)
Thus, $A_j$ defines an embedding of the maximal order
\(
 \OO_j=\Z\left[\frac{1+\sqrt{d_j}}{2}\right]
\)
into $\End(E)$. This embedding is optimal since $\OO_j$ is maximal.
Finally, we have 
\(
 s_is_j=4u+\xi,
 \)
 and
 \(
 \frac12\Trd(s_is_j)=\xi.
\)
Equations \eqref{eq:reconstruct-one} and \eqref{eq:recover-sj} show
that the two constructions are inverse. They also preserve morphisms
and automorphisms.
\end{proof}

\subsection{Recovery of the nonstable subgroup of order $2$}

Since $p$ is odd, $E[2]$ is finite $\acute{e}$tale  and its geometric points form
a two-dimensional $\F_2$-vector space. Write 
\(
 A_i=\frac{1+s_i}{2}.
\)
The split algebra
\(
 \OO_i\otimes\F_2\cong\F_2\times\F_2
\)
acts faithfully on $E[2]$. Indeed, if $\alpha\in\OO_i$ annihilates
$E[2]$, then $\alpha$ factors through multiplication by $2$, so
\(
 \alpha=2\beta
\)
for some $\beta\in\End(E)$. Optimality implies that
$\beta\in\OO_i$. Hence, the reduction of $A_i$ modulo $2$ is a
nontrivial rank-one idempotent.

\begin{lemma}
\label{lem:lines}
For a triple specified in Proposition~\ref{prop:reconstruct}, we have 
\begin{equation}
\label{eq:actions-mod2}
 A_j\equiv A_i+cI\pmod2,
 \qquad
 c\equiv\frac{\xi-d_i}{2}\pmod2.
\end{equation}
Hence, $A_i$ and $A_j$ have the same two stable lines in $E[2]$
and exactly one common nonstable line.
\end{lemma}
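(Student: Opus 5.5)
The congruence will come from expressing $A_j$ directly in terms of $A_i$ and $u$, using the identities already obtained in the proof of Proposition~\ref{prop:reconstruct}. The statement about lines will then follow from linear algebra over $\F_2$ on the two-dimensional space $E[2]$.

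\textbf{Step 1: the congruence.} From \eqref{eq:recover-sj} we have $s_is_j=4u+\xi$. Multiplying on the left by $s_i$ and using $s_i^2=d_i$ gives
\[
 d_is_j=s_i(4u+\xi)=4s_iu+\xi s_i .
\]
Since $d_i$ is odd, it is a unit modulo $2$, so I will work in $\End(E)\otimes\Z_{(2)}$. There
\[
 s_j=d_i^{-1}\bigl(\xi s_i+4s_iu\bigr).
\]
Substituting $s_i=2A_i-1$ and $s_j=2A_j-1$ and solving for $A_j$ yields
\[
 A_j=\frac{1}{2}\Bigl(1+d_i^{-1}\xi(2A_i-1)\Bigr)+2d_i^{-1}s_iu .
\]
The last term lies in $2\End(E)_{(2)}$, so it vanishes modulo $2$. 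Since $\xi$ and $d_i$ are odd, $d_i^{-1}\xi\equiv 1\pmod 2$, and therefore
\[
 A_j\equiv A_i+c\pmod 2,\qquad c=\frac{1-d_i^{-1}\xi}{2}.
\]
The point needing care is that $c$ is computed modulo $2$ while its definition involves a division by $2$. To handle this, I write $d_i^{-1}\xi=\xi d_i/d_i^2$. Because $d_i^2\equiv 1\pmod 8$, we get
\[
 \frac{1-\xi d_i^{-1}}{2}\equiv\frac{d_i^2-\xi d_i}{2}=d_i\,\frac{d_i-\xi}{2}\equiv\frac{\xi-d_i}{2}\pmod 2,
\]
using that $d_i$ is odd. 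This is \eqref{eq:actions-mod2}.

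\textbf{Step 2: lines.} By the preceding paragraph, $A_i$ reduces to a nontrivial rank-one idempotent on $E[2]\cong\F_2^2$. Its only stable lines are its kernel and its image, and the third line is not stable. By Step~1, the reduction of $A_j$ is either $\bar A_i$ or $\bar A_i+I$. Both operators have the same eigenlines, with the two eigenvalues interchanged in the second case. So they share exactly the two stable lines and the one common nonstable line.

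The main obstacle is the $2$-adic bookkeeping in Step~1: passing to $\End(E)_{(2)}$ and reducing modulo $2$ must be done in the localized ring, and the parity of $c$ must be computed exactly as above.
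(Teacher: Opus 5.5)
Your proof is correct and follows the paper's argument. Both of you write $A_j=\tfrac12+\tfrac{\xi}{2}s_i^{-1}+2s_i^{-1}u$ (the paper phrases this as $r_\xi+u\lambda$ with $u\lambda\in 2\End(E)_{(2)}$), discard the $u$-term modulo $2$, and read off the common stable lines of $\bar A_i$ and $\bar A_i+cI$. The only difference is cosmetic: the paper gets the parity of $c$ in one step by writing the constant as $\frac{\xi-d_i}{2}s_i^{-1}$ with $s_i^{-1}\equiv 1\pmod 2$, while your detour through $d_i^{-1}=d_i/d_i^2$ reaches the same value and needs only that $d_i$ is odd, not $d_i^2\equiv1\pmod 8$.
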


\begin{proof}
At $2$, the element $s_i$ is a unit and
\(
 \lambda=-\frac{2}{s_i}\in2\OO_{i,2}.
\)
Hence
\(
 u\lambda\equiv0\pmod2.
\)
Also,
\begin{equation}
\label{eq:r-minus-A}
 r_\xi-A_i=\frac{\xi-d_i}{2s_i}.
\end{equation}
Since
\(
 s_i\equiv1\pmod2,
\)
equations \eqref{eq:reconstruct-one} and \eqref{eq:r-minus-A} give
\eqref{eq:actions-mod2}.

Adding a scalar to an endomorphism does not change its invariant
subspaces. A nontrivial idempotent on $\F_2^2$ has two eigenspaces, its kernel and image, while the third line is not stable.
\end{proof}

Upon this, another interpretation of~$ \mathcal Z(m_x;\OO_i,\lambda,r_\xi)$ can be realized.

\begin{corollary}
\label{cor:level2}
In the setting of Proposition~\ref{prop:reconstruct}, the common cyclic
subgroup of order $2$ that is not stable under either maximal CM order
is uniquely determined by the triple $(E,\iota_i,u)$. Consequently, the
finite moduli problem of level-two CM configurations with signed
invariant $\xi$ is equivalent to
\(
 \mathcal Z(m_x;\OO_i,\lambda,r_\xi).
\)
\end{corollary}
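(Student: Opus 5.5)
The plan is to combine Proposition~\ref{prop:reconstruct} with Lemma~\ref{lem:lines}, working throughout over the category of $p$-nilpotent deformations of a supersingular curve with $p$ odd. First, given a triple $(E,\iota_i,u)\in\mathcal Z(m_x;\OO_i,\lambda,r_\xi)$, I will use Proposition~\ref{prop:reconstruct} to recover the second maximal embedding $\iota_j$, with generator $A_j=r_\xi+u\lambda$. This produces a pair $(\iota_i,\iota_j)$ with signed invariant $\frac12\Trd(s_is_j)=\xi$.

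Next, since $p$ is odd, $E[2]$ is finite \'etale of rank four. The reductions modulo $2$ of $A_i$ and $A_j$ are therefore nontrivial idempotents on a two-dimensional $\F_2$-vector space; this was shown in the paragraph preceding Lemma~\ref{lem:lines}, using optimality. By Lemma~\ref{lem:lines}, $A_j\equiv A_i+cI$, so the two idempotents have the same pair of stable lines, and the third line of $E[2]$ is stable under neither of them. I will define $C$ to be this third line. It depends only on $\iota_i$, since it is the unique line not stable under $A_i \bmod 2$, and hence it is uniquely determined by the triple. Conversely, I will check that $C$ is in fact the nonstable subgroup of each maximal order, by noting that its stabilizer is $\Z+2\OO_i$ (resp.\ $\Z+2\OO_j$), exactly as in Lemma~\ref{lem:odd-leading-line}. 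This gives the level-two structure required by Proposition~\ref{prop:unit}'s description of the Galois orbit of $\beta_i$.

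For the equivalence of moduli problems, I will define a level-two CM configuration with signed invariant $\xi$ to be a tuple $(E,C,\iota_i,\iota_j)$ such that $C$ is nonstable under both embeddings, the pair $(\iota_i,\iota_j)$ satisfies condition (a) of Proposition~\ref{prop:reconstruct}, and the compatibility of Lemma~\ref{lem:lines} holds. The forgetful map to (a) is then inverse to the map $(\iota_i,\iota_j)\mapsto(\iota_i,\iota_j,C)$ constructed above. This is because $C$ is forced, being the unique common nonstable line. Composing with Proposition~\ref{prop:reconstruct} then gives the equivalence with $\mathcal Z(m_x;\OO_i,\lambda,r_\xi)$.

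The main obstacle is showing that this works in families, not only on geometric points. Over a $p$-nilpotent base $S$, the group scheme $E[2]$ is \'etale, so subgroups of order $2$ and their stability deform uniquely: their formation is rigid in the sense of Lemma~\ref{lem:odd-leading-line}'s specialization argument. Hence the line $C$ extends uniquely over every deformation, and the identification holds as an equivalence of deformation functors. Finally, automorphisms must be matched. An automorphism of $(E,\iota_i,u)$ commutes with $A_i$, so it preserves the eigenlines and therefore also the third line $C$. Adding $C$ thus imposes no extra condition and removes no automorphisms, so the correspondence preserves automorphism groups as well.
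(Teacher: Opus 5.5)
Your proposal is correct and follows essentially the same route as the paper. Lemma~\ref{lem:lines} gives a unique common nonstable line in $E[2]$, and because $E[2]$ is finite \'etale for $p$ odd, that line lifts uniquely through every nilpotent thickening, so adding $C$ introduces no extra choice. Your explicit checks that the stabilizer is $\Z+2\OO_i$ and that automorphisms preserve $C$ are details the paper leaves implicit, and they are consistent with its argument.
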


\begin{proof}
By Lemma~\ref{lem:lines}, the two maximal CM actions have the same two
stable lines in $E[2]$ and exactly one common nonstable line. Hence this
line determines the unique cyclic subgroup of order $2$ that is not
stable under either maximal CM order.

Since $2$ is invertible in the category of $p$-nilpotent deformations,
the group scheme $E[2]$ is finite $\acute{e}$tale. Therefore, the uniquely
determined subgroup lifts uniquely through every nilpotent thickening.
Thus adding the level-two subgroup introduces no additional choice or
multiplicity, and the corollary follows.
\end{proof}

\subsection{Point counts and local factors}
\label{sec:point-count}

Keep the choice of $i$ from \eqref{eq:base-choice}, and write
\(
 d=d_i,\)
 and
 \(
 m=m_x.
\)
Take the fractional ideal $\mathfrak a=\OO_i$ in
\cite[Definition~2.1]{KY}. By \eqref{eq:partial-lambda}, we have
\begin{equation}
 a_0=N(\partial_\lambda^{-1}\mathfrak a)=\frac1{|d|},
 \qquad
 \eta=a_0m=\frac{m}{|d|}.\nonumber
\end{equation}
Note that $\eta$ is the Fourier index appearing in
\cite[Theorem~3.6]{KY}.

Let $\chi_i$ denote the quadratic character of $K_i/\Q$. For the
coherent quadratic space and Schwartz function in
\cite[Equations~(3.4)--(3.5)]{KY}, define
\begin{equation}
 e_\ell(m)
 =
 \gamma\!\left(V_\ell^{(p)}\right)^{-1}
 W_{\eta,\ell}^{*}
 \left(0,\widetilde\varphi_\ell^{(p)}\right),\nonumber
\end{equation}
where $\gamma(V_\ell^{(p)})$ is the local Weil index. Then we show that $e_{\ell}(m)$ can be compactly expressed in terms of~$\varepsilon(\ell)$.

\begin{proposition}
\label{prop:local-factors}
Assume that $p$ is the coherent support prime in
\cite[Theorem~3.6]{KY}. For any finite prime $\ell\ne p$, we have
\begin{equation}
\label{eq:finite-local-factor}
 e_\ell(m)
 =
 \sum_{a=0}^{\ord_\ell(m)}\eps(\ell)^a.
\end{equation}
At the coherent prime $p$, one has 
\(
 e_p(m)=1.
\)
\end{proposition}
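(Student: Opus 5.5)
I would prove Proposition~\ref{prop:local-factors} by evaluating the normalized local Whittaker functions of \cite{KY} prime by prime, following the case distinctions already made in Lemma~\ref{lem:prime-dict}. First I recall the local data. In \cite[Equations~(3.4)--(3.5)]{KY} the quadratic space $V_\ell^{(p)}$ at $\ell\ne p$ is $(K_{i,\ell},\,\kappa\,\Nrd)$ with $\kappa$ determined by $\lambda$ and $\mathfrak a=\OO_i$. The Schwartz function $\widetilde\varphi_\ell^{(p)}$ is the characteristic function of the coset $r_\xi+\partial^{-1}\mathfrak a$, localized at $\ell$. By \eqref{eq:partial-lambda}, $\partial_\lambda=\partial$, so at every $\ell\nmid d$ this coset is simply $\OO_{i,\ell}$. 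The quantity $\gamma(V)^{-1}W^*_{\eta,\ell}(0,\varphi)$ is then a local representation density, which I would compute with the standard Gauss-sum formula. The relevant local index is $\ord_\ell(\eta)=\ord_\ell(m)$ for $\ell\nmid d$.

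\emph{Unramified primes $\ell\nmid dp$, $\ell$ odd.} Here the lattice is $\OO_{i,\ell}$ with its norm form, and the local density of representing $m$ has a classical closed form. If $\ell$ splits in $K_i$ it equals $\ord_\ell(m)+1$. If $\ell$ is inert it equals $\tfrac12\bigl(1+(-1)^{\ord_\ell m}\bigr)$. Since Lemma~\ref{lem:prime-dict} gives $\eps(\ell)=\bigl(\tfrac{d_i}{\ell}\bigr)$, both cases are exactly $\sum_{a\le\ord_\ell m}\eps(\ell)^a$. At $\ell=2$, the prime splits in $K_i$ and $\eps(2)=1$. The coset $r_\xi+\OO_{i,2}$ is still $\OO_{i,2}$ because $2\sqrt d$ is a $2$-adic unit times $2$ and $\xi$ is odd. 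This reduces to the split computation, but I would check the normalization carefully there.

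\emph{Primes $\ell\mid d_i$.} Now $\partial^{-1}/\OO$ is nontrivial at $\ell$, and the coset $r_\xi+\partial^{-1}_\ell$ genuinely matters. I would write elements as $r_\xi+y/\sqrt d$ with $y\in\OO_{i,\ell}$. The norm condition then becomes $\Nrd(r_\xi+y/\sqrt d)\in$ a shift of $m/|d|$. Using $D=d_id_j$ and $x^2\equiv D$, the leading term is controlled by $\xi^2/(4|d|)$, and the problem reduces to representations by the unramified-type form $-d_j$-twisted norm. The count then picks up the character $\bigl(\tfrac{d_j}{\ell}\bigr)=\eps(\ell)$. The outcome should be that $\ord_\ell(m)$ replaces the naive valuation of $\eta$, and the factor is again $\sum_a\eps(\ell)^a$. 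I would carry this out with the explicit formula of Yang for ramified local Whittaker functions, as cited in \cite[Section~3]{KY}. \textbf{This is the step I expect to be the main obstacle}, because the Weil index, the shift by $r_\xi$, and the normalization of $W^*$ must all match precisely.

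\emph{The coherent prime $p$.} By assumption \eqref{eq:base-choice}, $p$ is inert in $K_i$. The space $V_p^{(p)}$ is the twisted, incoherent-swap local space, i.e., the anisotropic complement inside the definite quaternion algebra. In \cite[Theorem~3.6]{KY} the factor at $p$ is separated out as the derivative term, and the remaining normalized value at $s=0$ is $1$ by their definition of $W^*$ at $p$. I would simply verify from their normalization that the constant term equals $1$, i.e., $e_p(m)=1$. Once all three cases are established, multiplicativity then reproduces $\rho$.
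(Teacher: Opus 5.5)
\textbf{There is a genuine gap at the primes $\ell\mid d_i$.} Your handling of the primes $\ell\nmid d_ip$ (including $\ell=2$) is what the paper does: it invokes the unramified formula \cite[Lemma~5.2]{KY} together with $\chi_i(\ell)=\eps(\ell)$. At the coherent prime the paper also simply cites \cite[proof of Proposition~5.5(1)]{KY}, which gives $\mathbf{1}_{\Z_p}(\eta)=1$. Note that \cite[Theorem~3.6]{KY} uses the \emph{value} at $s=0$ of the coherent Eisenstein series obtained by switching the local space at $p$. No derivative term is separated out there.

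The primes $\ell\mid d_i$ are the only nontrivial part of the proposition, and for them you state only the expected outcome, without a computation. Your setup there is also wrong. By construction $r_\xi\in\partial^{-1}$, so $r_\xi+\partial^{-1}_\ell=\partial^{-1}_\ell$. Your parametrization $r_\xi+y/\sqrt d$ with $y\in\OO_{i,\ell}$ therefore sweeps out the whole inverse different, and all dependence on $\xi$ disappears. After the change of variables in \cite[Lemma~5.4]{KY}, the relevant Schwartz function is the characteristic function of the genuine coset $-\overline{r}_\xi+\OO_{i,\ell}$, for the form with coefficient $t=-\tfrac14$. The "$-d_j$-twisted unramified-type form" heuristic does not describe what actually happens.

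The missing ideas are as follows.
\begin{itemize}
\item First, $d_i$ is odd and fundamental and $(d_i,d_j)=1$, so $\ell\,\|\,D$. Hence $\ord_\ell(m)=1$ if $\ell\mid x$ and $\ord_\ell(m)=0$ otherwise. Only the two values $e_\ell=1$ and $e_\ell=1+\eps(\ell)$ need to be checked.
\item If $\ell\nmid x$, then $\ord_\ell(\eta)=-1$ and $r_x\notin\OO_{i,\ell}$. The key point is the identity
\[
 \eta-t\,r_x\overline{r}_x=\frac{1-d_j}{16}\in\Z_\ell .
\]
It follows from $r_x\overline{r}_x=\frac{d-x^2}{4d}$, $\eta=\frac{x^2-D}{16d}$ and $D=d_id_j$, since the $x^2$ terms cancel. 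This places you in the conductor-zero case of \cite[Lemma~5.4]{KY}, whose normalized value is $1$.
\item If $\ell\mid x$, then $r_x\in\OO_{i,\ell}$, so the coset is trivial, and $\eta\in\Z_\ell^\times$. Here \cite[Lemma~5.3]{KY} gives $e_\ell=1+\chi_i(t\eta)$, and $t\eta=\frac{m}{4d}\equiv\frac{d_j}{64}\pmod{\ell}$. The ramified character $\chi_i$ restricted to $\Z_\ell^\times$ is the Legendre symbol, so $\chi_i(t\eta)=\left(\frac{d_j}{\ell}\right)=\eps(\ell)$. The symbol $\left(\frac{d_j}{\ell}\right)$ enters only through this congruence, not through a twisted unramified norm form.
\end{itemize}
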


\begin{proof}
First suppose that $\ell\nmid d$. The local Schwartz function is the
characteristic function of $\OO_{i,\ell}$, up to multiplication of the
quadratic form by a unit. Since $|d|$ is an $\ell$-adic unit, then 
\(
 \ord_\ell(\eta)=\ord_\ell(m).
\)
By \cite[Lemma~5.2]{KY}, we have 
\[
 e_\ell(m)
 =
 \sum_{a=0}^{\ord_\ell(m)}\chi_i(\ell)^a.
\]
For $\ell\nmid d_i$, equation \eqref{eq:epsilon-prime} gives
\(
 \chi_i(\ell)=\eps(\ell).
\)
At the coherent inert prime $p$, the switched quadratic space satisfies
\(
 \gamma\!\left(V_p^{(p)}\right)^{-1}
 W_{\eta,p}^{*}
 \left(0,\widetilde\varphi_p^{(p)}\right)
 =
 \mathbf{1}_{\Z_p}(\eta)=1
\)
by \cite[proof of Proposition~5.5(1)]{KY}.

Next suppose that $\ell\mid d$ and $\ell\nmid x$. Then
\(
 \ell\nmid m,
 \,
 \ord_\ell(\eta)=-1,
 \)
 and
 \(
  r_x\notin\OO_{i,\ell}.
\)
After the change of variables in \cite[Lemma~5.4]{KY}, the quadratic
coefficient is
\(
 t=-\frac14
\)
and the coset parameter is $-\overline{r}_x$. A direct calculation gives
\begin{equation}
 \eta-t r_x\overline{r}_x
 =
 \frac{1-d_j}{16}\in\Z_\ell.\nonumber
\end{equation}
Indeed, we have
\(
 r_x\overline{r}_x=\frac{d-x^2}{4d},
 \)
 and
 \(
 \eta=\frac{x^2-D}{16d}.
\)
In \cite[Lemma~5.4]{KY}, the conductor parameter is $0$, so the
normalized value is $1$. This agrees with \eqref{eq:finite-local-factor}
since $\ord_\ell(m)=0$.

Finally, suppose that $\ell\mid d$ and $\ell\mid x$. Since the
discriminants are coprime and fundamental, then
\(
 \ord_\ell(m)=1,
 \,
 \eta\in\Z_\ell^\times,
 \)
 and
 \(
 r_x\in\OO_{i,\ell}.
\)
By \cite[Lemma~5.3]{KY}, we have
\(
 e_\ell(m)=1+\chi_i(t\eta),
 \)
 and
 \(
 t=-\frac14.
\)
Writing $d=\ell d'$ and using
\(
 m=\frac{dd_j-x^2}{16},
\)
we obtain
\(
 t\eta=\frac{m}{4d}\equiv\frac{d_j}{64}\pmod\ell.
\)
The denominator is a square modulo $\ell$, and the ramified local
character on units is the Legendre symbol. Hence
\(
 \chi_i(t\eta)
 =
 \left(\frac{d_j}{\ell}\right)
 =
 \eps(\ell).
\)
Therefore
\(
 e_\ell(m)=1+\eps(\ell),
\)
which again agrees with \eqref{eq:finite-local-factor}.
\end{proof}

\begin{corollary}
\label{cor:e2}
For the fixed integer $m=m_x$, we have
\begin{equation}
 e_2(m)
 =
 \ord_2(m)+1
 =
 \rho\bigl(2^{\ord_2(m)}\bigr).\nonumber
\end{equation}
\end{corollary}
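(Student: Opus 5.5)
This is a direct specialization of Proposition~\ref{prop:local-factors} to $\ell=2$, combined with the formula for $\rho$ at prime powers from Lemma~\ref{lem:prime-power}.

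First we check that $\ell=2$ falls into the first case of the proof of Proposition~\ref{prop:local-factors}. The coherent prime $p$ is odd, so $2\ne p$. Since $d=d_i\equiv1\pmod 8$ is odd, we have $2\nmid d$. Hence $e_2(m)$ is computed by the unramified formula \eqref{eq:finite-local-factor}:
\[
 e_2(m)=\sum_{a=0}^{\ord_2(m)}\chi_i(2)^a .
\]
Because $d_i\equiv1\pmod8$, the prime $2$ splits in $K_i$, so $\chi_i(2)=\left(\frac{d_i}{2}\right)=1$. This agrees with $\eps(2)=1$ from \eqref{eq:epsilon-prime}; the proof of Lemma~\ref{lem:prime-dict} also records that $\eps(2)=1$. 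Each term of the sum is therefore $1$, and the sum equals $\ord_2(m)+1$.

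For the second equality, $2$ is admissible: $D\equiv1\pmod8$ gives $\left(\frac{D}{2}\right)=1\ne-1$. So $2^{\ord_2(m)}$ is an admissible positive integer, and the first case of \eqref{eq:rho-prime-powers} with $\eps(2)=1$ gives
\[
 \rho\bigl(2^{\ord_2(m)}\bigr)=\ord_2(m)+1 .
\]

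There is no real obstacle. The only point needing care is confirming that Proposition~\ref{prop:local-factors} applies at $\ell=2$ with no special $2$-adic treatment. Its first case uses only $\ell\nmid d$ and cites \cite[Lemma~5.2]{KY}, with the local Schwartz function at $2$ being the characteristic function of $\OO_{i,2}$. The one thing to verify is that the cited lemma carries no hidden restriction to odd $\ell$; if it did, one would instead compute the split local Whittaker integral at $2$ directly, using $\OO_{i,2}\cong\Z_2\oplus\Z_2$.
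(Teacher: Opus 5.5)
Your proof is correct and follows the paper's argument: both use the unramified formula of \cite[Lemma~5.2]{KY} at $\ell=2$ with Schwartz function $\mathbf{1}_{\OO_{i,2}}$ and $\chi_i(2)=\eps(2)=1$ from $d_i\equiv1\pmod8$, and then evaluate $\rho\bigl(2^{\ord_2(m)}\bigr)=\ord_2(m)+1$. The differences are only bookkeeping: you reach the citation through the first case of Proposition~\ref{prop:local-factors}, whereas the paper cites the lemma directly, and you use Lemma~\ref{lem:prime-power} where the paper reads the last equality off the definition \eqref{eq:rhoepsilon}.
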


\begin{proof}
Since $d_i\equiv1\pmod8$,
\(
 K_i\otimes\Q_2\cong\Q_2\oplus\Q_2
\)
and
\(
 \chi_i(2)=1.
\)
Also, $2\nmid d_i$, so the local Schwartz function is
\(
 \mathbf{1}_{\OO_{i,2}}.
\)
The unramified formula in \cite[Lemma~5.2]{KY} gives
\[
 e_2(m)
 =
 \sum_{a=0}^{\ord_2(m)}\chi_i(2)^a
 =
 \ord_2(m)+1.
\]
Finally,
\(
 \eps(2)=\left(\frac{d_i}{2}\right)=1,
\)
so the last equality follows from \eqref{eq:rhoepsilon}.
\end{proof}

In what follows, we formulate the point-count of the special endomorphism cycle $\mathcal Z(m;\OO_i,\lambda,r_\xi)$ over~$\overline{\F}_{p}$.

\begin{proposition}
\label{prop:point-count}
For an admissible positive integer $m$, define
\begin{equation}
 \Diff(m)
 =
 \left\{
 \ell:
 \eps(\ell)=-1
 \text{ and }
 \ord_\ell(m)\text{ is odd}
 \right\}. \nonumber
\end{equation}

Fix any $x\in X_D$, write $m=m_x$, and let
\(
 \xi\in\{x,-x\}.
\)
The set of primes at which the local data fail to match in the
construction of \cite{KY} is exactly $\Diff(m)$. Consequently, the
special endomorphism cycle
\(
 \mathcal Z(m;\OO_i,\lambda,r_\xi)
\)
is empty in characteristic $p$ unless
\(
 \Diff(m)=\{p\}.
\)
If
\(
 \Diff(m)=\{p\}
\)
and
\(
 m=p^en,
 \)
 with
 \(
 p\nmid n,
\)
then $e$ is odd and
\begin{equation}
\label{eq:point-count}
 \#
 \mathcal Z(m;\OO_i,\lambda,r_\xi)(\overline{\F}_p)
 =
 \rho(n).
\end{equation}
Here $\#$ denotes the number of geometric points of the coarse finite
scheme, equivalently the number of isomorphism classes.
\end{proposition}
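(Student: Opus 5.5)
We will follow the structure of \cite{KY}. There the special endomorphism cycle $\mathcal Z(m;\OO_i,\lambda,r_\xi)$ is supported in the single characteristic singled out by the incoherent collection of local quadratic spaces, and its geometric points are counted by a product of local representation densities. Our plan has three steps. First, we identify the set of primes where local data fail to match with $\Diff(m)$. Second, we use the product formula for local invariants to force $\Diff(m)=\{p\}$ and $e$ odd. Third, we evaluate the product of local factors using Proposition~\ref{prop:local-factors} and Corollary~\ref{cor:e2}.

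\emph{Step 1: the mismatch set.} For each finite prime $\ell$, we compare the local quadratic space $(K_{i,\ell},\,-a_0 N)$, shifted by the coset $r_\xi$, with the question of whether $\eta=m/|d|$ is represented locally. By \cite[Section~3]{KY}, the set of primes where the data mismatch consists of the primes at which the local Whittaker factor vanishes for the coherent space but not for the switched one. The three cases in the proof of Proposition~\ref{prop:local-factors} give this directly.
\begin{itemize}
\item If $\ell\nmid d$, the local factor is $\sum_{a=0}^{\ord_\ell(m)}\chi_i(\ell)^a$. It vanishes exactly when $\eps(\ell)=-1$ and $\ord_\ell(m)$ is odd.
\item If $\ell\mid d$ and $\ell\nmid x$, the factor is $1$, so there is no obstruction, and indeed $\ord_\ell(m)=0$.
\item If $\ell\mid d$ and $\ell\mid x$, the factor is $1+\eps(\ell)$ with $\ord_\ell(m)=1$. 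It vanishes exactly when $\eps(\ell)=-1$.
\end{itemize}
At $\ell=2$, Corollary~\ref{cor:e2} gives the factor $\ord_2(m)+1\neq0$. In every case the mismatch primes are precisely those in $\Diff(m)$.

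\emph{Step 2: support and parity of $e$.} The cycle can have an $\overline{\F}_p$-point only when the local data match away from $p$ and fail at $p$. This is the defining property of the coherent support prime in \cite[Theorem~3.6]{KY}, and it forces $\Diff(m)=\{p\}$. We can check this consistently with Proposition~\ref{prop:genus-sign}. Since $\eps(m)=-1$, the number of primes $\ell$ with $\eps(\ell)=-1$ and $\ord_\ell(m)$ odd is odd, so $\Diff(m)$ is never empty. If $\Diff(m)=\{p\}$, then $p\in\Diff(m)$ means $\eps(p)=-1$ and $e=\ord_p(m)$ is odd. This is compatible with the choice \eqref{eq:base-choice} of $i$.

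\emph{Step 3: the point count.} By \cite[Theorem~3.6]{KY}, the number of isomorphism classes equals the product over $\ell\neq p$ of the normalized local factors $e_\ell(m)$, multiplied by $e_p(m)=1$. We may need to account for the normalization by automorphisms and the class number. We expect this to be the main obstacle: we must check that the weights $1/\#\Aut$ and the factor $h_i/w_i$ in \cite{KY} cancel against the orbit count under the class group action, so that the coarse count has no extra constant. We would verify this by showing that the CM class group acts simply transitively on each fibre, using Proposition~\ref{prop:reconstruct} to transport automorphisms.

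Granting that normalization, Proposition~\ref{prop:local-factors} and Corollary~\ref{cor:e2} give
\[
\prod_{\ell\ne p} e_\ell(m)=\prod_{\ell\mid n}\rho\bigl(\ell^{\ord_\ell(n)}\bigr)=\rho(n),
\]
by multiplicativity of $\rho$ and \eqref{eq:rho-prime-powers}. This is \eqref{eq:point-count}.
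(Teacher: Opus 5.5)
Your Steps 1 and 2 follow the paper. The difference set is read off from the three cases of Proposition~\ref{prop:local-factors}, together with the nonvanishing factor at $2$. Then \cite[Corollary~3.7]{KY} forces the support to be the unique prime of $\Diff(m)$, so $\eps(p)=-1$ and $e$ is odd.

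The gap is in Step 3, where you ``grant the normalization.'' That normalization is exactly what separates ``the count is proportional to $\rho(n)$'' from ``the count equals $\rho(n)$.'' The argument is incomplete without it, and the route you sketch would not supply it. As the paper uses it, \cite[Theorem~3.6]{KY} does not say the count equals $\prod_{\ell\ne p}e_\ell(m)$. It also has no class-number factor $h_i/w_i$ waiting to be cancelled by a class-group orbit count, because the Eisenstein coefficient already includes the sum over all $(E,\iota_i)$ in the genus. Since the number of special endomorphisms of degree $m$ varies from class to class, a simply transitive class-group action on the base points cannot by itself fix the constant. What the theorem gives is
\[
 \#\mathcal Z(m;\OO_i,\lambda,r_\xi)(\overline{\F}_p)
 =\frac{w_i}{4}\,E_\eta^{*}(\tau,0;\widetilde\varphi^{(p)})\,q^{-\eta}.
\]

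To evaluate the right-hand side you need three inputs that your product leaves out:
\begin{enumerate}
\item the archimedean Whittaker value $\gamma(V_\infty^{(p)})^{-1}W_{\eta,\infty}^{*}(\tau,0,\widetilde\varphi_\infty^{(p)})q^{-\eta}=2$ from \cite[Proposition~2.6(i)]{KRY};
\item the product formula $\prod_{\ell\le\infty}\gamma(V_\ell^{(p)})=1$, which lets you replace the local Whittaker functions by the normalized factors $e_\ell(m)$;
\item $w_i=|\OO_i^\times|=2$, which holds because $d_i\equiv1\pmod 8$ excludes $d_i=-3,-4$.
\end{enumerate}
Together with $e_p(m)=1$, these give the coefficient $2\prod_{\ell\ne p}e_\ell(m)$ and the constant $\frac{w_i}{4}\cdot2=1$, hence the count $\rho(n)$. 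The archimedean factor matters: using the constant $w_i/4$ with only the finite factors would give $\rho(n)/2$.
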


\begin{proof}
By \cite[Corollary~3.7]{KY}, a nonempty cycle is supported at the
unique prime in the corresponding difference set. The local
calculations in Proposition~\ref{prop:local-factors}, applied before
the coherent switch at $p$, show that the local Whittaker coefficient
vanishes exactly when
\(
 \eps(\ell)=-1
 \)
 and
 \(
 \ord_\ell(m)\text{ is odd}.
\)
Thus, the difference set is $\Diff(m)$.

Assume that
\(
 \Diff(m)=\{p\}.
\)
The coherent switch gives
\(
 e_p(m)=1.
\)
The archimedean calculation in
\cite[Proposition~2.6(i)]{KRY} gives
\(
 \gamma(V_\infty^{(p)})^{-1}
 W_{\eta,\infty}^{*}
 (\tau,0,\widetilde\varphi_\infty^{(p)})
 q^{-\eta}
 =
 2.
\)
The product formula for the local Weil indices, used in
\cite[proof of Proposition~3.5]{KY}, gives
\(
 \prod_{\ell\leq\infty}\gamma(V_\ell^{(p)})=1.
\)
Therefore,
\begin{equation}
 E_\eta^{*}(\tau,0;\widetilde\varphi^{(p)})q^{-\eta}
 =
 2\prod_{\ell<\infty}e_\ell(m)
 =
 2\prod_{\ell\ne p}e_\ell(m).\nonumber
\end{equation}

By \cite[Theorem~3.6]{KY}, the point count is $w_i/4$ times this
coefficient. Since
\(
 w_i=|\OO_i^\times|=2,
\)
the factor
\(
 \frac{w_i}{4}\cdot2
\)
is equal to $1$. Hence, we have
\[
\begin{aligned}
 \#
 \mathcal Z(m;\OO_i,\lambda,r_\xi)(\overline{\F}_p)
 &=
 \prod_{\ell\ne p}
 \sum_{a=0}^{\ord_\ell(m)}\eps(\ell)^a=
 \prod_\ell
 \rho\bigl(\ell^{\ord_\ell(n)}\bigr)=
 \rho(n).
\end{aligned}
\]\end{proof}

\begin{remark}
\label{rem:diff-cardinality}
Proposition~\ref{prop:genus-sign} implies that
$\Diff(m_x)$ has odd cardinality. Thus, the nonzero case of
Proposition~\ref{prop:point-count} is exactly the case in which the
difference set is a singleton. If the difference set has at least
three elements, the special endomorphism cycle is empty and the
corresponding divisor sums vanish.
\end{remark}

\section{Deformation lengths and lifting levels}
\label{sec:deformation-normalization}

Let $\mathcal Y_0(2)$ denote the Deligne--Mumford stack over
$\Z[1/2]$ classifying pairs $(E,C)$, where $E$ is an elliptic curve and
$C\subset E[2]$ is cyclic of order $2$. Let $Y_0(2)$ denote its coarse
moduli scheme.

\begin{definition}
\label{def:lifting-normalization}
Fix an odd prime $p$ and an $x\in X_D$. Suppose that there is an $i\in\{1,2\}$ such that
\(
 p\nmid d_i,
 \)
 and
 \(
 \left(\frac{d_i}{p}\right)=-1,
\)
and let $j\ne i$. Thus,
\(
 \OO_{i,p}/\Z_p
\)
is an unramified quadratic extension.

Write 
\(
 W=W(\overline{\F}_p)
\)
for the ring of Witt vectors of~$\overline{\F}_p$, 
and fix one of the two embeddings
\(
 \tau_i:\OO_{i,p}\hookrightarrow W.
\)
This fixes the CM type of the first CM action. Since
$\Z_p\to W$ is faithfully flat and unramified, then 
\[
 \length_W(M\otimes_{\Z_p}W)
 =
 \length_{\Z_p}(M)
\]
for any finite-length $\Z_p$-module $M$. Thus, fixing this CM type
introduces no numerical factor.

For $\xi\in\{x,-x\}$ and $n\ge1$, let
$\mathcal I_{p,n}^{\xi}(i)$ be the finite moduli problem of geometric
tuples
\(
 (E,C,\iota_i,\iota_j)
\)
in characteristic $p$ satisfying:
\begin{enumerate}[label=(\alph*)]
\item $E$ is an elliptic curve, $C\subset E[2]$ is cyclic of order $2$,
and $C$ is not stable under either maximal CM order;

\item $\iota_k:\OO_k\hookrightarrow\End(E)$ is an optimal embedding
for $k=i,j$, with the fixed CM type for $\iota_i$;

\item if
\(
 s_k=\iota_k(\sqrt{d_k}),
\)
then
\(
 \frac12\Trd(s_is_j)=\xi;
\)

\item the full tuple extends over the finite infinitesimal base
\(
 \Spec(W/(p^n)).
\)
\end{enumerate}
Morphisms are required to preserve all the displayed data.

For a finite moduli problem $\mathcal M$, define its degree by
\[
 \mdeg(\mathcal M)
 =
 \sum_{[z]\in\pi_0(\mathcal M)}
 \frac{1}{|\Aut(z)|}.
\]
Thus, $\mdeg(\mathcal M)$ is the usual degree with automorphisms
counted by their natural stack-theoretic weights. Set
\begin{equation}
\label{eq:L-normalization}
 L_{p,n}(x)
 =
 \mdeg\bigl(\mathcal I_{p,n}^{x}(i)\bigr)
 +
 \mdeg\bigl(\mathcal I_{p,n}^{-x}(i)\bigr).
\end{equation}
If no $i$ satisfies the displayed conditions, both finite moduli
problems are taken to be empty. The four-orientation comparison in the
proof of Lemma~\ref{lem:stack-normalization} shows that
\eqref{eq:L-normalization} is independent of the permissible choice of
$i$.
\end{definition}

Assume first that
\(
 \Diff(m_x)=\{p\}
\)
and write
\(
 m_x=p^en_x
 \)
 with
 \(
 p\nmid n_x.
\)
By assumption, $p$ is inert in the chosen base field $K_i$, and $e$
is odd.

\begin{proposition}
\label{prop:local-ring}
Let
\(
 \mathcal Z_W
 =
 \mathcal Z(m_x;\OO_i,\lambda,r_\xi)
 \times_{\Spec(\OO_{i,p}),\tau_i}
 \Spec(W).
\)
For any geometric point $z$ of $\mathcal Z_W$ in characteristic $p$, we have 
\begin{equation}
 \widehat{\OO}_{\mathcal Z_W,z}
 \cong W/(p^\nu),
 \qquad
 \nu=\frac{e+1}{2}.\nonumber
\end{equation}
\end{proposition}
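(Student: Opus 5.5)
We will compute the complete local ring with Gross--Keating style deformation theory, as in \cite[Section~5--6]{KRY} and \cite{KY}. The key steps are as follows.

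\emph{Reduction to Lubin--Tate deformations.} By Serre--Tate, deformations of $(E,\iota_i)$ over Artinian $W$-algebras with residue field $\overline{\F}_p$ are deformations of the $p$-divisible group $E[p^\infty]$ with its $\OO_{i,p}$-action. Since $p$ is inert in $K_i$ and the CM type $\tau_i$ is fixed, $\OO_{i,p}=\Z_{p^2}$ and $E[p^\infty]$ is the formal $\OO_{i,p}$-module of height one over $\overline{\F}_p$. It is the Lubin--Tate group $\mathcal G$ of height one. Its deformation space with $\OO_{i,p}$-action is $\Spf W$, and its universal deformation is the canonical lift. Thus $(E,\iota_i)$ deforms uniquely to every $W/(p^n)$. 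The level structure is étale by Corollary~\ref{cor:level2}, so it adds nothing. We also use Proposition~\ref{prop:reconstruct}, which identifies $\mathcal Z_W$ with the locus to which the extra condition on $u$ deforms. Since $r_\xi+u\lambda\in\End(E)$ is equivalent to $A_j$ lifting, and $\lambda$ and $r_\xi$ are $p$-adic units or integral at $p$ (because $p\nmid d_i$), the condition at $p$ is simply that the antilinear endomorphism $u$ lifts. Hence $\widehat{\OO}_{\mathcal Z_W,z}=W/(p^{\nu})$, where $\nu$ is the largest $n$ such that $u$ lifts to an endomorphism of the canonical lift over $W/(p^n)$.

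\emph{Gross' quasi-canonical lifting computation.} Next we use $\End(\mathcal G)=\mathcal O_B$, the maximal order in the quaternion division algebra over $\Q_p$. Write $\mathcal O_B=\OO_{i,p}\oplus\OO_{i,p}\Pi$, with $\Pi^2=p$ and $\Pi a=\bar a\Pi$. By Gross's theorem \cite{Gross} (see also \cite[Lemma~5.?]{KRY}), the endomorphisms of the canonical lift modulo $p^n$ are exactly
\[
 \OO_{i,p}+p^{n-1}\mathcal O_B.
\]
The antilinear element $u$ lies in $\OO_{i,p}\Pi$. Write $u=a\Pi$ with $a\in\OO_{i,p}$. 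Then $\Nrd(u)=-\Nrd(a)\,p$ up to sign, so $\ord_p \Nrd(u)=2\ord_p(a)+1=e$. This again shows that $e$ is odd, and gives $\ord_p(a)=(e-1)/2$. Therefore $u\in p^{n-1}\mathcal O_B$ if and only if $n-1\le (e-1)/2$. It follows that the maximal lifting level is $\nu=(e+1)/2$, and the deformation functor is representable by $W/(p^\nu)$.

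\emph{Expected obstacle.} The main difficulty is justifying that the full deformation problem (a closed formal subscheme of $\Spf W$) is exactly cut out by ``$u$ lifts''. The issue is showing that the locus where $u$ lifts is the closed subscheme $\Spec W/(p^\nu)$, rather than merely having the correct set of $W/(p^n)$-points. For this we rely on rigidity: endomorphisms lift uniquely if they lift at all, by Drinfeld rigidity, so the lifting locus is a closed formal subscheme of $\Spf W$, hence of the form $\Spf W/(p^\nu)$. The value of $\nu$ is then fixed by the $W/(p^n)$-points computed above. One must also check that the optimal-embedding and integrality conditions involving $\lambda$ are automatically compatible at $p$. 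This holds because $2$ and $d_i$ are $p$-adic units.
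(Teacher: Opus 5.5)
Your proposal is correct and follows the paper's argument. The paper simply cites the unramified case of \cite[Proposition~4.1]{KY}, whose content is exactly what you write out: the Serre--Tate reduction to the canonical lifting, plus Gross's description $\OO_{i,p}+p^{n-1}\OO_B$ of the endomorphisms modulo $p^n$, which gives $\nu=(\ord_p(m_x)+1)/2$.

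Two small corrections. First, closedness of the lifting locus in $\operatorname{Spf} W$ comes from the standard closedness of the locus where a quasi-endomorphism deforms (e.g.\ Rapoport--Zink); rigidity only gives uniqueness of the lift. Second, the lifting criterion should read $u\in\OO_{i,p}+p^{n-1}\OO_B$, not $u\in p^{n-1}\OO_B$; for antilinear $u$ this is equivalent to $u\in p^{n-1}\OO_{i,p}\Pi$, so your conclusion is unaffected.
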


\begin{proof}
After fixing the CM type, the unramified case of
\cite[Proposition~4.1]{KY} applies. In the notation of that result, the
relative Witt ring is $W$ and the uniformizer is $p$. Hence
\(
 \nu=\frac{\ord_p(m_x)+1}{2}.
\)
The deformation-theoretic input used there is
\cite[Proposition~4.3]{Gross}.
\end{proof}

For $\xi\in\{x,-x\}$ and $n\ge1$, let
\(
 \mathcal X_\xi^{[n]}(\overline{\F}_p)
\)
denote the finite moduli problem of geometric objects of
\(
 \mathcal Z(m_x;\OO_i,\lambda,r_\xi),
\)
whose completed local rings have $W$-length at least $n$.
Equivalently, these are the objects whose full signed CM and level
structure extends over
\(
 \Spec(W/(p^n)).
\)
Write
\(
 X_\xi^{[n]}(\overline{\F}_p)
 =
 \pi_0\left(
 \mathcal X_\xi^{[n]}(\overline{\F}_p)
 \right)
\)
for the set of isomorphism classes.

\begin{lemma}
\label{lem:stack-normalization}
For any $n\ge1$, we have 
\begin{equation}
\label{eq:normalization-explicit}
 L_{p,n}(x)
 =
 \#X_x^{[n]}(\overline{\F}_p).
\end{equation}
Therefore, if
\(
 \Diff(m_x)=\{p\},
\)
then
\begin{equation}
\label{eq:L-by-point-count}
 L_{p,n}(x)
 =
 \begin{cases}
 \#\mathcal Z(m_x;\OO_i,\lambda,r_x)(\overline{\F}_p),
   &1\le n\le(e+1)/2,\\[4pt]
 0,&n>(e+1)/2.
 \end{cases}
\end{equation}
\end{lemma}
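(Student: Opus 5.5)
We first reduce $L_{p,n}(x)$ to the special endomorphism cycles, then account for automorphisms, and finally read off the lifting levels from the local ring.

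\emph{Step 1: translation to special cycles.} By Proposition~\ref{prop:reconstruct} and Corollary~\ref{cor:level2}, for each sign $\xi\in\{x,-x\}$ the moduli problem $\mathcal I_{p,n}^{\xi}(i)$ is equivalent to the objects of $\mathcal Z(m_x;\OO_i,\lambda,r_\xi)$ that extend over $\Spec(W/(p^n))$, with the CM type of $\iota_i$ fixed. These equivalences preserve automorphisms. Moreover, the level subgroup $C$ is forced and lifts uniquely, because $E[2]$ is finite étale. Hence
\[
 \mdeg\bigl(\mathcal I_{p,n}^{\xi}(i)\bigr)
 =
 \sum_{[z]\in X_\xi^{[n]}(\overline{\F}_p)}\frac{1}{|\Aut(z)|}.
\]

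\emph{Step 2: automorphisms and the two orientations.} An automorphism of $(E,\iota_i,\iota_j)$ commutes with both $s_i$ and $s_j$. Since $K_i\ne K_j$ generate the quaternion algebra, such an automorphism lies in the center, hence in $\{\pm1\}$, and both signs act trivially on all the data. So every object has $|\Aut(z)|=2$.

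Next we exhibit a bijection $X_x^{[n]}\leftrightarrow X_{-x}^{[n]}$ preserving lifting levels. The natural candidate is $\iota_j\mapsto\overline{\iota_j}$, i.e.\ $s_j\mapsto -s_j$. This sends $\frac12\Trd(s_is_j)$ to $-\xi$ and $u=(s_is_j-\xi)/4$ to $(-s_is_j+\xi)/4=-u$, so integrality is preserved. It also keeps the CM type of $\iota_i$ and the common nonstable line of Lemma~\ref{lem:lines}, and it preserves deformation lengths, since conjugating an action commutes with deformation. The two contributions in \eqref{eq:L-normalization} are therefore each $\frac12\#X_x^{[n]}$, and they sum to \eqref{eq:normalization-explicit}.

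The same map is the input for independence of $i$. One swaps the roles of $i$ and $j$ and compares the four orientations $(\pm s_i,\pm s_j)$, using the fixed CM type to kill the overall sign on $s_i$.

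\emph{Step 3: lifting levels.} Assume $\Diff(m_x)=\{p\}$. By Proposition~\ref{prop:local-ring}, every geometric point of $\mathcal Z_W$ has complete local ring $W/(p^\nu)$ with $\nu=(e+1)/2$. An object therefore extends over $W/(p^n)$ exactly when $n\le\nu$. Consequently $X_x^{[n]}$ is the full point set when $1\le n\le\nu$ and is empty when $n>\nu$.

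By Proposition~\ref{prop:point-count} the full point set has cardinality $\#\mathcal Z(m_x;\OO_i,\lambda,r_x)(\overline{\F}_p)$. Base change along the fixed $\tau_i$ introduces no factor, by the length remark in Definition~\ref{def:lifting-normalization}. This gives \eqref{eq:L-by-point-count}.

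\emph{Main obstacle.} The delicate point is Step 2, which requires that the stack weights be exactly $1/2$ and the orientation swap be exactly a bijection. One must check that no object acquires extra automorphisms from $\OO_i^\times$: we have $w_i=2$, and extra units would have to commute with $s_j$. One must also check that the swap respects the "extends over $W/(p^n)$" condition uniformly, rather than only at the level of geometric points. I would handle the second issue by noting that $s_j\mapsto -s_j$ is an isomorphism of the deformation functors, so it induces an isomorphism of complete local rings.
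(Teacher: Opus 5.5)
Your proposal is correct and follows the paper's proof essentially step for step. You translate $\mathcal I_{p,n}^{\pm x}(i)$ into the signed cycles via Proposition~\ref{prop:reconstruct} and Corollary~\ref{cor:level2}, show that every object has automorphism group $\{\pm1\}$, match the $x$ and $-x$ contributions by conjugating $\iota_j$ (so that $u\mapsto -u$), and read off the lifting levels from Proposition~\ref{prop:local-ring}. The only minor difference is how you obtain $\Aut(z)=\{\pm1\}$: you note that $s_i$ and $s_j$ generate the quaternion algebra, so automorphisms are central. The paper instead uses only the centralizer of $K_i$, together with optimality and $\OO_i^\times=\{\pm1\}$. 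Both arguments are valid.
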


\begin{proof}
By Definition~\ref{def:lifting-normalization},
Proposition~\ref{prop:reconstruct}, and Corollary~\ref{cor:level2}, we
have
\[
\begin{aligned}
 L_{p,n}(x)
 &=
 \sum_{[z]\in X_x^{[n]}(\overline{\F}_p)}
 \frac{1}{|\Aut(z)|}+
 \sum_{[z]\in X_{-x}^{[n]}(\overline{\F}_p)}
 \frac{1}{|\Aut(z)|}.
\end{aligned}
\]
Since $d_i\equiv1\pmod8$, we have
\(
 d_i\ne-3,-4,
 \)
 and
 \(
 \OO_i^\times=\{\pm1\}.
\)
An automorphism of a tuple must commute with the fixed $\OO_i$-action.
The centralizer of $K_i$ in the supersingular quaternion algebra is
$K_i$ itself, and optimality gives
\(
 K_i\cap\End(E)=\OO_i.
\)
Thus
\(
 \Aut(z)=\OO_i^\times=\{\pm1\}.
\)
Therefore
\(
 L_{p,n}(x)
 =
 \frac12\#X_x^{[n]}(\overline{\F}_p)
 +
 \frac12\#X_{-x}^{[n]}(\overline{\F}_p).
\)

Conjugating the second CM embedding sends
\[
 s_j\to-s_j,
 \qquad
 x\to-x,
 \qquad
 u\to-u.
\]
Equivalently, one has that
\(
 (E,\iota_i,u)\to(E,\iota_i,-u)
\)
gives an isomorphism
$$
 \mathcal Z(m_x;\OO_i,\lambda,r_x)
 \cong
 \mathcal Z(m_x;\OO_i,\lambda,r_{-x}).
$$
Indeed, equation \eqref{eq:reconstruct-two} changes
\(
 A_j=\frac{1+s_j}{2}
\)
into
\(
 1-A_j=\frac{1-s_j}{2}.
\)
This isomorphism preserves completed local rings and hence, all
extension conditions over the bases $\Spec(W/(p^n))$. Thus, the two
signed cycles have the same number of points at every lifting level.
It follows that
\(
 L_{p,n}(x)
 =
 \#X_x^{[n]}(\overline{\F}_p).
\)

We also verify that this paired degree does not depend on the permissible
structural field. Suppose that both $K_1$ and $K_2$ are unramified and
inert at $p$. A coarse simultaneous configuration has four oriented
versions
\(
 (\iota_1,\iota_2),\,
 (\overline{\iota}_1,\iota_2),\,
 (\iota_1,\overline{\iota}_2),\,
 (\overline{\iota}_1,\overline{\iota}_2).
\)
Conjugating exactly one action changes the signed trace from $\xi$ to
$-\xi$, whereas conjugating both actions leaves it unchanged. These
conjugation involutions identify the corresponding deformation problems.
When the structural action is conjugated, the identification is accompanied
by the corresponding unramified automorphism of $W$, so it preserves
$W$-length and every lifting condition over $W/(p^n)$. Fixing the
structural orientation of either field therefore leaves one object of sign
$x$ and one object of sign $-x$, each with weight $1/2$. Hence, the sum in
\eqref{eq:L-normalization} is independent of the permissible choice of
$i$.

Finally, Corollary~\ref{cor:level2} yields a unique common subgroup of
order $2$ that is not stable under either maximal CM order. Hence, no
 further level multiplicity occurs. Proposition~\ref{prop:local-ring}
now gives \eqref{eq:L-by-point-count}.
\end{proof}

\begin{proposition}
\label{prop:Lpn}
For any odd prime $p$ and every $x\in X_D$, we have 
\[
 \sum_{n\ge1}L_{p,n}(x)
 =
 \sum_{r\ge1}\rho(m_x/p^r).
\]
\end{proposition}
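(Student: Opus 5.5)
We split according to the difference set $\Diff(m_x)$. By Remark~\ref{rem:diff-cardinality}, $\Diff(m_x)$ has odd cardinality, so there are two cases: either $\Diff(m_x)=\{p\}$, or $\Diff(m_x)\ne\{p\}$. In the second case $\Diff(m_x)$ is either a singleton $\{q\}$ with $q\ne p$, or has at least three elements. In each case we compute both sides of the identity separately and compare them.

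\emph{Case $\Diff(m_x)=\{p\}$.} Write $m_x=p^en$ with $p\nmid n$, so that $e$ is odd and $\eps(p)=-1$. Since $\Diff(m_x)=\{p\}$, every prime $\ell\ne p$ with $\eps(\ell)=-1$ divides $n$ to an even power. Hence, by Lemma~\ref{lem:prime-power}, every local factor $\rho(\ell^{\ord_\ell(n)})$ is nonnegative, and $\eps(n)=1$. Now Lemma~\ref{lem:stack-normalization} combined with Proposition~\ref{prop:point-count} gives $L_{p,n'}(x)=\rho(n)$ for $1\le n'\le (e+1)/2$ and $L_{p,n'}(x)=0$ beyond that range. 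Summing over $n'$ gives
\[
 \sum_{n'\ge1}L_{p,n'}(x)=\frac{e+1}{2}\,\rho(n).
\]
On the right-hand side, $\rho(m_x/p^r)=0$ for $r>e$, since $m_x/p^r$ is then not an integer. For $1\le r\le e$, formula \eqref{eq:rho-remove-prime} gives $\rho(m_x/p^r)=\rho(n)$ when $e-r$ is even and $0$ otherwise. Because $e$ is odd, the values of $r$ in $\{1,\dots,e\}$ with $e-r$ even are exactly the odd ones, and there are $(e+1)/2$ of them. So the right-hand side also equals $\frac{e+1}{2}\rho(n)$.

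\emph{Case $\Diff(m_x)\ne\{p\}$.} We show that both sides vanish.

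\textbf{Left side.} If some index $i$ satisfying \eqref{eq:base-choice} exists, then Proposition~\ref{prop:point-count} shows that $\mathcal Z(m_x;\OO_i,\lambda,r_{\pm x})$ has no $\overline{\F}_p$-points. Proposition~\ref{prop:reconstruct} and Corollary~\ref{cor:level2} then identify $\mathcal I^{\pm x}_{p,n}(i)$ with an empty problem, so every $L_{p,n}(x)=0$. If no such $i$ exists, the finite moduli problems are empty by the convention in Definition~\ref{def:lifting-normalization}.

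\textbf{Right side.} Choose $q\in\Diff(m_x)$ with $q\ne p$; this is possible because $\Diff(m_x)$ is nonempty and not equal to $\{p\}$. Dividing $m_x$ by $p^r$ does not change $\ord_q$, so $q$ still divides $m_x/p^r$ to an odd power and has $\eps(q)=-1$. By \eqref{eq:rho-prime-powers} the local factor at $q$ vanishes, and multiplicativity gives $\rho(m_x/p^r)=0$ for every $r\ge1$.

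One subtlety has to be checked: Proposition~\ref{prop:point-count} assumes that $p$ is inert in the chosen field $K_i$. When an index $i$ exists, this holds by \eqref{eq:base-choice}. When none exists, the left side is zero by convention, and the argument for the right side did not use $i$ at all. Modulo this check, the main content lies in the case $\Diff(m_x)=\{p\}$, where the parity bookkeeping must match the lifting length $\nu=(e+1)/2$ from Proposition~\ref{prop:local-ring} with the count of odd $r\le e$.
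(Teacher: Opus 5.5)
Your proposal is correct and follows the paper's proof essentially step for step. You use the same case split on whether $\Diff(m_x)=\{p\}$, the same combination of Lemma~\ref{lem:stack-normalization} and Proposition~\ref{prop:point-count} to get $\frac{e+1}{2}\rho(n)$ on the left, the same count of odd $r\le e$ via \eqref{eq:rho-remove-prime}, and the same auxiliary prime $q\in\Diff(m_x)\setminus\{p\}$ to make the right side vanish in the second case. The differences are cosmetic: you get $\eps(n)=1$ directly from $\Diff(m_x)=\{p\}$ rather than from Proposition~\ref{prop:genus-sign}, and in the second case you should also note, as the paper does, that $\rho(m_x/p^r)=0$ by convention when $p^r\nmid m_x$.
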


\begin{proof}
Suppose first that
\(
 \Diff(m_x)=\{p\}
\)
and write
\(
 m_x=p^{e_x}n_x
 \)
 with
 \(
 p\nmid n_x.
\)
By Proposition~\ref{prop:point-count},
Proposition~\ref{prop:local-ring}, and
Lemma~\ref{lem:stack-normalization}, every lifting level
\(
 1\le n\le\frac{e_x+1}{2}
\)
has contribution $\rho(n_x)$, and every later level has contribution
zero. Hence, one has that 
\[
 \sum_{n\ge1}L_{p,n}(x)
 =
 \frac{e_x+1}{2}\rho(n_x).
\]

By Proposition~\ref{prop:genus-sign},
\(
 \eps(m_x)=-1.
\)
Since $\eps(p)=-1$ and $e_x$ is odd, we have
\(
 \eps(n_x)=1.
\)
Lemma~\ref{lem:prime-power} therefore gives
\[
 \rho(m_x/p^r)
 =
 \begin{cases}
  \rho(n_x),&r\in\{1,3,\ldots,e_x\},\\
  0,&r\text{ is even or }r>e_x.
 \end{cases}
\]
There are exactly $(e_x+1)/2$ odd integers between $1$ and $e_x$.
In consequence,
\[
 \sum_{n\ge1}L_{p,n}(x)
 =
 \sum_{r=1}^{e_x}\rho(m_x/p^r)
 =
 \sum_{r\ge1}\rho(m_x/p^r).
\]

Now suppose that
\(
 \Diff(m_x)\ne\{p\}.
\)
By \cite[Corollary~3.7]{KY}, the signed cycles are empty in
characteristic $p$, so
\(
 L_{p,n}(x)=0
\)
for any $n$. Proposition~\ref{prop:genus-sign} implies that
$\Diff(m_x)$ has odd, and hence, positive, cardinality. Since it is not
$\{p\}$, it contains a prime $q\ne p$. For any $r\ge1$, either
$p^r\nmid m_x$, in which case $\rho(m_x/p^r)=0$ by convention, or the
exponent of $q$ remains odd in $m_x/p^r$. In the latter case,
Lemma~\ref{lem:prime-power} gives
\(
 \rho(m_x/p^r)=0.
\)
Thus, both sides of the asserted identity vanish.
\end{proof}


\section{Arithmetic intersections and lifting-order decompositions}
\label{sec:arithmetic-intersections}

In this section, we prove Theorem~\ref{thm:norm-formula} by establishing the following identity
\begin{equation}
\label{eq:global-comparisoninitial}
 \ord_p
 \left|
 \Norm_{\Q(\beta_1,\beta_2)/\Q}(\beta_1-\beta_2)
 \right|
 =
 \sum_{x\in X_D}\sum_{n\ge1}L_{p,n}(x) \nonumber
\end{equation}
for any odd prime $p$.

The proof needs three ingredients: a resultant expression for the field
norm, an integral intersection on $Y_0(2)$, and a decomposition of that
intersection according to the lifting order of supersingular
configurations. We start with the following three technical lemmas.

\begin{lemma}
\label{lem:local2}
Let
\(
 M\cong\Z_2^2,
\)
let $C\subset M/2M$ be a line, and let
\(
 A_i\in\End_{\Z_2}(M)
\)
satisfy
\(
 A_i^2-A_i+\frac{1-d_i}{4}I=0.
\)
Assume that $C$ is not stable under either $A_i$. Write 
\(
 s_i=2A_i-1,
 \,
 t=\tr(A_1A_2),
 \)
 and
 \(
 x=2t-1.
\)
Then
\begin{equation}
\label{eq:u-integral}
 u=\frac{s_1s_2-x}{4}\in\End_{\Z_2}(M),
\end{equation}
and $\tr(u)=0$ and 
 $\det(u)=\frac{D-x^2}{16}.$
In particular, we have 
\(
 x^2\equiv D\pmod{16}.
\)
\end{lemma}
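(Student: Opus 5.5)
The plan is to reduce everything modulo $2$, where the hypotheses pin down the reductions of $A_1,A_2$ almost completely. Then we verify the integrality of $u$ by a direct congruence and compute its trace and determinant from the characteristic polynomials of $s_1,s_2$ and $s_1s_2$.

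\emph{Step 1 (reduction mod $2$).} Since $d_i\equiv1\pmod 8$, the constant $\frac{1-d_i}{4}$ is even. Hence $\overline{A}_i\in\End(M/2M)\cong M_2(\F_2)$ satisfies $\overline{A}_i^2=\overline{A}_i$. The matrix $\overline{A}_i$ is not $0$ or $I$, because a scalar would stabilize every line, in particular $C$. So $\overline{A}_i$ is a rank-one idempotent. As in the proof of Lemma~\ref{lem:lines}, its kernel and image are its only two stable lines among the three lines of $\F_2^2$. Since $C$ is stable under neither $A_1$ nor $A_2$, both $\{\ker\overline{A}_1,\operatorname{im}\overline{A}_1\}$ and $\{\ker\overline{A}_2,\operatorname{im}\overline{A}_2\}$ equal the set of the two lines different from $C$. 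A rank-one idempotent on $\F_2^2$ is determined by its ordered pair (kernel, image). Therefore, writing $e=\overline{A}_1$, we get either $\overline{A}_2=e$ or $\overline{A}_2=I-e$. (Non-scalarity of $A_i$ also shows that $X^2-X+\frac{1-d_i}{4}$ is the characteristic polynomial of $A_i$. Hence $\tr(A_i)=1$ and $\det(A_i)=\frac{1-d_i}{4}$.)

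\emph{Step 2 (integrality of $u$).} Expand $s_1s_2-x=4A_1A_2-2(A_1+A_2)+2I-2tI$. It suffices to show that
\[
A_1+A_2+(t-1)I\equiv0\pmod{2}.
\]
If $\overline{A}_2=e$, then $A_1+A_2\equiv 2e\equiv0$ and $t\equiv\tr(e^2)=\tr(e)=1$, so the congruence holds. If $\overline{A}_2=I-e$, then $A_1+A_2\equiv I$ and $t\equiv\tr(e-e^2)=0$, so the congruence again holds. This gives \eqref{eq:u-integral}. This case analysis is the only step needing care; the main risk is overlooking the possibility $\overline{A}_2=I-\overline{A}_1$, and it is handled as above.

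\emph{Step 3 (trace and determinant).} From Step 1, $\tr(s_i)=0$ and $s_i^2=d_i$, so $\det(s_i)=-d_i$. Next, $\tr(s_1s_2)=4t-2\tr(A_1)-2\tr(A_2)+2=4t-2=2x$, which gives $\tr(u)=0$. Also $\det(s_1s_2)=d_1d_2=D$. So the characteristic polynomial of $s_1s_2$ is $X^2-2xX+D$, and hence
\[
\det(s_1s_2-xI)=x^2-2x\cdot x+D=D-x^2.
\]
Dividing by $4^2$ gives $\det(u)=\frac{D-x^2}{16}$. Finally, $u$ is integral by Step 2, so $\det(u)\in\Z_2$. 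Since $D-x^2$ is a rational integer, this forces $x^2\equiv D\pmod{16}$.
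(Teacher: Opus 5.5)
Your proof is correct and is essentially the paper's argument in coordinate-free form. The paper picks a basis with $C=\langle e_1\rangle$, shows that each $\overline{A}_i$ has the form $\begin{pmatrix}a&0\\1&1+a\end{pmatrix}$ with $a\in\F_2$ (parameters $a,b$ for $i=1,2$), and reads off $\overline{A}_1+\overline{A}_2=(a+b)I$ and $\tr(\overline{A}_1\overline{A}_2)=1+a+b$; this is exactly your dichotomy $\overline{A}_2\in\{e,I-e\}$, and the paper then does the same trace and determinant computation.

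One small correction: in the lemma, $x=2\tr(A_1A_2)-1$ is a priori only in $\Z_2$, not a rational integer. The congruence $x^2\equiv D\pmod{16}$ still follows directly from $\det(u)=\frac{D-x^2}{16}\in\Z_2$.
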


\begin{proof}
Choose a basis of $M/2M$ in which
\(
 C=\langle e_1\rangle.
\)
Since $d_i\equiv1\pmod8$, the reduction
\(
 P_i=A_i\pmod2
\)
is an idempotent. The nonstability of $C$ means that the lower-left
entry of $P_i$ is $1$. Solving $P_i^2=P_i$ gives
\(
 P^{(a)}
 =
 \begin{pmatrix}
  a&0\\
  1&1+a
 \end{pmatrix},
 \)
 with
 \(
 a\in\F_2.
\)
If
\(
 P_1=P^{(a)},
 \)
 and
 \(
 P_2=P^{(b)},
\)
then
\(
 P_1+P_2=(a+b)I,
 \)
 and
 \(
 \tr(P_1P_2)=1+a+b.
\)

The nonstability of $C$ also shows that $A_i$ is not scalar. Hence, the
quadratic polynomial in the statement is both the minimal and the
characteristic polynomial of $A_i$. In particular, we have that 
\(
 \tr(A_i)=1,
 \)
 and
 \(
 \det(A_i)=\frac{1-d_i}{4}.
\)
Therefore, we have 
\(
 \frac12\tr(s_1s_2)
 =
 2\tr(A_1A_2)-1
 =
 x.
\)

Moreover,
\begin{equation}
\label{eq:divisibility-four}
 s_1s_2-xI
 =
 2\bigl(2A_1A_2-A_1-A_2+(1-t)I\bigr).
\end{equation}
The expression in parentheses reduces modulo $2$ to
\(
 P_1+P_2+
 \bigl(1-\tr(P_1P_2)\bigr)I
 =
 0.
\)
It is therefore divisible by $2$, so \eqref{eq:divisibility-four} is
divisible by $4$. This proves \eqref{eq:u-integral}.

Now
\(
 \tr(s_1s_2)=2x,
 \)
 and
 \(
 \det(s_1s_2)=(-d_1)(-d_2)=D.
\)
For a $2\times2$ matrix $T$, one has that 
\(
 \det(T-xI)
 =
 \det(T)-x\tr(T)+x^2.
\)
Taking $T=s_1s_2$ gives
\(
 \det(s_1s_2-xI)=D-x^2.
\)
Dividing by $4^2$ proves the determinant formula, and the trace
formula follows directly from the definition of~$x$.
\end{proof}

\begin{lemma}
\label{lem:resultant}
Let
\(
 M_i=\Q(\beta_i)
\)
and let $P_i(T)\in\Z[T]$ be the monic minimal polynomial of $\beta_i$.
Then:
\begin{enumerate}[label=(\roman*)]
\item
\(
 [M_i:\Q]=h(d_i),
\)
and $K_iM_i$ is the Hilbert class field of $K_i$;

\item
\(
 M_1\cap M_2=\Q;
\)

\item
\begin{equation}
 \left|
 \Norm_{M_1M_2/\Q}(\beta_1-\beta_2)
 \right|
 =
 \left|\Res(P_1,P_2)\right|.\nonumber
\end{equation}
\end{enumerate}
\end{lemma}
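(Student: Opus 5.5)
For (i), we will work through class field theory and the modular interpretation of $\omega_2$. By Lemma~\ref{lem:odd-leading-line}, the conjugates of $\beta_i$ listed above are the values $\omega_2(E_\tau,C_\tau)$ for reduced forms $[a,b,c]$ of discriminant $d_i$ with $a$ odd. Since $2$ splits in $K_i$, every ideal class of $\OO_i$ has a representative form with odd leading coefficient, and we may normalize $b$ modulo $2a$ so that one such form is chosen per class. This gives $h(d_i)$ values. To see that these values are pairwise distinct, use the ordinary $2$-adic picture of Proposition~\ref{prop:unit} and Lemma~\ref{lem:branches}: each $j(E)$ carries two unit branches $\omega_2$, and reducing \eqref{eq:modular-equation} modulo $2$ gives $\overline j=\overline\beta^{\,2}$, so distinct $j$-values produce distinct branch values. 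Moreover, the two unit branches over a fixed $j$ correspond to the two étale-type subgroups, and only one of them has the stabilizer prescribed by the orbit. Hence the conjugates of $\beta_i$ are in bijection with the conjugates of $j(\OO_i)$, and $[M_i:\Q]=h(d_i)$. Next, $\beta_i$ is a root of \eqref{eq:omega-cubic} with $j=j(\OO_i)$, and by \eqref{eq:modular-equation} we have $j\in\Q(\beta_i)$. Degree comparison then gives $\Q(\beta_i)=\Q(j(\OO_i))$, and the theorem of Weber and Hasse identifies $K_i\Q(j(\OO_i))$ with the Hilbert class field $H_i$.

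For (ii), observe that $M_1\cap M_2$ lies in $H_1\cap H_2$. Every subfield of $H_i$ is unramified over $K_i$, so its ramification over $\Q$ is contained in that of $K_i$, that is, in the primes dividing $d_i$ (and $\infty$). A subfield $N$ of $M_1\cap M_2$ therefore has ramification supported on the primes dividing $\gcd(d_1,d_2)$, which is $1$. It is consequently unramified at every finite prime, and Minkowski's theorem forces $N=\Q$.

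For (iii), combine (ii) with the standard resultant identity. Because $M_1\cap M_2=\Q$ and the Galois closure of $M_i$ is $H_i$, which is Galois over $\Q$, the fields $M_1$ and $M_2$ are linearly disjoint. The main obstacle sits here, since trivial intersection alone does not give linear disjointness. To handle it, we will use $H_1\cap H_2$: this field is Galois over $\Q$ and has only finite ramification contained in $d_1$ and $d_2$ separately, so it equals $\Q$ by the argument in (ii). Then $[H_1H_2:\Q]=[H_1:\Q][H_2:\Q]$, which yields $[M_1M_2:\Q]=h(d_1)h(d_2)$. Consequently the embeddings of $M_1M_2$ correspond bijectively to pairs of conjugates $(\beta_1',\beta_2')$, and
\[
\Norm_{M_1M_2/\Q}(\beta_1-\beta_2)=\prod_{\beta_1',\beta_2'}(\beta_1'-\beta_2')=\Res(P_1,P_2).
\]
Taking absolute values completes the proof.
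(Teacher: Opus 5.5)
Your proof is correct in substance, and it takes a different route from the paper only in the sense that the paper gives no argument for this lemma and simply cites Roskam and Yang--Yin. You supply a self-contained argument:
\begin{itemize}
\item for (i), you reduce to $\Q(\beta_i)=\Q(j(\OO_i))$ and then apply Weber--Hasse;
\item for (ii), you use coprimality of $d_1,d_2$ together with Minkowski;
\item for (iii), you correctly note that $M_1\cap M_2=\Q$ alone does not give linear disjointness, and you obtain $[M_1M_2:\Q]=h(d_1)h(d_2)$ from the stronger fact $H_1\cap H_2=\Q$ with both $H_i/\Q$ Galois.
\end{itemize}
The citation buys brevity. Your version makes explicit that the resultant identity rests on this degree count, which the paper's later use of (iii) silently depends on.

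A few steps should be tidied.
\begin{itemize}
\item The distinctness step via $\overline j=\overline\beta^{\,2}$ does not follow as written: distinct $j$-values can be congruent modulo a prime above $2$. You do not need it. The exact relation $j=(\beta+16)^3/\beta$ already shows that equal $\omega_2$-values force equal $j$-values. Lemma~\ref{lem:odd-leading-line} shows each curve with CM by $\OO_i$ carries exactly one subgroup of order $2$ not stable under $\OO_i$. Equivalently, $\omega_2$ is a Hauptmodul for $X_0(2)$. So the detour through Proposition~\ref{prop:unit} and Lemma~\ref{lem:branches} can be dropped. A cleaner proof that $\beta_i\in\Q(j(\OO_i))$: $\operatorname{Gal}(\overline{\Q}/\Q(j))$ must fix the unique non-stable subgroup, and $\omega_2$ has rational $q$-expansion.
\item The claim that the Galois closure of $M_i$ is $H_i$ is false when the class group of $K_i$ is elementary abelian of exponent $2$. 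For example, for $d_i=-7$ one has $M_i=\Q$. Your argument only uses $M_i\subseteq H_i$ with $H_i/\Q$ Galois, so nothing breaks.
\item Every class containing a form with odd leading coefficient has nothing to do with $2$ splitting in $K_i$: every class represents odd primes.
\end{itemize}
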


\begin{proof}
     See, e.g., \cite{Roskam, YY}.
\end{proof}


\begin{lemma}
\label{lem:modular-parameter-cm-divisors}
Over $\mathbb Z[1/2]$, the function
\[
t=\omega_2
\]
identifies the compactified coarse modular curve $X_0(2)$ with
$\mathbb P^1_{\mathbb Z[1/2]}$, and identifies the noncuspidal locus with
\[
Y_0(2)\simeq \mathbb G_{m,\mathbb Z[1/2]}
=\Spec\mathbb Z[1/2][t,t^{-1}].
\]
The two cusps are given by $t=0$ and $t=\infty$.

For $i\in\{1,2\}$, let $P_i(t)$ be the polynomial whose roots are the
Galois-conjugate CM values of $t$ associated with the discriminant $d_i$.
Then the horizontal CM divisor of discriminant $d_i$ on $Y_0(2)$ is cut
out by $P_i(t)$.

Moreover, for any odd prime $p$, the CM divisors have no support at the
cusps after base change to $\mathbb Z_p$. As a result, their local
intersection is represented by
\(
\Spec\left(
{\mathbb Z_p[t,t^{-1}]}/{(P_1(t),P_2(t))}
\right).
\)
\end{lemma}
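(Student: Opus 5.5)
The plan is to use the classical description of $X_0(2)$ as a genus-zero curve with Hauptmodul $\omega_2$, and then to read off the CM divisors and their intersection directly in the coordinate $t$.

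\emph{Step 1 (the coordinate).} Over $\Z[1/2]$ the coarse curve $X_0(2)$ is smooth and proper with geometrically connected genus-zero fibres and two cusps. It is therefore a Brauer--Severi curve, and since its cusps are rational sections it is isomorphic to $\mathbb P^1$. To show that $t=\omega_2$ realizes this isomorphism, I would use the $q$-expansions $\omega_2=2^{12}q+O(q^2)$ at $\infty$ and $\omega_2=2^{12}/q+O(1)$ at $0$ (via the Fricke involution, which sends $\omega_2\mapsto 2^{12}/\omega_2$). These show that $t$ has a simple zero at one cusp and a simple pole at the other, with leading coefficients units in $\Z[1/2]$. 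The $q$-expansion principle over $\Z[1/2]$ then gives that $t$ is a regular function on $Y_0(2)$ with no zeros there, and that it has degree one on every fibre. Hence $t:X_0(2)\to\mathbb P^1_{\Z[1/2]}$ is a finite, birational map between normal schemes, so it is an isomorphism. Removing the two cusps gives $Y_0(2)\cong\Spec\Z[1/2][t,t^{-1}]$. As a consistency check, the relation \eqref{eq:modular-equation} expresses $j$ as a regular function on $\mathbb G_m$.

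\emph{Step 2 (CM divisors).} The horizontal CM divisor of discriminant $d_i$ is the flat closure in $Y_0(2)$ of the Galois orbit of CM points $(E,C)$ with $C$ nonstable, as in Lemma~\ref{lem:odd-leading-line}. Under the coordinate of Step~1 its generic fibre is the reduced zero set of $P_i$, since the conjugates of $\beta_i$ are distinct by Lemma~\ref{lem:resultant}(i). Its closure is $V(P_i)$, because $P_i$ is monic with integral coefficients and $\Z[1/2][t]/(P_i)$ is flat. The unit $t$ introduces no further components.

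\emph{Step 3 (no cuspidal support).} For odd $p$ I need each root of $P_i$ to be a $p$-adic unit. Integrality follows because $\beta_i$ is an algebraic integer. For the unit property, I would compare with \eqref{eq:omega-cubic}: the product of its three roots is $-4096$, a $p$-adic unit, and each $\beta_i$ is one of these roots with $j$ integral, so every root is integral and hence a unit. Equivalently, $P_i(0)$ divides a power of $2$. Consequently $V(P_i)\subset\mathbb G_m$ is closed in $\mathbb A^1_{\Z_p}$ and misses both $t=0$ and $t=\infty$. The local intersection is then $\Spec\Z_p[t,t^{-1}]/(P_1,P_2)$, which equals $\Spec\Z_p[t]/(P_1,P_2)$.

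\emph{Main obstacle.} The hardest point is Step~1's integrality claim, namely that $t$ is an isomorphism over all of $\Z[1/2]$ and not just generically. I would settle this by checking fibrewise degree one using the explicit $q$-expansion at $\infty$, whose unit leading coefficient forces a simple zero in every characteristic $p\ne2$, and then applying Zariski's main theorem.
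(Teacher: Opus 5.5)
Your proposal is correct and follows essentially the same route as the paper: $\omega_2$ as a Hauptmodul with a simple zero and a simple pole at the two cusps (via the $q$-expansion and the Fricke involution), degree one on every fibre plus normality to get $X_0(2)\cong\mathbb P^1_{\Z[1/2]}$, the CM divisor cut out by $P_i$, and the relation $jt=(t+16)^3$ to show that the CM values are $p$-adic units for odd $p$. The only difference is cosmetic: you get the unit property from the product of the roots of the cubic being $-4096$, which gives integrality at the same stroke, whereas the paper compares valuations on the two sides of $jt=(t+16)^3$.
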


\begin{proof}
The function $\omega_2$ is a principal modulus for $X_0(2)$ over
$\mathbb Q$. At the cusp $\infty$ one has
\[
\omega_2=2^{12}q+O(q^2),
\]
so $\omega_2$ is a uniformizer there up to a unit in
$\mathbb Z[1/2]$. The Fricke relation
\[
\omega_2\left(-\frac{1}{2z}\right)
=
\frac{2^{12}}{\omega_2(z)}
\]
shows that $\omega_2^{-1}$ is a uniformizer, again up to a unit, at the
other cusp. Thus $\omega_2$ is a modular unit on $Y_0(2)$ whose divisor on
$X_0(2)$ is the difference of the two cusp sections.

Since $\omega_2$ has degree one on every geometric fiber, it induces a
finite birational morphism
\[
X_0(2)\longrightarrow \mathbb P^1_{\mathbb Z[1/2]}.
\]
Both schemes are normal, so this morphism is an isomorphism. Removing the
two cusp sections gives
\[
Y_0(2)\simeq
\Spec\mathbb Z[1/2][t,t^{-1}].
\]

By construction, the roots of $P_i(t)$ are precisely the CM values of
the modular function $t$ associated with $d_i$. Hence the corresponding
horizontal CM divisor is cut out by $P_i(t)$.

It remains to check that, at an odd prime $p$, these divisors do not meet
the cusps. The CM values are algebraic integers and are $p$-adic units.
Equivalently, one may use the relation
\[
j\,t=(t+16)^3.
\]
The CM value of $j$ is integral. If $\ord_p(t)>0$, then the left-hand side
would have positive valuation, whereas $t+16$ is a $p$-adic unit because
$p$ is odd, so the right-hand side would have valuation zero. This is
impossible. Thus $\ord_p(t)=0$, and the CM points lie on the
$\mathbb G_m$-chart.
Therefore, the local intersection of the two CM divisors is represented by
\(
\Spec\left(
{\mathbb Z_p[t,t^{-1}]}/{(P_1(t),P_2(t))}
\right).
\)
\end{proof}

Upon the preceding lemma, one can interpret the $p$-adic order of $|\Res(P_1,P_2)|$ as an intersection length over~$\Z_p$.

\begin{proposition}
\label{lem:integral-intersection}
Let $p$ be an odd prime.
Then we have
\begin{equation}
    \label{eq:intersection-resultant}
\ord_p\bigl|\Res(P_1,P_2)\bigr|
=
\length_{\mathbb Z_p}
\left(
{\mathbb Z_p[t,t^{-1}]}/{(P_1(t),P_2(t))}
\right).    
\end{equation}

\end{proposition}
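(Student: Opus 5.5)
We reduce the statement to a standard fact about resultants of monic polynomials over a discrete valuation ring, using Lemma~\ref{lem:modular-parameter-cm-divisors} to remove the localization at $t$.

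\textbf{Step 1: remove the localization at $t$.} By Lemma~\ref{lem:modular-parameter-cm-divisors}, every root of $P_1$ and of $P_2$ in $\overline{\Q}_p$ is a $p$-adic unit. Equivalently, the constant terms $P_i(0)$ lie in $\Z_p^\times$, since each is $\pm$ the product of the roots. Hence $t$ is already invertible in $\Z_p[t]/(P_1)$: it divides $P_1(0)$ there, because $P_1(t)-P_1(0)$ is a multiple of $t$. So we have
\[
\Z_p[t,t^{-1}]/(P_1,P_2)=\Z_p[t]/(P_1,P_2).
\]

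\textbf{Step 2: identify the module.} Since $P_1$ is monic of degree $h_1=h(d_1)$, the ring $A=\Z_p[t]/(P_1)$ is free over $\Z_p$ of rank $h_1$. Then $\Z_p[t]/(P_1,P_2)=A/P_2(t)A$ is the cokernel of the $\Z_p$-linear endomorphism ``multiplication by $P_2(t)$'' on $A\cong\Z_p^{h_1}$.

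\textbf{Step 3: compute the length.} For an endomorphism $\phi$ of $\Z_p^{h}$ with nonzero determinant, Smith normal form gives
\[
\length_{\Z_p}(\operatorname{coker}\phi)=\ord_p\det\phi .
\]
We need $\det\phi\ne0$. This holds because $P_1$ and $P_2$ have no common root: the two CM values lie in the fields $M_1,M_2$ of Lemma~\ref{lem:resultant}, and $M_1\cap M_2=\Q$. Alternatively, the $j$-invariants of the two CM points differ since $d_1\ne d_2$.

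\textbf{Step 4: the determinant is the resultant.} Over a splitting field, the determinant of multiplication by $P_2(t)$ on $A\otimes\overline{\Q}_p$ is
\[
\prod_{P_1(\alpha)=0}P_2(\alpha)=\Norm_{A\otimes\Q_p/\Q_p}\bigl(P_2(t)\bigr)=\pm\Res(P_1,P_2),
\]
by the standard formula for the resultant of monic polynomials. Taking $\ord_p$ and combining with Step 3 gives \eqref{eq:intersection-resultant}.

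\textbf{Main obstacle.} The only step needing care is Step 1, that localizing at $t$ does not change the quotient. This rests entirely on the unit property of CM values at odd $p$, which is already proved in Lemma~\ref{lem:modular-parameter-cm-divisors}. Without it, the quotient $\Z_p[t,t^{-1}]/(P_1,P_2)$ could lose the components supported at $t=0$, and the length would no longer equal $\ord_p$ of the resultant. Everything else is standard commutative algebra.
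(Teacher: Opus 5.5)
Your proof is correct and is essentially the paper's argument. You realize $\Z_p[t]/(P_1,P_2)$ as the cokernel of multiplication by $P_2$ on the free module $\Z_p[t]/(P_1)$, identify its determinant with $\pm\Res(P_1,P_2)$, apply elementary divisors, and use $P_1(0)\in\Z_p^\times$ to see that inverting $t$ changes nothing.

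One small caution about why $\det\phi\ne0$. The fact $M_1\cap M_2=\Q$ does not by itself exclude a common root: a common root would force $P_1=P_2$, making $M_2$ only a conjugate of $M_1$, and distinct conjugate fields can intersect in $\Q$. Rely instead on your second justification: a common root $\alpha$ would give the same value $j=(\alpha+16)^3/\alpha$ for curves with CM by $\OO_1$ and by $\OO_2$, which is impossible since $K_1\ne K_2$.
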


\begin{proof}
Note by the irreducibility of $P_1$ and $P_2$, and the proof of Lemma~\ref{lem:modular-parameter-cm-divisors}, one has that $\Res(P_1,P_2)\ne0$, and $P_{i}(0)\in\Z_p^{\times}$.
Denote
\(
A=\mathbb Z_p[t]/(P_1(t)).
\)
Since $P_1$ is monic, $A$ is finite free over $\mathbb Z_p$ of rank
$\deg P_1$. Multiplication by $P_2(t)$ defines a $\mathbb Z_p$-linear
endomorphism
\(
m_{P_2}:A\longrightarrow A.
\)
The fact of $\Res(P_1,P_2)\ne 0$ implies that $m_{P_2}$ is invertible
after tensoring with $\mathbb Q_p$. Its determinant is, up to sign,
the resultant:
\(
\det(m_{P_2})=\pm\Res(P_1,P_2).
\)
Moreover, one has 
\(
{\rm coker}(m_{P_2})
\simeq
{\mathbb Z_p[t]}/{(P_1(t),P_2(t))}.
\)

By the elementary divisor theorem over the discrete valuation ring
$\mathbb Z_p$, the valuation of the determinant of an endomorphism that
is invertible over $\mathbb Q_p$ equals the length of its cokernel.
Therefore, we have
\(
\ord_p\bigl|\Res(P_1,P_2)\bigr|
=
\length_{\mathbb Z_p}
\left({\mathbb Z_p[t]}/{(P_1,P_2)}\right).
\)

It remains to compare the polynomial and Laurent polynomial quotients.
Since $P_1(0)$ is a $p$-adic unit, the image of $t$ is invertible in $A$.
Indeed, writing
\[
P_1(t)=t^n+a_{n-1}t^{n-1}+\cdots+a_1t+a_0,
\]
with $a_0=P_1(0)\in\mathbb Z_p^\times$, the relation $P_1(t)=0$ in $A$
gives
\(
t\bigl(t^{n-1}+a_{n-1}t^{n-2}+\cdots+a_1\bigr)=-a_0.
\)
Since $-a_0$ is a unit, this shows that $t$ is a unit in $A$. Hence
localizing at $t$ does not change the quotient:
\[
{\mathbb Z_p[t]}/{(P_1,P_2)}
\cong
{\mathbb Z_p[t,t^{-1}]}/{(P_1,P_2)}.
\]
The desired equality follows.
\end{proof}


\begin{remark}
The use of $\mathbb Z_p[t,t^{-1}]$ reflects the modular interpretation:
the CM divisors lie on the noncuspidal curve
\[
Y_0(2)=\Spec\mathbb Z[1/2][t,t^{-1}],
\]
where $t=\omega_2$ is a modular unit. Thus the intersection is naturally
computed away from the cusp $t=0$. 
\end{remark}

To compute the intersection lengths, we first introduce the following arithmetic formula that can be viewed as the level-two analogue of Gross and Zagier's \cite[Proposition~2.3]{GZ}.

\begin{lemma}
\label{lem:formal-diagonal}
Let $A$ be a complete discrete valuation ring of mixed characteristic
$(0,p)$, where $p$ is odd, with algebraically closed residue field,
uniformizer $\pi$, and normalized valuation $v_A$. Let
\(
 y_k=(E_k,C_k)\in\mathcal Y_0(2)(A),
 \,
 t_k=\omega_2(y_k)
 \)
 for 
 \(
 k=1,2.
\)
Assume that the generic fibers of $y_1$ and $y_2$ are not isomorphic
and have automorphism group $\{\pm1\}$. Then
\begin{equation}
\label{eq:formal-diagonal-level2}
 v_A(t_1-t_2)
 =
 \frac12\sum_{r\ge1}
 \#\operatorname{Isom}_{A/(\pi^r)}
 \bigl((E_1,C_1),(E_2,C_2)\bigr).
\end{equation}

More generally, let $W$ be a complete unramified discrete valuation
ring with algebraically closed residue field, and let $D_1,D_2$ be
finite horizontal divisors on $Y_0(2)_W$ with disjoint generic fibers.
Write $I_W(D_1,D_2)$ for their total scheme-theoretic intersection
length over $W$.
Choose a finite extension $A/W$ of ramification degree
\(
 e_A=[A:W]
\)
over which all generic branches indexed by~$a$ and their CM actions are respectively represented by
sections $y_{k,a}$. Then one has that 
\begin{equation}
\label{eq:formal-diagonal-divisors}
 I_W(D_1,D_2)
 =
 \frac1{e_A}\sum_{a,b}v_A(t_{1,a}-t_{2,b})
 =
 \frac1{2e_A}
 \sum_{a,b}\sum_{r\ge1}
 \#\operatorname{Isom}_{A/(\pi^r)}
 (y_{1,a},y_{2,b}).
\end{equation}
Branches indexed by~$a,\,b$ are counted with their generic multiplicities. The expression
is independent of the chosen splitting extension $A$.
\end{lemma}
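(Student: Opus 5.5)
This is the level-two analogue of Gross--Zagier's \cite[Proposition~2.3]{GZ}, and I will prove it by the same route: a deformation-theoretic interpretation of the diagonal of $Y_0(2)$ in terms of formal groups, using that level two is étale because $p$ is odd.

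\textbf{Step 1 (étale descent to $\mathcal Y_0(2)$).} Over $\Z_p$, $p$ odd, the forgetful map $\mathcal Y_0(2)\to\mathcal M_{1,1}$ is finite étale, since $E[2]$ is finite étale and a subgroup $C$ lifts uniquely through nilpotent thickenings (as in Corollary~\ref{cor:level2}). By Lemma~\ref{lem:modular-parameter-cm-divisors}, $t=\omega_2$ is a coordinate on the coarse space $Y_0(2)_{\Z_p}\cong\mathbb G_m$. Hence the diagonal computation transfers verbatim from the $j$-line to the $t$-line. The only change is that automorphism groups are now those of the pairs $(E,C)$; generically these are $\{\pm1\}$, and $-1$ fixes $C$.

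\textbf{Step 2 (isomorphism counts and the sections $y_1,y_2$).} Let $Z_r=\operatorname{Isom}_{A/(\pi^r)}(y_1,y_2)$. This is the set of $A/(\pi^r)$-points of the finite scheme $\operatorname{Isom}(y_1,y_2)$ over $A$, whose generic fiber is empty. Let $N$ be the largest $r$ with $y_1\equiv y_2$ in $\mathcal Y_0(2)(A/\pi^r)$. The strict henselization of the stack at the closed point is the universal deformation space $\operatorname{Spf}W[[T]]$, divided by $\Aut(y_1\otimes k)$. Each of the $|Z_r|$ isomorphisms identifies the two $A/\pi^r$-points of $W[[T]]$. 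Comparing the two sections $T(y_1)$ and $T(y_2)$ in the universal deformation then gives
\[
\sum_r \#Z_r=\sum_{\gamma\in\Aut(y_1\otimes k)} v_A\bigl(T(y_1)-\gamma T(y_2)\bigr),
\]
exactly as in Gross--Zagier.

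\textbf{Step 3 (descent to the coarse coordinate $t$).} The coarse coordinate near the point is the norm $\prod_{\gamma\in\Aut/\{\pm1\}}(T-\gamma T_0)$ times a unit. Since $\pm1$ acts trivially on $W[[T]]$, each factor in Step~2 appears twice. This gives $v_A(t_1-t_2)=\frac12\sum_r\#Z_r$, which is the factor $\frac12$ in \eqref{eq:formal-diagonal-level2}. Some care is needed when $\Aut(y_1\otimes k)$ is larger than $\{\pm1\}$, which happens in residue characteristic $p$ at supersingular points. Here I will use that the invariant-theoretic quotient of $W[[T]]$ by a finite group of order prime to $p$ is generated by that norm function up to units. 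If $p\mid|\Aut|$ (possible only for $p=3$), I will instead argue with $j$ and Gross--Zagier's original statement, pulled back along the étale map.

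\textbf{Step 4 (divisor version).} Over $A$ the divisors split: $D_k\otimes A=\sum_a y_{k,a}$. Since $\Z_p$-length is additive, $I_A=e_A\,I_W$. We have $I_A(D_1,D_2)=v_A\bigl(\prod_{a,b}(t_{1,a}-t_{2,b})\bigr)$, by the resultant argument of Proposition~\ref{lem:integral-intersection} applied over $A$. Applying the pointwise formula to each pair $(a,b)$ and dividing by $e_A$ gives \eqref{eq:formal-diagonal-divisors}. Independence of $A$ follows because passing to a further extension of ramification $e'$ multiplies both $v$ and the $r$-range by $e'$: reduction mod $\pi'^{e'r'}$ corresponds to reduction mod $\pi^{r}$, so the level sets stretch uniformly.

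\textbf{Main obstacle.} The hard part is Step~3 at supersingular points with extra automorphisms, where the generic $\{\pm1\}$ normalization and the stacky quotient must be reconciled carefully. My first attempt there is the equivariant-norm argument.
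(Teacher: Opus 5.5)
Your proposal is correct and follows essentially the paper's route: étaleness of $\mathcal Y_0(2)\to\mathcal M_{\mathrm{ell}}$ for odd $p$ lets the Gross--Zagier deformation argument run on level-preserving isomorphisms, $\pm1$ gives the factor $\frac12$, and the divisor version comes from the resultant factorization over $A$ together with $\length_A(M\otimes_W A)=e_A\length_W(M)$. The one difference is that you write out the Gross--Zagier step (the count on $W[[T]]$ and the norm description of the coarse coordinate $t$), where the paper only cites it. The wild case you worry about in Step~3 never occurs: for odd $p$ one always has $|\Aut(E_0,C_0)|\in\{2,4\}$, since in characteristic $3$ at $j=0$ the order-$3$ automorphisms permute the three subgroups of order $2$ transitively. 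So the tame norm argument covers every point, which matters because your $j$-based fallback would only recover $\sum_{C'}v_A\bigl(t_1-\omega_2(E_2,C')\bigr)$ over all three level structures $C'$ on $E_2$, not the single term.
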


\begin{proof}
The corresponding formula for elliptic curves is
\cite[Proposition~2.3]{GZ}. See also \cite[Section~2]{LV} for a convenient restatement. It identifies the order of contact of two
sections of the coarse moduli line with one half of the total number
of infinitesimal isomorphisms.

Since $p\ne2$, the forgetful morphism
\(
 \mathcal Y_0(2)\longrightarrow\mathcal M_{\mathrm{ell}}
\)
is representable, finite, and $\acute{e}$tale . Thus, after an elliptic curve has
been lifted across a nilpotent thickening, each cyclic subgroup of
order~$2$ has a unique lift. Equivalently, for any $r$, the condition
\(
 g(C_1)=C_2
\)
defines an open-and-closed subscheme of the Gross--Zagier isomorphism
scheme over $A/(\pi^r)$. The diagonal of $\mathcal Y_0(2)$ is the
corresponding finite $\acute{e}$tale  pullback of the diagonal of
$\mathcal M_{\mathrm{ell}}$. Since the proof of
\cite[Proposition~2.3]{GZ} is local on this diagonal and additive over
its open-and-closed components, applying it to the level-preserving
component gives \eqref{eq:formal-diagonal-level2}.

The involution $g\to-g$ preserves $C_1$ and $C_2$ and acts freely,
which accounts for the factor $1/2$. Finally,
Lemma~\ref{lem:modular-parameter-cm-divisors} identifies the coarse curve with
$\mathbb P^1$ using the coordinate $t=\omega_2$. Thus, the local contact
order of the two level-two sections is
\(
 v_A(t_1-t_2).
\)

For the divisor statement, the resultant factorization after base
change to $A$ gives
\[
 e_A I_W(D_1,D_2)
 =
 \sum_{a,b}v_A(t_{1,a}-t_{2,b}).
\]
The factor $e_A$ is necessary when a horizontal CM branch is ramified
over $W$, since finite flat base change multiplies total intersection
length by $e_A$. Applying \eqref{eq:formal-diagonal-level2} to every
pair of branches proves \eqref{eq:formal-diagonal-divisors}. Passing
to a larger splitting field multiplies both the valuation sum and the
ramification degree by the same factor, so the expression is
independent of $A$.
\end{proof}

\begin{lemma}
\label{lem:lifting-support}
Let $p$ be an odd prime and assume that \(
 \Spec\left({\Z_p[t,t^{-1}]}/{(P_1(t),P_2(t))}\right)
\) is nonempty. Then every geometric point of
\(
 \Spec\left({\Z_p[t,t^{-1}]}/{(P_1(t),P_2(t))}\right)
\)
is supersingular. 

Moreover, on every nonempty component there is an
index $i\in\{1,2\}$ such that
\(
 p\nmid d_i,
 \)
 and
 \(
 \left(\frac{d_i}{p}\right)=-1.
\)
Fix such an index $i$, write $j\ne i$, and let
\(
 W=W(\overline{\F}_p).
\)
After the faithfully flat unramified base change
\(
 \Z_p\longrightarrow W,
\)
the CM divisor corresponding to $d_i$ splits into horizontal sections.
Fixing an embedding
\(
 \tau_i:\OO_{i,p}\hookrightarrow W
\)
selects one CM type above each such section and does not change
intersection lengths.
\end{lemma}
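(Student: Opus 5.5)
A geometric point of $\Spec\bigl(\Z_p[t,t^{-1}]/(P_1(t),P_2(t))\bigr)$ is a point $\bar y=(E,C)$ of $Y_0(2)(\overline{\F}_p)$ that is the reduction of a CM point of discriminant $d_1$ and of a CM point of discriminant $d_2$. This uses Lemma~\ref{lem:modular-parameter-cm-divisors} and the fact that the CM values are $p$-adic units. The plan is to show that $E$ carries commuting-incompatible CM actions by $\OO_1$ and $\OO_2$, deduce supersingularity, and then read off the splitting behaviour of $p$ in $K_1$ and $K_2$.

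\emph{Step 1.} Since $p$ is odd, the forgetful map $\mathcal Y_0(2)\to\mathcal M_{\mathrm{ell}}$ is finite étale. Hence the reduction of each CM pair $(E_k,C_k)$ is an elliptic curve $\overline E$ with a distinguished subgroup $\overline C$. The curves $E_k$ have potentially good reduction after a finite extension, and specialization of endomorphisms is injective. So $\overline E$ receives embeddings $\OO_1\hookrightarrow\End(\overline E)$ and $\OO_2\hookrightarrow\End(\overline E)$ after transporting one action across the isomorphism of reductions. This is the same argument as in Proposition~\ref{prop:orderat2}.

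\emph{Step 2.} Since $K_1\neq K_2$, the rational endomorphism algebra $\End^0(\overline E)$ contains two distinct quadratic fields. It therefore cannot be an imaginary quadratic field, so $\overline E$ is supersingular. This proves the first assertion.

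\emph{Step 3.} By Deuring's reduction criterion, supersingular reduction of a CM curve with CM by the maximal order $\OO_k$ forces $p$ to be non-split in $K_k$ for both $k=1,2$. So each $\left(\frac{d_k}{p}\right)\in\{0,-1\}$. Because $(d_1,d_2)=1$, the prime $p$ divides at most one of $d_1,d_2$. If $p\nmid d_1d_2$, both symbols equal $-1$ and either index works. If, say, $p\mid d_1$, then $p\nmid d_2$ and $\left(\frac{d_2}{p}\right)=-1$, so $i=2$ works. The only thing to rule out is both $K_k$ being ramified at $p$, which coprimality excludes. This holds at every point of the intersection, hence on every nonempty component.

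\emph{Step 4.} Fix such an $i$. Since $p$ is inert in $K_i$, the field $\Q(\beta_i)$ lies in the Hilbert class field $K_iM_i$ of Lemma~\ref{lem:resultant}(i). That field is unramified over $K_i$, and $K_i$ is unramified at $p$, so every prime of $K_iM_i$ above $p$ is unramified over $\Q_p$. Its completions therefore embed in $W[1/p]$. Consequently $P_i$ factors over $W$ into linear factors $t-t_{i,a}$ with $t_{i,a}\in W$, giving horizontal sections of $Y_0(2)_W$. A root $t_{i,a}$ lifts to a CM pair over $W$: the curve has good reduction after at most an unramified extension because the CM is defined over an unramified field. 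Its $\OO_i$-action acts on $\operatorname{Lie}$ through one of the two embeddings $\OO_{i,p}\hookrightarrow W$. Composing with conjugation if necessary, each section carries exactly one action with the fixed CM type $\tau_i$, so choosing $\tau_i$ picks a single action per section.

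\emph{Step 5.} Intersection lengths are unchanged under $\Z_p\to W$. This follows from the length identity $\length_W(M\otimes_{\Z_p}W)=\length_{\Z_p}(M)$ recorded in Definition~\ref{def:lifting-normalization}, since $\Z_p\to W$ is faithfully flat and unramified.

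\textbf{Main obstacle.} The delicate point is Step 4: showing that the horizontal sections are honestly defined over $W$ rather than over a ramified extension, and that the CM type is constant along each section. I would handle both using unramifiedness of the Hilbert class field at the inert prime, together with the rigidity of the Lie-algebra character on the connected base $\Spec W$.
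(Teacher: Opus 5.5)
Your proof is correct and follows the paper's route. Ordinary points are excluded because $\End^0$ of an ordinary curve cannot contain both $K_1$ and $K_2$; coprimality of $d_1,d_2$ then yields an inert index; and unramifiedness of the Hilbert class field $K_iM_i$ at $p$ gives the splitting over $W$.

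The only differences are minor. In Step~3 you cite Deuring's reduction criterion, whereas the paper proves the needed direction directly: $K_k\otimes\Q_p$ must embed in the local division algebra $B_{p,\infty}\otimes\Q_p$, which has no nontrivial idempotents. In Step~4, the good model over $W$ exists only after a possible quadratic twist; this does not move the coarse point, so your conclusion stands.
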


\begin{proof}
Suppose that a geometric point of the intersection corresponded to an
ordinary elliptic curve $E$. The two CM divisors would then give
embeddings of both $K_1$ and $K_2$ into $\End^0(E)$. The rational
endomorphism algebra of an ordinary elliptic curve is a single
imaginary quadratic field, so it cannot contain the two distinct
quadratic fields $K_1$ and $K_2$. Thus, every geometric intersection
point is supersingular.

At a supersingular point, the rational endomorphism algebra is the
quaternion algebra $B_{p,\infty}$, whose localization at $p$ is a
division algebra. In consequence, neither
\(
 K_k\otimes\Q_p
\)
can be split: a split quadratic algebra contains a nontrivial
idempotent, whereas a division algebra does not. Since
$(d_1,d_2)=1$, at most one of the two fields is ramified at $p$. Hence
at least one of them is unramified and inert at $p$, which gives the
asserted choice of $i$.

By Lemma~\ref{lem:resultant}(i), the field $K_iM_i$ is the Hilbert class
field of $K_i$. Since $K_i/\Q$ is unramified at $p$ and the Hilbert
class field is unramified over $K_i$, every localization of $M_i$ at
$p$ is unramified over $\Q_p$. Therefore the corresponding horizontal
CM divisor splits after the unramified extension $\Z_p\to W$.

The embedding $\tau_i$ specifies the action of $\OO_i$ on the Lie
algebra and hence, selects the corresponding CM type. Finally, for
every finite-length $\Z_p$-module $M$,
\(
 \length_W(M\otimes_{\Z_p}W)
 =
 \length_{\Z_p}(M),
\)
so the unramified base change preserves total intersection lengths.
\end{proof}

Combining the preceding two lemmas, one obtains a geometric formulation for the intersection lengths on the right hand side of~\eqref{eq:intersection-resultant}:

\begin{lemma}
\label{lem:intersection-stack}
Fix an odd prime $p$ and a permissible choice of $i$ as in
Lemma~\ref{lem:lifting-support}. Let $\mathcal T_p(i)$ be the finite
moduli problem of supersingular tuples
\(
 (E,C,\iota_i,\iota_j)
\)
with the fixed CM type for $\iota_i$, either CM type for the second
action, and with the two CM actions lying on the same intersection
component.

For any $z\in\mathcal T_p(i)$, let $R_z$ denote the complete
local deformation ring of the full tuple inside the intersection of
the two CM divisors. Then
\begin{equation}
\label{eq:intersection-stack-length}
 \length_{\Z_p}
 \left({\Z_p[t,t^{-1}]}/{(P_1(t),P_2(t))}\right)
 =
 \sum_{[z]\in\pi_0(\mathcal T_p(i))}
 \frac{\length_W(R_z)}{|\Aut(z)|}.
\end{equation}
\end{lemma}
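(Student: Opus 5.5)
The plan is to pass from the Laurent-polynomial quotient to a sum of local contact orders of CM sections, and then to read off the weighted count of tuples with the formal-diagonal formula of Lemma~\ref{lem:formal-diagonal}.

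\emph{Step 1: base change to $W$.} By Lemma~\ref{lem:lifting-support}, the length is unchanged by the base change $\Z_p\to W$. After this base change, the divisor $D_i$ cut out by $P_i$ splits into horizontal sections $y_{i,a}$. Choose a finite extension $A/W$ of ramification index $e_A$ over which the branches of $D_j$ are also represented by sections $y_{j,b}$. Then \eqref{eq:formal-diagonal-divisors} gives
\[
 \length_{\Z_p}\!\left({\Z_p[t,t^{-1}]}/{(P_1,P_2)}\right)
 =\frac{1}{2e_A}\sum_{a,b}\sum_{r\ge1}\#\operatorname{Isom}_{A/(\pi^r)}(y_{i,a},y_{j,b}).
\]
Every isomorphism class appearing here is supported at a supersingular point, again by Lemma~\ref{lem:lifting-support}.

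\emph{Step 2: regroup the isomorphisms as tuples.} An isomorphism $g$ over $A/(\pi^r)$ between the lifts of $(E_{i,a},C_{i,a})$ and $(E_{j,b},C_{j,b})$ transports the CM action $\iota_j$ onto $E_{i,a}$. This yields a tuple $(E,C,\iota_i,\iota_j)$ defined over $A/(\pi^r)$. The action $\iota_i$ carries the CM type fixed by $\tau_i$. The second action may carry either CM type, because the branches $b$ run over all conjugates.

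Conversely, such a tuple together with the two section structures recovers $g$. The fibers of this correspondence are torsors under $\Aut(z)=\{\pm1\}$, as computed in the proof of Lemma~\ref{lem:stack-normalization}. This converts the factor $\tfrac12$ into the weight $1/|\Aut(z)|$.

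\emph{Step 3: identify lengths.} For fixed $z\in\mathcal T_p(i)$, the number of levels $r$ to which the tuple extends over $A/(\pi^r)$ is the $A$-length of $R_z\otimes_W A$. This length equals $e_A\length_W(R_z)$, since $R_z$ is a quotient of the $W$-deformation space of the $i$-section, which is $\cong W$. The factor $e_A$ cancels the $1/e_A$, which gives \eqref{eq:intersection-stack-length}.

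\emph{Main obstacle.} The hardest step is making Step~2 rigorous. I must check that summing over pairs of branches $(a,b)$ together with isomorphisms reproduces each tuple exactly once up to $\pm1$. I also need the count of $r$ to equal the deformation length of the \emph{full tuple}, including the level structure $C$. For $C$, I would invoke Corollary~\ref{cor:level2}: it lifts uniquely, so $C$ contributes no extra thickness. For the count of $r$, I would argue as in Gross--Zagier, namely that $\operatorname{Isom}$ over $A/(\pi^r)$ is exactly the locus where the transported $\iota_j$ deforms. The ramified-branch bookkeeping should be handled by the independence of $A$ asserted at the end of Lemma~\ref{lem:formal-diagonal}.
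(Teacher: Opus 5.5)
Your outline (base change to $W$, the formal-diagonal formula \eqref{eq:formal-diagonal-divisors}, regrouping isomorphisms into tuples, and cancelling $e_A$ via $\length_A(R_z\widehat\otimes_W A)=e_A\length_W(R_z)$) is the paper's. However, Step~2 is off by a factor of $2$ as you have written it, and Step~2 is the whole content of this lemma.

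You attach one action $\iota_j$ to each branch $y_{j,b}$. You say both CM types occur because ``the branches $b$ run over all conjugates''. You then use that $g\mapsto z$ has $\{\pm1\}$-torsor fibres to turn $\tfrac12$ into $1/|\Aut(z)|$. Suppose $g\mapsto z$ were onto $\pi_0(\mathcal T_p(i))$ with fibres $\{g,-g\}$. Then, level by level, $\tfrac12\sum_{a,b}\#\operatorname{Isom}=\tfrac12\sum_z|\Aut(z)|=\#\pi_0(\mathcal T_p(i))$. That is weight $1$ per tuple, which is twice the right-hand side of \eqref{eq:intersection-stack-length}.

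The branches of $D_j$ are the $h(d_j)$ roots of $P_j$, that is, pairs $(E,C)$ with no preferred action. Putting both actions on every branch would double the left-hand side. Putting one action on each branch produces, from an orbit $\{g,-g\}$, only one of the two tuples, with second action $g^*\iota_{j,b}$ or $g^*\overline{\iota}_{j,b}$. The other tuple is then unaccounted for, so your claim that each tuple is reproduced ``exactly once up to $\pm1$'' fails.

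The paper's proof separates two independent factors of $2$. First, $g$ and $-g$ transport the second action to the same embedding, so $\tfrac12\#\operatorname{Isom}$ counts $\{g,-g\}$-orbits. Second, each orbit yields both oriented tuples, one with $\iota_{j,b}$ and one with $\overline{\iota}_{j,b}$ (signed invariants $\xi$ and $-\xi$). Each has weight $1/|\Aut(z)|=\tfrac12$, so their total $\tfrac12+\tfrac12=1$ matches the orbit's contribution. Thus the correspondence between isomorphisms and oriented tuples is two-to-two, not two-to-one. The automorphism weight is cancelled by the orientation doubling, not by the ambiguity $g\sim-g$ alone.

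If you prefer to keep one action per branch, you must show that the image is exactly one orientation class. You must then invoke the length-preserving involution $\iota_j\mapsto\overline{\iota}_j$ (the symmetry $x\mapsto-x$ in Lemma~\ref{lem:stack-normalization}) to supply the other half. With this repair, your Steps~1 and~3 go through as in the paper.
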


\begin{proof}
Choose a finite totally ramified extension $A/W$, over which both
horizontal CM divisors, together with their CM actions, split into
sections. Write 
\(
 e_A=[A:W].
\)
By Lemma~\ref{lem:formal-diagonal},
\[
\length_{\Z_p}
 \left({\Z_p[t,t^{-1}]}/{(P_1(t),P_2(t))}\right)
 =
 \frac1{2e_A}
 \sum_{a,b}\sum_{r\ge1}
 \#\operatorname{Isom}_{A/(\pi^r)}
 (y_{1,a},y_{2,b}),
\]
where the sum ranges over the branches of the two divisors.

Grouping the level-preserving isomorphisms according to the tuple that
they induce gives the contribution of each object
$z\in\mathcal T_p(i)$. The factor $1/e_A$ descends the branchwise
$\pi$-adic lengths to $W$-lengths. Indeed, if the component attached
to $z$ becomes
\(
 R_z\widehat\otimes_W A,
\)
then
\(
 \length_A(R_z\widehat\otimes_W A)
 =
 e_A\length_W(R_z).
\)

It remains to account for the factor $1/2$ and the two CM types. The
involution
\(
 g\to-g
\)
acts freely on every level-preserving isomorphism set. The two
isomorphisms that transport a fixed second CM action to the same
embedding differ precisely by $\pm1$. Thus, the factor $1/2$ converts
isomorphisms into transported CM embeddings.

For a fixed CM type of the first action, a coarse second CM branch has
two possible CM actions, namely $\iota_j$ and
$\overline{\iota}_j$. Each oriented object has automorphism group
\(
 \Aut(z)=\OO_i^\times=\{\pm1\}.
\)
Indeed, an automorphism preserving the fixed $\OO_i$-action lies in
the centralizer of $K_i$, which is $K_i$, and optimality implies that
its integral elements are exactly $\OO_i$. Thus, the two CM types have
total degree
\(
 \frac12+\frac12=1,
\)
which agrees with the contribution of one $\{g,-g\}$-orbit. This proves
\eqref{eq:intersection-stack-length}.

The same argument shows that the formula is independent of the
splitting extension $A$.
\end{proof}

As we shall see in what follows, this will pass the computation to the countings of the geometric objects of~$\mathcal Z(m_x;\OO_i,\lambda,r_x)(\overline{\F}_p)$.

\begin{lemma}
\label{lem:signed-invariant}
Let
\(
 (E,C,\iota_1,\iota_2)
\)
be a simultaneous supersingular tuple occurring in $\mathcal T_p(i)$.
Write 
\(
 s_k=\iota_k(\sqrt{d_k}),
 \)
 and
 \(
 \xi=\frac12\Trd(s_1s_2).
\)
Then $\xi$ is an integer satisfying
\(
 0<|\xi|<\sqrt D,
 \)
 and
 \(
 \xi^2\equiv D\pmod{16}.
\)
In consequence, we have 
\(
 |\xi|\in X_D.
\)
Moreover, if
\(
 u=\frac{s_1s_2-\xi}{4},
\)
then $u\in\End(E)$ and
\(
 \Nrd(u)=\frac{D-\xi^2}{16}.
\)

Conversely, the tuples occurring in $\mathcal T_p(i)$ are precisely
those reconstructed from the special endomorphism cycles with signed
invariants $\xi=x$ and $\xi=-x$, for $x\in X_D$.
\end{lemma}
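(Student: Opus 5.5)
The plan is to reduce the statement to the local $2$-adic computation of Lemma~\ref{lem:local2}, combined with the quaternionic bounds available at a supersingular point. The converse direction will then be read off from Proposition~\ref{prop:reconstruct} and Corollary~\ref{cor:level2}.

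\textbf{Integrality and the congruence.} Since $p$ is odd, the $2$-adic Tate module $T_2(E)\cong\Z_2^2$ is available. $\End(E)\otimes\Z_2$ acts faithfully on it, and $\End(E)$ is exactly the set of elements of $\End(E)\otimes\Q$ that preserve $T_2(E)$ at $2$ and are integral at every other prime. Set $A_k=\iota_k\bigl(\tfrac{1+\sqrt{d_k}}{2}\bigr)$. After adjusting the sign convention so that $A_k$ satisfies $A_k^2-A_k+\frac{1-d_k}{4}=0$, the hypothesis that $C$ is stable under neither maximal order is exactly the hypothesis of Lemma~\ref{lem:local2}, with $M=T_2(E)$ and $C\subset M/2M=E[2]$.

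That lemma then yields the following facts.
\begin{itemize}
\item[(1)] $x=2\tr(A_1A_2)-1$ equals $\tfrac12\Trd(s_1s_2)=\xi$. It is an integer, since $\Trd(A_1A_2)\in\Z$.
\item[(2)] $u=(s_1s_2-\xi)/4$ preserves $T_2(E)$.
\item[(3)] $\xi^2\equiv D\pmod{16}$.
\end{itemize}
At primes $\ell\ne2$, $u$ is integral because $s_1s_2-\xi\in\End(E)$ and $4$ is a unit. Hence $u\in\End(E)$. The computation $\Nrd(u)=(D-\xi^2)/16$ is identical to the determinant calculation in Lemma~\ref{lem:local2}, or equivalently to the one in Proposition~\ref{prop:reconstruct}.

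\textbf{The bound $0<|\xi|<\sqrt D$.} The reduced norm on the definite quaternion algebra $B_{p,\infty}$ is positive definite, and $u\ne0$. Indeed, if $u=0$ then $s_1s_2=\xi$ is a scalar, so $s_2\in\Q s_1$ and $K_1=K_2$, which contradicts $d_1\ne d_2$. Hence $\Nrd(u)>0$, giving $\xi^2<D$. Since $D\equiv1\pmod 8$, the congruence $\xi^2\equiv D\pmod{16}$ forces $\xi$ to be odd, so $\xi\ne0$. Therefore $|\xi|\in X_D$, and $\Nrd(u)=m_{|\xi|}$.

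\textbf{Converse.} Fix $x\in X_D$ and $\xi\in\{x,-x\}$, and start from a point of $\mathcal Z(m_x;\OO_i,\lambda,r_\xi)$. Proposition~\ref{prop:reconstruct} produces $\iota_j$ with $\frac12\Trd(s_is_j)=\xi$. Corollary~\ref{cor:level2} produces the unique common nonstable subgroup $C$, which gives a tuple $(E,C,\iota_i,\iota_j)$. This tuple lies on both CM divisors: by Lemma~\ref{lem:odd-leading-line}, nonstability of $C$ under a maximal order characterizes the Galois orbit of $\beta_k$, so the point lies in the intersection and hence in $\mathcal T_p(i)$. Conversely, the first part shows that every tuple of $\mathcal T_p(i)$, whether or not one first conjugates $\iota_i$ to match the fixed CM type, has an invariant $\xi$ with $|\xi|\in X_D$ and $u$ integral. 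Proposition~\ref{prop:reconstruct} then places it in the cycle with that signed invariant, and the two constructions are mutually inverse.

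\textbf{Main obstacle.} The delicate point is the bookkeeping of orientations. The roles of indices $1,2$ versus $i,j$ must be matched, and $s_1s_2$ must be compared with $s_is_j$: when $i=2$ these differ by $s_2s_1=2\xi-s_1s_2$. Under this swap $\xi$ is unchanged but $u\mapsto-u$. This is harmless because $\mathcal Z$ for $r_\xi$ is stable under the identification $u\mapsto-u$ used in Lemma~\ref{lem:stack-normalization}. I would handle this case explicitly.
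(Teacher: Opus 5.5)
Your proposal follows the paper's proof essentially step for step: Lemma~\ref{lem:local2} applied to $M=T_2E$ (with the nonstability of $C$ supplied by Lemma~\ref{lem:odd-leading-line}), integrality of $u$ away from $2$, positivity of $\Nrd$ on the definite algebra to get $|\xi|<\sqrt D$, oddness of $\xi$ to get $\xi\ne0$, and Proposition~\ref{prop:reconstruct} with Corollary~\ref{cor:level2} for the converse. One correction to your last paragraph: the map $u\mapsto-u$ of Lemma~\ref{lem:stack-normalization} does not preserve $\mathcal Z(m_x;\OO_i,\lambda,r_\xi)$; it identifies it with $\mathcal Z(m_x;\OO_i,\lambda,r_{-\xi})$, since it sends $A_j$ to $1-A_j$. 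You do not need it, though: when $i=2$, simply attach the triple $(E,\iota_i,(s_is_j-\xi)/4)=(E,\iota_2,-u)$, which is integral, has the same norm, and carries the same signed invariant because $\Trd(s_2s_1)=\Trd(s_1s_2)$.
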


\begin{proof}
Set
\(
 A_k=\frac{1+s_k}{2}.
\)
Since $A_k$ is integral, then 
\[
 \xi
 =
 \frac12\Trd\bigl((2A_1-1)(2A_2-1)\bigr)
 =
 2\Trd(A_1A_2)-1
 \in\Z.
\]

The distinguished subgroup
\(
 C\subset E[2]
\)
is not stable under either maximal CM order by
Lemma~\ref{lem:odd-leading-line}. Since $p$ is odd, this property
persists on the $2$-adic Tate module $T_2E$. Applying
Lemma~\ref{lem:local2} to
\(
 M=T_2E
\)
gives
\(
 \frac{s_1s_2-\xi}{4}
 \in\End_{\Z_2}(T_2E),
\)
as well as
\(
 \Nrd(u)=\frac{D-\xi^2}{16},
 \)
 and
 \(
 \xi^2\equiv D\pmod{16}.
\)

The canonical map
\(
 \End(E)\otimes\Z_2
 \longrightarrow
 \End_{\Z_2}(T_2E)
\)
is an isomorphism. Indeed, it is injective, becomes an isomorphism
after tensoring with $\Q_2$, and both sides are maximal orders in
\[
 \End^0(E)\otimes\Q_2\simeq M_2(\Q_2).
\]
Thus, $u$ is integral at $2$. At every prime $\ell\ne2$, the denominator
$4$ is a unit, so $u$ is integral at $\ell$ as well. Intersecting the
local orders inside the rational endomorphism algebra gives
\(
 u\in\End(E).
\)

The element $u$ is nonzero, since otherwise the two quadratic
subfields generated by $s_1$ and $s_2$ would coincide. The supersingular
quaternion algebra is definite at infinity, so
\(
 \Nrd(u)>0.
\)
Therefore
\(
 |\xi|<\sqrt D.
\)
Since $D$ is odd, the congruence modulo $16$ implies that $\xi$ is odd,
and hence, nonzero. Thus, $|\xi|\in X_D$.

Conversely, Proposition~\ref{prop:reconstruct} reconstructs the second
maximal CM embedding from the special endomorphism $u$, and
Corollary~\ref{cor:level2} reconstructs the unique common cyclic
subgroup of order $2$ that is not stable under either maximal CM order.
Therefore the simultaneous tuples are exactly the tuples arising from
the two signed special endomorphism cycles.
\end{proof}

\begin{lemma}
\label{lem:lifting-length}
For any object $z$ in a signed component corresponding to
$x\in X_D$, let $R_z$ be its completed local deformation ring over
$W$. Then
\[
 \length_W(R_z)
 =
 \sum_{n\ge1}
 \mathbf{1}_{\{\length_W(R_z)\ge n\}}.
\]
Moreover, the condition
\(
 \length_W(R_z)\ge n
\)
is equivalent to the existence of an extension of the full signed
tuple over
\(
 \Spec(W/(p^n)).
\)
As an implication, we have 
\begin{equation}
\label{eq:completed-lengths-lifting-orders}
 \sum_{[z]\in\pi_0(\mathcal T_p(i))}
 \frac{\length_W(R_z)}{|\Aut(z)|}
 =
 \sum_{x\in X_D}\sum_{n\ge1}L_{p,n}(x).\nonumber
\end{equation}
\end{lemma}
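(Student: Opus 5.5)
The first identity is the layer-cake decomposition of a natural number, and I will dispose of it first. Since $R_z$ is a complete local Artinian quotient of $W$ (Proposition~\ref{prop:local-ring} gives $R_z\cong W/(p^\nu)$), its $W$-length $\nu$ is a nonnegative integer. For such an integer one has $\nu=\sum_{n\ge1}\mathbf 1_{\{\nu\ge n\}}$, so nothing beyond finiteness of the length is needed.

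For the equivalence, I will use the explicit shape $R_z\cong W/(p^\nu)$ with $\nu=(e+1)/2$. Extensions of the full signed tuple over $\Spec(W/(p^n))$ correspond to local $W$-algebra maps $R_z\to W/(p^n)$ lifting the residue map. Such a map $W/(p^\nu)\to W/(p^n)$ exists iff $p^\nu\in(p^n)$, i.e.\ iff $\nu\ge n$. The reduction to deformations of $(E,\iota_i,u)$ alone comes from two earlier results. Proposition~\ref{prop:reconstruct} identifies the full tuple's deformations with those of the special cycle. Corollary~\ref{cor:level2} shows that the level subgroup $C$ lifts uniquely, because $E[2]$ is \'etale. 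Hence $R_z$ is the completed local ring of $\mathcal Z_W$ at $z$, and the lifting condition is exactly the defining condition of $\mathcal X^{[n]}_\xi$.

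For the displayed consequence, I would substitute the first identity into the left side and interchange the finite sums over $z$ and $n$. This gives
\[
\sum_{n\ge1}\ \sum_{[z]\in\pi_0(\mathcal T_p(i)),\ \length_W(R_z)\ge n}\frac1{|\Aut(z)|}.
\]
By Lemma~\ref{lem:signed-invariant}, $\pi_0(\mathcal T_p(i))$ is the disjoint union over $x\in X_D$ and $\xi\in\{x,-x\}$ of the signed components, namely the objects reconstructed from $\mathcal Z(m_x;\OO_i,\lambda,r_\xi)$. Disjointness holds because $\xi=\frac12\Trd(s_is_j)$ is an invariant of the tuple, and $x>0$ recovers $|\xi|$. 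On the $\xi$-component, the objects with $\length_W(R_z)\ge n$ are exactly those of $\mathcal I^{\xi}_{p,n}(i)$ by the equivalence just shown. The inner sum therefore equals $\mdeg(\mathcal I^{x}_{p,n}(i))+\mdeg(\mathcal I^{-x}_{p,n}(i))=L_{p,n}(x)$ by \eqref{eq:L-normalization}. The automorphism groups agree because Proposition~\ref{prop:reconstruct} preserves automorphisms and both are $\{\pm1\}$.

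The main obstacle is matching the deformation ring $R_z$ taken \emph{inside the intersection} in Lemma~\ref{lem:intersection-stack} with the completed local ring of the special cycle $\mathcal Z_W$. One must check that deforming $(E,C,\iota_i,\iota_j)$ jointly is the same as deforming $(E,\iota_i,u)$, which Proposition~\ref{prop:reconstruct} gives over $p$-nilpotent bases. One must also check that the fixed CM type $\tau_i$ is respected on both sides, which is handled by the base change $\times_{\Spec\OO_{i,p},\tau_i}\Spec W$. Components with $\Diff(m_x)\ne\{p\}$ contribute empty sets to both sides by Proposition~\ref{prop:point-count}, so they do not obstruct the identification.
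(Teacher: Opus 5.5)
Your proposal is correct and follows the paper's proof step for step: you take $R_z\cong W/(p^{\nu})$ with $\nu=(e_x+1)/2$ from Proposition~\ref{prop:local-ring}, apply the layer-cake identity, use that a $W$-algebra map $W/(p^{\nu})\to W/(p^{n})$ exists iff $\nu\ge n$, regroup the objects of $\mathcal T_p(i)$ by the signed invariant $\xi=\pm x$ via Lemma~\ref{lem:signed-invariant}, and discard the components with $\Diff(m_x)\ne\{p\}$ as empty by Proposition~\ref{prop:point-count}. Your only addition is to make explicit, through Proposition~\ref{prop:reconstruct} and Corollary~\ref{cor:level2}, that $R_z$ is the completed local ring of $\mathcal Z_W$; the paper uses this identification without stating it.
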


\begin{proof}
If the signed special endomorphism cycle corresponding to $x$ is
nonempty in characteristic $p$, Proposition~\ref{prop:point-count}
implies that
\(
 \Diff(m_x)=\{p\}.
\)
Write
\(
 m_x=p^{e_x}n_x,
 \)
 and
 \(
 p\nmid n_x.
\)
Proposition~\ref{prop:local-ring} gives
\(
 R_z\simeq W/(p^{\nu_z}),
 \)
and
\(
 \nu_z=\frac{e_x+1}{2}.
\)
Hence, we have
\(
 \length_W(R_z)
 =
 \nu_z
 =
 \sum_{n\ge1}\mathbf{1}_{\{\nu_z\ge n\}}.
\)

The inequality
\(
 \nu_z\ge n
\)
is exactly the condition that the corresponding full tuple extends over
$\Spec(W/(p^n))$. Taking degrees and using Lemma~\ref{lem:signed-invariant} give
\[
\begin{aligned}
 \sum_{[z]\in\pi_0(\mathcal T_p(i))}
 \frac{\length_W(R_z)}{|\Aut(z)|}
 &=
 \sum_{x\in X_D}\sum_{n\ge1}
 \left(
 \mdeg\bigl(\mathcal I_{p,n}^{x}(i)\bigr)
 +
 \mdeg\bigl(\mathcal I_{p,n}^{-x}(i)\bigr)
 \right)=
 \sum_{x\in X_D}\sum_{n\ge1}L_{p,n}(x).
\end{aligned}
\]
If
\(
 \Diff(m_x)\ne\{p\},
\)
the corresponding signed cycles are empty in characteristic $p$ by
Proposition~\ref{prop:point-count}, so they contribute to neither side.
\end{proof}

\begin{lemma}
\label{lem:lifting-order-decomposition}
For any odd prime $p$, we have 
\begin{equation}
\label{eq:intersection-lifting-orders}
 \length_{\Z_p}
 \left({\Z_p[t,t^{-1}]}/{(P_1(t),P_2(t))}\right)
 =
 \sum_{x\in X_D}\sum_{n\ge1}L_{p,n}(x).
\end{equation}
\end{lemma}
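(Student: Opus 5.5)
The plan is to chain the lemmas just established; essentially no new computation is needed. There are two cases, according to whether the intersection scheme is empty.

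If $\Spec\bigl(\Z_p[t,t^{-1}]/(P_1(t),P_2(t))\bigr)$ is empty, the left side of \eqref{eq:intersection-lifting-orders} is $0$. We must then check that every $L_{p,n}(x)$ vanishes. Suppose some $L_{p,n}(x)\neq0$. Then Definition~\ref{def:lifting-normalization} produces a supersingular tuple $(E,C,\iota_i,\iota_j)$ in characteristic $p$. The level subgroup $C$ is not stable under either maximal order, so by Lemma~\ref{lem:odd-leading-line} both CM points lie on the Galois orbits cut out by $P_1$ and $P_2$. This gives a common geometric point of the two CM divisors, which is a contradiction.

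If the intersection scheme is nonempty, Lemma~\ref{lem:lifting-support} supplies a permissible index $i$ with $p\nmid d_i$ and $\left(\frac{d_i}{p}\right)=-1$. It also shows that the base change $\Z_p\to W$ preserves lengths. Fixing the CM type $\tau_i$, Lemma~\ref{lem:intersection-stack} rewrites the left side as
\[
\sum_{[z]\in\pi_0(\mathcal T_p(i))}\frac{\length_W(R_z)}{|\Aut(z)|}.
\]
Lemma~\ref{lem:signed-invariant} then partitions $\pi_0(\mathcal T_p(i))$ according to the signed invariant $\xi=\frac12\Trd(s_is_j)$. Every $\xi$ that occurs satisfies $|\xi|\in X_D$, and each $\xi=\pm x$ corresponds, via Proposition~\ref{prop:reconstruct} and Corollary~\ref{cor:level2}, to the signed cycle $\mathcal Z(m_x;\OO_i,\lambda,r_\xi)$. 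Finally, Lemma~\ref{lem:lifting-length} writes each $\length_W(R_z)$ as $\sum_{n\ge1}\mathbf 1_{\{\length_W(R_z)\ge n\}}$, identifies the condition $\length_W(R_z)\ge n$ with extendability over $W/(p^n)$, and regroups the sum as $\sum_{x\in X_D}\sum_{n\ge1}\bigl(\mdeg\mathcal I^{x}_{p,n}(i)+\mdeg\mathcal I^{-x}_{p,n}(i)\bigr)$. By \eqref{eq:L-normalization}, this is exactly $\sum_{x}\sum_{n}L_{p,n}(x)$.

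The delicate point is making sure the indexing of the sum matches on both sides, so the argument must confirm three things.
\begin{enumerate}[label=(\roman*)]
\item The second CM action in $\mathcal T_p(i)$ ranges over both CM types, which is what feeds both $\mathcal I^{x}$ and $\mathcal I^{-x}$.
\item The automorphism weights $1/|\Aut(z)|=1/2$ appear identically in $\mdeg$ and in Lemma~\ref{lem:intersection-stack}.
\item The result does not depend on the choice of $i$ when both fields are inert at $p$. This is the four-orientation argument in the proof of Lemma~\ref{lem:stack-normalization}.
\end{enumerate}

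I expect the main obstacle to be the regrouping in Lemma~\ref{lem:intersection-stack}. There one must check that, after the base change to $W$, the deformation rings $R_z$ of objects of $\mathcal T_p(i)$ are exactly the completed local rings of $\mathcal Z_W$ computed in Proposition~\ref{prop:local-ring}. The identification of the level structure is harmless here, because $E[2]$ is étale and the subgroup $C$ therefore lifts uniquely through every nilpotent thickening.
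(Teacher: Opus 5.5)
Your proposal is correct and follows the paper's proof. In the nonempty case you chain Lemma~\ref{lem:lifting-support}, Lemma~\ref{lem:intersection-stack} and Lemma~\ref{lem:lifting-length}, with Lemma~\ref{lem:signed-invariant} made explicit, exactly as the paper does. The only difference is the degenerate case: the paper splits on whether a permissible index exists and uses the empty-by-convention clause of Definition~\ref{def:lifting-normalization}, while you split on emptiness of the intersection and show directly that a nonzero $L_{p,n}(x)$ would produce a common point of the two CM divisors; this also makes explicit the subcase of a permissible index with empty intersection, which the paper absorbs into Lemma~\ref{lem:lifting-length}, though that step uses Deuring lifting of $(E,\iota_k)$ plus the unique \'etale lift of $C$, not only the specialization statement of Lemma~\ref{lem:odd-leading-line}.
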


\begin{proof}
If no permissible index exists, Lemma~\ref{lem:lifting-support} implies
that the intersection on the left of
\eqref{eq:intersection-lifting-orders} is empty. Its length is therefore
zero. By Definition~\ref{def:lifting-normalization}, the finite moduli
problems defining every $L_{p,n}(x)$ are then empty as well, so the
right-hand side is also zero.

Assume now that a permissible index exists, and choose $i$ as in
Lemma~\ref{lem:lifting-support}. By Lemma~\ref{lem:intersection-stack}, we have 
\[
 \length_{\Z_p}
 \left({\Z_p[t,t^{-1}]}/{(P_1(t),P_2(t))}\right)
 =
 \sum_{[z]\in\pi_0(\mathcal T_p(i))}
 \frac{\length_W(R_z)}{|\Aut(z)|}.
\]
Lemma~\ref{lem:lifting-length} identifies the right-hand side with
\(
 \sum_{x\in X_D}\sum_{n\ge1}L_{p,n}(x).
\)
This proves \eqref{eq:intersection-lifting-orders}.
\end{proof}

Finally, we are led up to the identity mentioned at the beginning of the current section.

\begin{proposition}
\label{prop:global-comparison}
For any odd prime $p$, we have 
\begin{equation}
\label{eq:global-comparison}
 \ord_p
 \left|
 \Norm_{\Q(\beta_1,\beta_2)/\Q}(\beta_1-\beta_2)
 \right|
 =
 \sum_{x\in X_D}\sum_{n\ge1}L_{p,n}(x).\nonumber
\end{equation}
\end{proposition}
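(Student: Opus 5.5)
The plan is to chain together the three ingredients assembled in this section. First I would use Lemma~\ref{lem:resultant}. By part~(ii), $M_1\cap M_2=\Q$, so $\Q(\beta_1,\beta_2)=M_1M_2$ has degree $h(d_1)h(d_2)$. Hence the field norm of $\beta_1-\beta_2$ equals $\prod_{a,b}(\beta_1^{(a)}-\beta_2^{(b)})$, where $\beta_k^{(a)}$ runs over the conjugates of $\beta_k$. This is the resultant $\Res(P_1,P_2)$, since both polynomials are monic. Part~(iii) records exactly this identity, giving
\[
 \ord_p\left|\Norm_{\Q(\beta_1,\beta_2)/\Q}(\beta_1-\beta_2)\right|
 =\ord_p\left|\Res(P_1,P_2)\right|.
\]
I would note that $\Res(P_1,P_2)\neq0$: the two Galois orbits are disjoint, because their $j$-values have different CM fields.

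Second, I would apply Proposition~\ref{lem:integral-intersection} at the odd prime $p$. This converts $\ord_p|\Res(P_1,P_2)|$ into
\[
 \length_{\Z_p}\left(\Z_p[t,t^{-1}]/(P_1(t),P_2(t))\right).
\]
The inputs it needs, that $P_i(0)$ is a $p$-adic unit and that the CM divisors avoid the cusps, are supplied by Lemma~\ref{lem:modular-parameter-cm-divisors}.

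Third, I would invoke Lemma~\ref{lem:lifting-order-decomposition}, which identifies this length with $\sum_{x\in X_D}\sum_{n\ge1}L_{p,n}(x)$. Concatenating the three equalities proves the proposition.

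The genuine content is all in the cited lemmas, so there is no real obstacle at this stage. The one point I would check is the case split in the last step. If no permissible index $i$ exists, then by Lemma~\ref{lem:lifting-support} the intersection scheme is empty. Its length is then zero, which forces $p\nmid\Res(P_1,P_2)$, and this is consistent with $L_{p,n}(x)=0$ by definition. If a permissible $i$ exists, the identity is the stack-theoretic count of Lemmas~\ref{lem:intersection-stack} and~\ref{lem:lifting-length}. That count is independent of the choice of $i$, by the four-orientation argument in Lemma~\ref{lem:stack-normalization}.

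Combined with Proposition~\ref{prop:Lpn}, this then gives \eqref{porderformula} for odd $p$. Proposition~\ref{prop:orderat2} covers $p=2$.
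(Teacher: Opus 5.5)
Your proposal is correct and follows the paper's own route. Lemma~\ref{lem:resultant}(iii) converts the norm into $\Res(P_1,P_2)$. Proposition~\ref{lem:integral-intersection} turns $\ord_p$ of the resultant into the length of $\Z_p[t,t^{-1}]/(P_1,P_2)$. Lemma~\ref{lem:lifting-order-decomposition} identifies that length with $\sum_{x}\sum_{n}L_{p,n}(x)$.

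One small caution: $M_1\cap M_2=\Q$ alone does not give $[M_1M_2:\Q]=h(d_1)h(d_2)$ when the fields are not Galois over $\Q$. The correct reason is that the Hilbert class fields are Galois over $\Q$ and ramified at disjoint sets of finite primes, so they intersect in $\Q$ and are linearly disjoint. Since you ultimately rely on part (iii) for the norm--resultant identity, this does not affect the argument.
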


\begin{proof}
This follows from combining Proposition~\ref{lem:integral-intersection} and Lemma~\ref{lem:lifting-order-decomposition}.
\end{proof}

\subsection{Proof of Theorem~\ref{thm:norm-formula}}
\label{sec:proof-norm-formula}

For an odd prime $p$, Propositions~\ref{prop:Lpn} and
\ref{prop:global-comparison} give
\[
 \ord_p
 \left|
 \Norm_{\Q(\beta_1,\beta_2)/\Q}(\beta_1-\beta_2)
 \right|
 =
 \sum_{x\in X_D}\sum_{r\ge1}\rho(m_x/p^r).
\]
Together with Proposition~\ref{prop:orderat2}, this shows that the two
positive rational numbers
\[
 \left|
 \Norm_{\Q(\beta_1,\beta_2)/\Q}(\beta_1-\beta_2)
 \right|
 \qquad\text{and}\qquad
 \prod_{x\in X_D}F(m_x)
\]
have the same valuation at every prime. 
This proves Theorem~\ref{thm:norm-formula}.\qed

\end{document}